\documentclass{article}
\usepackage{amsmath}
\usepackage{amssymb}

\newcommand{\C}{\mathbb{C}}
\newcommand{\Z}{\mathbb{Z}}
\newcommand{\N}{\mathbb{N}}
\newcommand{\TT}{\mathbb{T}}

\def\co{\colon}
\def\la{\langle}
\def\ra{\rangle}
\def\i{\infty}
\def\imp{\Longrightarrow}
\def\op{\oplus}

\def\ol{\overline}

\def\HH{{\cal H}}
\def\L{{\cal L}}
\def\K{{\cal K}}
\def\I{{\cal I}}
\def\G{{\cal G}}

\def\E{{\cal E}}
\def\M{{\cal M}}
\def\F{{\cal F}}

\def\X{{\cal X}}
\def\T{{\cal T}}
\def\S{{\cal S}}

\def\J{{\cal J}}

\def\R{{\cal R}}

\def\o{\omega}

\def\z{\zeta}

\def\s{\sigma}
\def\et{\eta}

\def\d{\delta}
\def\D{\Delta}

\def\m{\mu}

\def\x{\xi}

\def\1{\mathbf 1}

\def\wt{\widetilde}

\def\wh{\widehat}
\def\ol{\overline}

\def\pd{\overset{.}+}
\def\ot{\otimes}

\def\DD{{\cal D}}
\def\G{\Gamma}
\def\fa{\forall}

\newtheorem{pro}{Proposition}
\newtheorem{thm}[pro]{Theorem}
\newtheorem{lem}[pro]{Lemma}
\newtheorem{cor}[pro]{Corollary}

\newtheorem{que}[pro]{Question}

\begin{document}

\title{On hyperinvariant subspaces of operators containing unilateral shifts}

\author{L\'aszl\'o K\'erchy}

\date{\emph{Dedicated to the memory of Franciszek Hugon Szafraniec, outstanding representative of the Krakow school of operator theory}}

\maketitle

\begin{abstract}
\noindent
The hyperinvariant subspace problem for Hilbert space operators $T$ containing a unilateral shift is addressed.
The discussion is based on a similarity model of $T$, which is an operator-matrix $\wh T= [T_{i,j}]_3$ where $T_{1,1}$ is the simple unilateral shift $S$ and $T_{3,3}$ is a cyclic diagonal operator $D$. 
The existence of $D$ is established by the technique resulting almost invariant half-spaces in \cite{APTT}; see also \cite{Tc} and \cite{HP}.
For any operator $Q=[Q_{i,j}]_3$ in the commutant of $\wh T$, the entry $Q_{3,1}$ intertwines $S$ with $D$ up to a transformation of rank at most 1.
These entries form a linear manifold $\L_{3,1}$.
We focus on 3-dimensional cross-sections of $\L_{3,1}$.
These are subspaces of $3\times 3$ complex matrices, transformed into singular matrices by a canonical mapping.
If such a subspace $\L$ is not transitive, then $T$ has a nontrivial hyperinvariant subspace.
A throrough study reveals that $\L$ can be transitive only if it has a very specific basis.
Consequences of the existence of nontrivial hyperinvariant subspaces in the presence of shift-type invariant subspaces are also discussed.

\noindent
\emph{AMS Subject Classification} (2020): 47A08, 47A15, 47A20, 47L05.

\noindent
\emph{Key words}: invariant subspace, hyperinvariant subspace, unilateral shift, transitive subspace of matrices, singular matrices.
\end{abstract}

\section{Introduction}
\label{introduction}

Let $\HH$ be a (nonzero) complex Hilbert space, and let $\L(\HH)$ denote the algebra of bounded, linear operators acting on $\HH$.
The \emph{invariant subspace problem} (ISP) asks whether every $T\in \L(\HH)$ has a nontrivial invariant subspace $\M:\; \{0\}\ne\M\ne\HH,\; T\M\subset\M$.
Taking any $0\ne x\in\HH$ and considering $\M_x=\vee\{T^nx\}_{n=0}^\infty$, this question can be reduced to the separable case.
On the other hand, in finite dimension the operators have eigenvalues and eigenspaces.
Hence we may (and shall) assume that $\dim \HH= \aleph_0$.
The \emph{hyperinvariant subspace problem} (HSP) asks whether every nonscalar $T\in\L(\HH)$ has a nontrivial hyperinvariant subspace, that is a subspace which is invariant for every operator $Q$ commuting with $T:\; QT=TQ$.
The commutant of $T$ is denoted by $\{T\}'$.
These questions are arguably the most challenging open problems in operator theory.
For a thorough study of this topic we refer to the paper
\cite{DP} and the monographs \cite{RR} and \cite{CP}.

It is known that the answer for (ISP) is negative in the general complex Banach space setting; see \cite{Enflo} and \cite{Read}.
On the other hand, in particular Banach spaces the answer for (HSP) is positive; see \cite{AH}.
A recent approach to (ISP) in an arbitrary (infinite dimensional) Banach space $\X$ was initiated in \cite{APTT}, and culminated in \cite{Tc} proving that every operator $T$ on $\X$ has an \emph{almost invariant half-space}. 
We recall that $\M$ is a half-space in $\X$ if $\M$ and $\X/\M$ are infinite dimensional.
Furthermore, $\M$ is almost invariant for $T$ if $\M$ is invariant for a finite rank perturbation of $T$.
The technique of this approach was refined in the Hilbert space setting in \cite{JKP}, which led to the structure theorem in the paper \cite{HP}.
Applying this result to block-triangular operators  more delicate structure theorem was constructed in the recent paper \cite{ker25}.
Relying on it new sufficient conditions were obtained for the existence of proper hyperinvariant subspaces by studying cross-sections of the commutant.

In the present paper we concentrate on operators which have shift-type invariant subspaces.
We recall that $\HH_1$ is a \emph{shift-type invariant subspace} of the operator $T\in\L(\HH)$, if the restriction $T|_{\HH_1}$ is similar to the simple unilateral shift $S$.
The system of shift-type invariant subspaces is denoted by $\hbox{Lat}_sT$.

We address the following

\begin{que}
\label{main-question}
Is it true that if a Hilbert space operator has a shift-type invariant subspace, then it has a nontrivial hyperinvariant subspace?
\end{que}

An affirmative answer would yield positive answers for longstanding open questions on the existence of proper hyperinvariant and invariant subspaces, as it is presented in Section 2.

Our study of this problem is based on a similarity model $\wh T$ of a translate of $T$, constructed in Section 3.
Here $\wh T=[T_{i,j}]_3$ is an operator-matrix where $T_{1,1}$ is the unilateral shift $S$ and $T_{3,3}$ is a cyclic diagonal operator $D$ derived by the technique of almost invariant half-spaces.
For every operator $Q=[Q_{i,j}]_3$ in the commutant of $\wh T$, the entry $Q_{3,1}$ intertwines $S$ with $D$ up to a transformation of rank at most 1.
These entries form the linear manifold $\L_{3,1}$.
Assuming that every nonzero vector $x\in\HH_1$ is cyclic for the commutant $\{T\}'$, we may infer that $\L_{3,1}$ is transitive.

In Section 4 we turn to the study of 3-dimensional cross-sections $\L_*$ of $\L_{3,1}$, which inherit transitivity and the singularity property of $\L_{3,1}$.
Thus, if $\L_*$ is not transitive then $T$ has a nontrivial hyperinvariant subspace.
In Section 5 we proceed with a systematic investigation of subspaces $\L$ of $3 \times 3$ complex matrices, which are transformed into singular matrices by a mapping induced by a cyclic $3 \times 3$ diagonal matrix $\D$.
The dimension of a $\D$-singular subspace $\L$ which is also transitive can be 5 and 6.
In Section 6 it is shown that for 6-dimensional subspaces $\D$-singularity and transitivity are inconsistent properties.
The 5-dimensional situation is much more complicated.
In Section 7 we focus on the subspace $\L_0$ consisting of those matrices in $\L$ where the third column is zero.
The cases when $\L_0$ possesses a property, which is consistent with the $\D$-singularity of $\L$,
 are listed.
In Section 8 it is shown that if $\L$, containing such an $\L_0$, is really $\D$-singular then it cannot be transitive, with a special exceptional situation.
The $\D$-singularity of $\L$ in this exceptional case is characterized in Section 9 in terms of a canonical basis.
Then, in Section 10 it is shown that transitivity also holds when $\D$-singularity is valid in this particular situation.
Finally, in Section 11 we conclude that
 the hyperinvariant subspace problem for operators containing a unilateral shift can be reduced to the study of the case when the previous 3-dimensional cross-sections have a very special canonical basis.

As for notation, we shall frequently use the Kronecker symbol $\d_{k,l}$, which is 1 if $k=l$, and is 0 if $k\ne l$.

\section{Nontrivial hyperinvariant subspaces}
\label{hyperinvariant}

In this section we present consequences of an affirmative answer for Question \ref{main-question}.
First we consider \emph{Hilbert space contractions}; for their Sz.-Nagy--Foias theory we refer to \cite{NFBK}.

So let us assume that $T\in\L(\HH)$ and $\|T\|\le1$.
By its canonical decomposition $T$ splits into the orthogonal sum $T= U_s \op U_a \op T_c$, where $U_s$ is a singular unitary operator, $U_a$ is an absolutely continuous unitary operator and $T_c$ is a completely nonunitary contraction.
The hyperinvariant subspace lattice $\hbox{Hlat}\,T$ of $T$ also splits into the direct sum
\[\hbox{Hlat}\,T= \hbox{Hlat}\,U_s \op \hbox{Hlat}(U_a\op T_c).\]
The lattice $\hbox{Hlat }U_s$ consists of the spectral subspaces of $U_s$.
Hence it can be assumed that $U_s$ acts on the zero space, when the contraction $T$ is called \emph{absolutely continuous} (a.c.).

It is easy to see that the subspace 
\[\HH_0(T)= \left\{ h\in\HH: \lim_{n\to\i}\|T^nh\|=0\right\}\]
of stable vectors is hyperinvariant for $T$.
We say that $T$ is \emph{asymptotically nonvanishing}, if $\HH_0(T)\ne \HH$.
In view of (HSP) we may assume then that $\HH_0(T)=\{0\}$, in notation: $T\in C_{1\cdot}$.

We recall also the concept of unitary asymptote.
Given any operator $R\in\L(\K)$, the intertwining set $\I(T,R)$ stands for the system of transformations $X\in\L(\HH,\K)$ satisfying the condition $XT=RX$.
We say that $(X,U)$ is a contractive unitary intertwining pair for $T$, if $U\in\L(\K)$ is a unitary operator and $X\in\I(T,U)$ with $\|X\|\le 1$.
The contractive unitary intertwining pair $(X_T,U_T)$ is called a \emph{unitary asymptote} of $T$, if it is universal in the sense that for every contractive unitary intertwining pair $(X,U)$ of $T$ there exists a unique $Y\in \I(U_T,U)$ such that $X=YX_T$ and $\|Y\|\le 1$.
Such a pair exists and is unique up to isomorphism.
It can be characterized by the properties
\[X_T\in\I(T,U_T), \;\; \|X_Th\|=\lim_{n\to\i}\|T^nh\|\;(h\in\HH), \;\; \K_T= \vee\{U_T^{-n}X_T\HH\}_{n=1}^\i.\]
(See \cite{ker13}.)

If $T$ is an a.c.\ contraction, then $U_T$ is an a.c.\ unitary operator.
The \emph{residual set} $\omega(T)$ is the measurable support of the spectral measure $E_T$ of $U_T$.
This means that for any Borel subset $\o$ of the unit circle $\TT$, $E_T(\o)\ne0$ holds exactly when $m_\circ(\o\cap\o(T))\ne0$.
(Here $m_\circ$ stands for the normalized Lebesgue measure on $\TT$.)
It is known that if $\o(T)=\TT$, then $T$ has shift-type invariant subspaces; actually, the subspaces in $\hbox{Lat}_sT$ span the whole space $\HH$.
(See \cite{ker07} and Chapter IX in \cite{NFBK}.)

Immediate consequence of a positive answer for Question \ref{main-question} is

\begin{cor}
\label{full-residual}
If $T$ is an asymptotically nonvanishing a.c.\ contraction with a residual set $\o(T)=\TT$, then $T$ has a nontrivial hyperinvariant subspace.
\end{cor}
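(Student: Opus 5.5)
The plan is to derive the corollary from an affirmative answer to Question \ref{main-question}, using the facts recalled above. The only real issue is that the hypotheses of the Question must hold for the operator to which we apply it. Let $T$ be an asymptotically nonvanishing a.c.\ contraction with $\o(T)=\TT$. We may assume that $T$ is not a scalar operator, since otherwise every nontrivial subspace is hyperinvariant. This is possible because $\dim\HH=\aleph_0$, so nontrivial subspaces exist.

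First, by the result recalled from \cite{ker07} and Chapter IX of \cite{NFBK}, the condition $\o(T)=\TT$ yields that $\hbox{Lat}_sT\ne\emptyset$. Thus $T$ has an invariant subspace $\HH_1$ with $T|_{\HH_1}$ similar to $S$. An affirmative answer to Question \ref{main-question} then gives a nontrivial hyperinvariant subspace for $T$ directly.

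The remaining hypotheses are still useful, because they let us first reduce to a canonical situation, as the text preceding the corollary does. The subspace $\HH_0(T)$ is hyperinvariant. If $\{0\}\ne\HH_0(T)$, then, since $\HH_0(T)\ne\HH$ by the asymptotically nonvanishing assumption, $\HH_0(T)$ is already a nontrivial hyperinvariant subspace and we are done. So we may assume $T\in C_{1\cdot}$. In that case $X_T$ is injective, because $\|X_Th\|=\lim_n\|T^nh\|>0$ for $h\ne0$. This reduction is not needed for the logic, but it shows that the conclusion is consistent with the structure of $T$.

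The main point to check, then, is the invocation of the cited result that $\o(T)=\TT$ forces the existence of shift-type invariant subspaces. I expect that step to be the only nontrivial input. The rest is the observation that the Question applies verbatim to $T$.
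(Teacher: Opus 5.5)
Your proposal is correct and follows the paper's own route: by the cited result (Chapter IX of Sz.-Nagy--Foias), $\o(T)=\TT$ gives $\hbox{Lat}_sT\ne\emptyset$, and an affirmative answer to Question~\ref{main-question} then yields a nontrivial hyperinvariant subspace immediately. Your preliminary reduction to $T\in C_{1\cdot}$ is harmless but, as you note yourself, not needed for the argument.
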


The asymptotically nonvanishing a.c.\  contraction $T$ is called \emph{quasianalytic}, if given any nonzero $h\in\HH$, the vector $X_Th$ is cyclic for $\{U_T\}'$.
It turns out that (HSP) can be reduced to this special class.
We note also that quasianalytic contractions are of class $C_{10}$:
\[\lim_{n\to\i}\|T^{*n}h\|=0< \lim_{n\to\i}\|T^nh\|,\quad \hbox{ for every } \; 0\ne h\in\HH.\]
(See \cite{ker13}.)

The contraction $T$ is called \emph{asymptotically cyclic}, if $U_T$ is cyclic.
In that case the commutant $\{U_T\}'$ can be identified with a function algebra; see \cite{ker11} and \cite{KSz}.
Let $\L_0(\HH)$ denote the set of asymptotically cyclic, quasianalytic contractions, and let $\L_1(\HH)$ be the set of contractions $T\in\L_0(\HH)$ with $\o(T)=\TT$.
It turns out that (HSP) is equivalent in these classes.
Namely, it has been proved in \cite{KT} that for every $T\in\L_0(\HH)$ there exists an $R\in\L_1(\HH)$ such that $\{T\}'=\{R\}'$, and so $\hbox{Hlat}\,T= \hbox{Hlat}\,R$.
In view of Corollary \ref{full-residual}, we obtain

\begin{cor}
\label{asymptotically-cyclic}
Every $T\in\L_0(\HH)$ has a nontrivial hyperinvariant subspace.
Hence every asymptotically cyclic, asymptotically nonvanishing a.c.\ contraction has a nontrivial hyperinvariant subspace.
\end{cor}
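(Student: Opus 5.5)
This corollary is conditional: it rests on an affirmative answer to Question \ref{main-question}, and it follows from Corollary \ref{full-residual} together with the result of \cite{KT}.

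For the first assertion, let $T\in\L_0(\HH)$. By \cite{KT} there is an $R\in\L_1(\HH)$ with $\{T\}'=\{R\}'$, so $\hbox{Hlat}\,T=\hbox{Hlat}\,R$. Since $R\in\L_1(\HH)\subset\L_0(\HH)$, the contraction $R$ is quasianalytic. It is therefore asymptotically nonvanishing and a.c., and it satisfies $\o(R)=\TT$. Corollary \ref{full-residual} then gives $R$ a nontrivial hyperinvariant subspace, and this subspace is also hyperinvariant for $T$. One further check is needed: Corollary \ref{full-residual} (via Question \ref{main-question}) presupposes that $R$ is nonscalar. This holds because $\HH$ is infinite dimensional and quasianalytic contractions are of class $C_{10}$, and a scalar contraction $\lambda I$ cannot be of class $C_{10}$: for $|\lambda|=1$ the adjoint powers do not tend to $0$, and for $|\lambda|<1$ the powers themselves tend to $0$.

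For the second assertion, let $T$ be an asymptotically cyclic, asymptotically nonvanishing a.c.\ contraction. I split into two cases.

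\emph{Case 1: $T$ is quasianalytic.} Then $T\in\L_0(\HH)$ and the first part applies.

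\emph{Case 2: $T$ is not quasianalytic.} The plan is to produce a proper hyperinvariant subspace directly. Since $U_T$ is cyclic, $\{U_T\}'$ is a commutative function algebra $L^\infty(\o(T))$ acting on $L^2(\o(T))$. The cyclicity of $X_Th$ for $\{U_T\}'$ then means that the support $\o_h$ of the function $X_Th$ equals $\o(T)$ up to a null set. Failure of quasianalyticity gives some $0\ne h$ with $m_\circ(\o(T)\setminus\o_h)>0$. Take a Borel set $\alpha\subset\o(T)$ with $m_\circ(\alpha)>0$ and $E_T(\alpha)X_Th=0$, and set
\[
\M=\{x\in\HH: E_T(\alpha)X_Tx=0\}.
\]

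\emph{$\M$ is hyperinvariant.} For $C\in\{T\}'$ there is $D\in\{U_T\}'$ with $X_TC=DX_T$; this is the universal property of the unitary asymptote, see \cite{ker13}. Since $U_T$ is cyclic, $D$ commutes with $E_T(\alpha)$, and hence $C\M\subset\M$.

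\emph{$\M$ is nontrivial.} We have $h\in\M$, so $\M\ne\{0\}$. On the other hand $\M\ne\HH$: the vectors $U_T^{-n}X_T\HH$ span $\K_T$, and $E_T(\alpha)\ne0$, so $E_T(\alpha)X_T\ne0$.

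The only delicate point I expect is the last step, the intertwining-lifting property of the unitary asymptote. If \cite{ker13} is taken as known, this step causes no difficulty.
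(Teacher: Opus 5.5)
Your proof is correct and follows the paper's route. The first assertion is exactly the paper's argument: \cite{KT} gives $R\in\L_1(\HH)$ with $\{T\}'=\{R\}'$, and Corollary \ref{full-residual} applies to $R$. For the second assertion, the paper's ``Hence'' implicitly uses the reduction of (HSP) to quasianalytic contractions from \cite{ker13}; your Case~2 proves that reduction explicitly when $U_T$ is cyclic, via the hyperinvariant subspace $\ker E_T(\alpha)X_T$. Your nonscalar check is harmless but unnecessary, since a scalar operator has no shift-type invariant subspace.
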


If $T$ is cyclic, then it is asymptotically cyclic.
If $T$ is not cyclic, then it has nontrivial invariant subspaces. 
In view of Corollary \ref{asymptotically-cyclic} we obtain

\begin{cor}
\label{invariant}
Every asymptotically nonvanishing contraction $T$ has nontrivial invariant subspaces.
\end{cor}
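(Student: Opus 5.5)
Throughout, this corollary is read, like Corollaries \ref{full-residual} and \ref{asymptotically-cyclic}, as conditional on an affirmative answer to Question \ref{main-question}. The aim is to reduce an arbitrary asymptotically nonvanishing contraction $T\in\L(\HH)$ to the setting of Corollary \ref{asymptotically-cyclic}, handling each case where a reduction is impossible by exhibiting an invariant subspace directly. We may assume $\dim\HH=\aleph_0$, as in the Introduction. If $T$ is not cyclic, then for any $0\ne x\in\HH$ the subspace $\M_x=\vee\{T^nx\}_{n=0}^\i$ is invariant, nonzero, and different from $\HH$, so we are done. Hence we assume from now on that $T$ is cyclic.

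Next we pass to the a.c.\ part. Write the canonical decomposition $T=U_s\op U_a\op T_c$. Suppose the singular unitary summand $U_s$ acts on a nonzero space. If the complementary summand is also nonzero, then that summand is a nontrivial (even hyperinvariant) subspace. If instead $T=U_s$, then $T$ is a unitary operator on a space of dimension at least $2$, and its spectral subspaces are nontrivial invariant subspaces. (If the spectral measure of $U_s$ is a single atom, then $T$ is scalar and every subspace is invariant.) So we may assume $U_s$ is absent, that is, $T$ is absolutely continuous.

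Now consider the subspace $\HH_0(T)$ of stable vectors. It is invariant, and by asymptotic nonvanishing it differs from $\HH$. If $\HH_0(T)\ne\{0\}$, it is a nontrivial invariant subspace. Otherwise $T\in C_{1\cdot}$, so $T$ is an asymptotically nonvanishing a.c.\ contraction.

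It remains to check that $T$ is asymptotically cyclic. Let $x$ be a cyclic vector for $T$. Since $X_T\in\I(T,U_T)$, we have
\[X_T p(T)x = p(U_T)X_Tx\]
for every polynomial $p$. Hence $X_T\HH$ lies in the closure of $\{p(U_T)X_Tx\}$, which is contained in $\vee\{U_T^nX_Tx\}_{n\ge0}$. Combined with $\K_T=\vee\{U_T^{-n}X_T\HH\}_{n=1}^\i$, this shows that $X_Tx$ is cyclic for the unitary $U_T$ (with both positive and negative powers allowed). So $U_T$ is cyclic, and the second assertion of Corollary \ref{asymptotically-cyclic} gives a nontrivial hyperinvariant subspace, in particular a nontrivial invariant one.

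The only step needing care is this final one, namely that cyclicity of $T$ passes to $U_T$ through the characterization of the unitary asymptote. It is straightforward once the density property of $\K_T$ is invoked.
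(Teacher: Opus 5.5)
Your proof is correct and follows the paper's route. A non-cyclic $T$ has the nontrivial invariant subspaces $\vee\{T^nx\}_{n=0}^\i$, while a cyclic $T$ is asymptotically cyclic, so the second assertion of Corollary \ref{asymptotically-cyclic} applies, conditionally on Question \ref{main-question}. You also spell out what the paper leaves tacit: discarding the singular unitary part, and the density argument showing that cyclicity passes to $U_T$. That argument produces a $*$-cyclic vector $X_Tx$ for $U_T$, which suffices: a unitary operator with a $*$-cyclic vector is multiplicity-free and therefore has a cyclic vector in the forward sense as well.
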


The concept of unitary asymptote can be introduced in connection with any Hilbert space operator; see \cite{ker19} where sufficient conditions are given for its existence.
However, it may happen that $T$ does not have a unitary asymptote even in the case when $T$ is quasisimilar to an isometry; see \cite{Gamal23}.
(The universality property fails.)
If $T$ is a power bounded operator, then the existence of unitary asymptote $(X_T,U_T)$  is ensured by the aid of Banach limits; see \cite{ker89}.

Let us assume now that the operator $T\in\L(\HH)$ is \emph{polynomially bounded}.
Then $T$ can be decomposed into the direct sum $T= T_a \pd T_s$, where $T_a$ is absolutely continuous and $T_s$ is singular in terms of elementary measures; see \cite{ker16}.
The singular component $T_s$ is similar to a singular unitary operator $U_s$, while for $T_a$ there is an $H^\i$-functional calculus.
It has been proved in \cite{Gamal20} that under some conditions $T$ has a shift-type invariant subspace.
Thus, positive answer for Question \ref{main-question} yields the following

\begin{cor}
\label{polynomially-bounded}
If $T$ is a cyclic, a.c.\ polynomially bounded operator and $U_T$ is the simple bilateral shift, then $T$ has a nontrivial hyperinvariant subspace.
\end{cor}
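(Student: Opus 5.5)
This corollary is conditional: it assumes an affirmative answer to Question \ref{main-question}, in the same way as Corollaries \ref{full-residual}--\ref{invariant}. So the task is to show that, under its hypotheses, $T$ either has a nontrivial hyperinvariant subspace for an obvious reason or has a shift-type invariant subspace. In the latter case Question \ref{main-question} finishes the proof. The shift-type invariant subspace will come from the result of \cite{Gamal20}, which gives such subspaces for polynomially bounded operators under suitable conditions. The main work is to check that our hypotheses fit the conditions of \cite{Gamal20}, after reducing to a nondegenerate situation.

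\emph{Preliminary reductions.} Since $T$ is polynomially bounded, it is power bounded, so its unitary asymptote $(X_T,U_T)$ exists by \cite{ker89}. The kernel $\ker X_T$ consists of the vectors $h$ with $X_Th=0$. It is hyperinvariant: if $Q\in\{T\}'$, the pair $(X_TQ,U_T)$ is again a unitary intertwining pair, so universality gives $X_TQ=YX_T$ for some $Y$, and hence $Q\ker X_T\subset\ker X_T$. If $\ker X_T$ is nontrivial we are done, so we may assume it is trivial.
\begin{itemize}
\item If $\ker X_T=\HH$, then $X_T=0$. This is impossible, because $\K_T$ is spanned by $U_T^{-n}X_T\HH$, while $U_T$ is the simple bilateral shift and so $\K_T\ne\{0\}$.
\item Hence $\ker X_T=\{0\}$, that is, $X_T$ is injective.
\end{itemize}
We may also assume $T$ is not scalar, since a scalar $T$ cannot have $U_T$ equal to the bilateral shift.

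\emph{Main step.} Now $T$ is cyclic, absolutely continuous and polynomially bounded, $X_T$ is injective, and $U_T$ is the simple bilateral shift. This means $T$ is quasiaffinely related to the bilateral shift on $L^2(\TT)$ with full residual set. In this setting I expect the sufficient condition of \cite{Gamal20} to apply and produce $\HH_1\in\hbox{Lat}_sT$. Heuristically, the intertwiner $X_T$ carries $T$ into $U_T$, and one looks for an invariant subspace on which $X_T$ is bounded below and has range landing in an $H^2$-type shift-invariant part of $L^2$. Cyclicity of $T$ makes $X_T\HH$ a dense cyclic subspace, and absolute continuity makes the $H^\i$-calculus available.

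\emph{Conclusion and main obstacle.} Given $\HH_1\in\hbox{Lat}_sT$, the assumed affirmative answer to Question \ref{main-question} yields a nontrivial hyperinvariant subspace of $T$. The delicate point is matching the hypotheses precisely with the conditions in \cite{Gamal20}, in particular whether cyclicity together with $U_T$ being the simple bilateral shift suffices as stated there. I would first verify that reference's exact formulation, and supplement it with the reduction to injective $X_T$ above if it requires that.
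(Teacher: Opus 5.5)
Your proposal takes the same route as the paper: the corollary is conditional on an affirmative answer to Question~\ref{main-question}, and its only substantive input is the result of \cite{Gamal20}, which supplies a shift-type invariant subspace for a cyclic a.c.\ polynomially bounded $T$ with $U_T$ the simple bilateral shift; the paper likewise just cites this. Your reduction to injective $X_T$ (via the hyperinvariant subspace $\ker X_T=\HH_0(T)$) is correct but unnecessary, and you should drop the heuristic remarks in the main step, because $X_T\HH$ need not be dense in $\K_T$: already for $T=S$ one gets $H^2\subset L^2$.
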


\section{The similarity model}
\label{sim-model}

Let us consider the block-triangular operator
\[T= \left[
\begin{matrix}
A & C\\
0 & B
\end{matrix}\right]
\in\L(\HH_1\op \HH_2),\]
where
$\dim\HH_1=\dim\HH_2=\aleph_0$.
Let us fix an orthonormal basis $\{e_k\}_{k=1}^\i$ in an auxiliary Hilbert space $\HH$, and let us consider the simple unilateral shift $S=\sum_{k=1}^\i e_{k+1}\ot e_k$ on $\HH$.
\emph{We assume that $A$ is similar to $S$}, and so $\HH_1$ is a shift-type invariant subspace of $T$.
Let $V\in\I(S,A)$ be an invertible transformation.

For the operator entry $B$ we apply the technique leading to almost invariant half-spaces; see \cite{HP} and \cite{ker25}.
Select a point on the boundary of the spectrum of $B:\; \m_0\in\partial\s(B)$, and take a sequence $\{\m_k\}_{k=1}^\i$ in the resolvent set $\rho(B)$ of $B$, converging to $\m_0$.
We may assume that $\{\m_k\}_{k=1}^\i$ are distinct, nonzero complex numbers.
If the convergence $\m_k \to \m_0\;(k\to\i)$ is fast enough, then the translate $B-\m_0I$ is similar to an operator
\[\wh B= \left[
\begin{matrix}
* & *\\
F_* & D_*\end{matrix}\right]
\in\L(\HH\op \HH),\]
where $D_*= \sum_{k=1}^\i (\m_k-\m_0) e_k \ot e_k$ and $\hbox{rank}\ F_*=1$.
Let $W\in\I(\wh B,B)$ be an invertible transformation.
Combining these mappings we obtain the invertible transformation
\[Z= \left[\begin{matrix}
V&0\\
0&W\end{matrix}\right]
\in\L\left(\HH\op(\HH\op\HH), \HH_1\op\HH_2\right).\]

\begin{pro}
\label{similarity-model}
We have $(T-\m_0I)Z=Z\wh T$, where
\[\wh T= 
\left[\begin{matrix}
S-\m_0I & V^{-1}CW\\
0& \wh B\end{matrix}
\right] =
\left[\begin{matrix}
S-\m_0I & *&*\\
0&*&*\\
0&F_*&D_*\end{matrix}\right]
\in\L(\HH\op\HH\op\HH).\]
\end{pro}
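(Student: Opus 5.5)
The plan is to verify the intertwining relation directly by block multiplication, using only the two similarity relations recorded before the statement: $AV=VS$, i.e.\ $V\in\I(S,A)$, and $BW=W\wh B'$. Here I write $\wh B'$ for the operator satisfying $W\in\I(\wh B',B)$. Since $W$ was chosen in $\I(\wh B, B)$ for the model of the translate, the relation must be read as $(B-\m_0 I)W=W\wh B$. This is the first point to fix: the phrase ``$B-\m_0I$ is similar to $\wh B$'' together with $W\in\I(\wh B,B)$ has to be interpreted as $W\wh B=(B-\m_0I)W$. Subtracting $\m_0V$ from both sides of $AV=VS$ gives the analogous translated relation $(A-\m_0I)V=V(S-\m_0I)$.

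Next I compute both sides on $\HH\op(\HH\op\HH)$. Since $T-\m_0I=\left[\begin{matrix}A-\m_0I & C\\ 0& B-\m_0I\end{matrix}\right]$, we get
\[(T-\m_0I)Z=\left[\begin{matrix}(A-\m_0I)V & CW\\ 0 & (B-\m_0I)W\end{matrix}\right],
\qquad
Z\wh T=\left[\begin{matrix}V(S-\m_0I) & VV^{-1}CW\\ 0 & W\wh B\end{matrix}\right].\]
The $(1,1)$ entries agree by the translated shift relation, and the $(2,2)$ entries agree by the defining property of $W$. The $(1,2)$ entries agree because $VV^{-1}=I$ on $\HH_1$. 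This is the only place where the invertibility of $V$ is used.

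Finally, the second displayed form of $\wh T$ comes from substituting the block form of $\wh B$. The last row of $\wh B$ is $[F_*\ D_*]$, and its first row consists of unspecified entries. The entry $V^{-1}CW\in\L(\HH\op\HH,\HH)$ splits into two unspecified blocks. This yields the $3\times3$ operator matrix, with zeros in positions $(2,1)$ and $(3,1)$ and $D_*$ in position $(3,3)$.

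The computation is routine. The only genuine obstacle is the bookkeeping of the translation by $\m_0$: one must make sure the almost-invariant-half-space construction cited from \cite{HP} and \cite{ker25} is applied to $B-\m_0I$, and that $W$ intertwines $\wh B$ with this translate rather than with $B$ itself.
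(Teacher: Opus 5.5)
Your proposal is correct and is exactly the block-multiplication check the paper leaves implicit; the paper states the proposition without proof. You are also right that the relation for $W$ must be read as $W\wh B=(B-\m_0I)W$, i.e.\ $W\in\I(\wh B,B-\m_0I)$: the paper's literal $W\in\I(\wh B,B)$ would make the $(2,2)$ entries disagree unless $\m_0=0$.
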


It is clear that $\hbox{Hlat}\ T = \hbox{Hlat}(T-\m_0I)$ is isomorphic to $\hbox{Hlat}\ \wh T$.

Given any $Q=[Q_{i,j}]_3\in\{\wh T\}'$, comparing the (3,1)-entries in the products $\wh TQ=Q\wh T$, we obtain
\[F_* Q_{2,1} + D_* Q_{3,1} = (\wh T Q)_{3,1} =(Q\wh T)_{3,1}= Q_{3,1}(S-\m_0I),\]
whence
\[(D_*+\m_0I)Q_{3,1} - Q_{3,1}S= -F_*Q_{2,1}.\]
Notice that
\[D:= D_*+\m_0I= \sum_{k=1}^\i \m_k e_k\ot e_k\]
is a cyclic diagonal operator.
We deduce

\begin{pro}
\label{rank1-property}
For every $Q=[Q_{i,j}]_3\in \{\wh T\}'$, we have
\[\hbox{\rm rank}\left(DQ_{3,1} - Q_{3,1}S\right)\le 1.\]
\end{pro}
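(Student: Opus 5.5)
The plan is to read off the $(3,1)$-entry of the commutation relation $\wh T Q = Q\wh T$. The computation just before the statement already does most of the work; what remains is to justify the block-matrix bookkeeping and then bound a rank.

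First, write $\wh T$ in the $3\times 3$ block form of Proposition \ref{similarity-model}. Its third block row is $[\,0,\ F_*,\ D_*\,]$ and its first block column is $[\,S-\m_0I,\ 0,\ 0\,]^T$. For $Q=[Q_{i,j}]_3\in\{\wh T\}'$, the $(3,1)$-entry of $\wh TQ$ is therefore
\[0\cdot Q_{1,1}+F_*Q_{2,1}+D_*Q_{3,1}.\]
The $(3,1)$-entry of $Q\wh T$ is
\[Q_{3,1}(S-\m_0I)+Q_{3,2}\cdot 0+Q_{3,3}\cdot 0 .\]
Equating the two gives
\[D_*Q_{3,1}+\m_0Q_{3,1}-Q_{3,1}S=-F_*Q_{2,1}.\]
Since $D=D_*+\m_0I$, this is exactly $DQ_{3,1}-Q_{3,1}S=-F_*Q_{2,1}$.

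Second, bound the rank of the right-hand side. Because $\hbox{rank}\,F_*=1$, the range of $F_*Q_{2,1}$ lies in the one-dimensional range of $F_*$. Hence $\hbox{rank}(F_*Q_{2,1})\le 1$, and the claim follows.

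There is no real obstacle here. The only point needing care is that the first block column of $\wh T$ has zero entries in positions $(2,1)$ and $(3,1)$, so that the lower-left contributions vanish. This holds because $\HH_1$ is invariant for $T$ and $Z$ is block diagonal, so $Z^{-1}(T-\m_0 I)Z$ keeps the upper triangular form displayed in Proposition \ref{similarity-model}.
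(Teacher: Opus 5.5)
Your proof is correct and is the paper's argument: comparing the $(3,1)$ entries of $\wh TQ=Q\wh T$ gives $DQ_{3,1}-Q_{3,1}S=-F_*Q_{2,1}$, and $\hbox{rank}\,F_*=1$ gives the bound. Your extra justification of the zero blocks in the first column of $\wh T$, via the block-diagonal $Z$ and the invariance of $\HH_1$, is sound; the paper takes it directly from Proposition \ref{similarity-model}.
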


The entries $Q_{3,1}$ of the operators $Q$ in $\{\wh T\}'$ form the linear manifold $\L_{3,1}$ in $\L(\HH)$.
Transitivity of $\L_{3,1}$ is ensured by a cyclicity property of $T$.

\begin{pro}
\label{cyclicity-assumption}
If the operator $T$ satisfies the condition

\medskip
\noindent
\textup{(H)} $\quad\quad \{T\}'x$ is dense in $\HH_1\op\HH_2$, for every nonzero $x\in\HH_1$,

\medskip
\noindent
then the linear manifold $\L_{3,1}$ is transitive.

 \end{pro}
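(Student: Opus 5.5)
We transfer condition (H) to the model $\wh T$ and read off the $(3,1)$-entries. Recall that a linear manifold $\L\subset\L(\HH)$ is transitive if $\L y=\{Ly: L\in\L\}$ is dense in $\HH$ for every nonzero $y\in\HH$. Fix $0\ne y\in\HH$.

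\textbf{Step 1: transfer (H) to the model.} By Proposition \ref{similarity-model}, $\{T\}'=\{T-\m_0I\}'=Z\{\wh T\}'Z^{-1}$. The map $Z$ is block diagonal: its first block $V$ maps the first summand $\HH$ onto $\HH_1$, and $W$ maps $\HH\op\HH$ onto $\HH_2$. Put $x=Vy$; then $x$ is a nonzero vector of $\HH_1$, and $Z(y\op0\op0)=x$. For $Q\in\{\wh T\}'$ we have
\[ZQZ^{-1}x=ZQ(y\op0\op0).\]
Hence
\[\{T\}'x=Z\bigl(\{\wh T\}'(y\op 0\op 0)\bigr).\]
Since $Z$ is an invertible bounded operator, it maps dense sets onto dense sets in both directions. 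So (H) gives that $\{\wh T\}'(y\op0\op0)$ is dense in $\HH\op\HH\op\HH$.

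\textbf{Step 2: project onto the third component.} Let $P_3$ be the coordinate projection of $\HH\op\HH\op\HH$ onto its third summand. Then
\[P_3Q(y\op0\op0)=Q_{3,1}y.\]
A continuous surjection maps dense sets onto dense sets, so $\{Q_{3,1}y: Q\in\{\wh T\}'\}=\L_{3,1}y$ is dense in $\HH$. As $y\ne0$ was arbitrary, $\L_{3,1}$ is transitive.

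There is no real obstacle here. The only point needing care is the bookkeeping of the identification of $\HH_1$ with the first copy of $\HH$ through $V$, and the fact that translating $T$ by $\m_0I$ does not change the commutant. Both follow at once from the block-diagonal form of $Z$ and from Proposition \ref{similarity-model}.
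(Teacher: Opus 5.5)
Your proposal is correct and follows the paper's own argument: you take the vector $y\op 0\op 0$, whose image under $Z$ is a nonzero vector of $\HH_1$, and transfer (H) through $Z^{-1}\{T\}'Z=\{\wh T\}'$. You then project onto the third summand to obtain density of $\{Q_{3,1}y\}$. Your added remarks, that the translation by $\m_0$ leaves the commutant unchanged and that $P_3$ preserves density, just make explicit steps the paper leaves implicit.
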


\noindent
{\bf Proof.}
Given any $0\ne h\in\HH$, we have $0\ne x= Z\wt h\in\HH_1$, where $\wt h= h\op 0\op 0\in \HH^{(3)}$.
In virtue of (H), the linear manifold $\{T\}'x$ is dense in $\HH_1\op \HH_2$.
Hence $Z^{-1}\{T\}'Z\wt h$ is dense in $\HH^{(3)}$, and so $P_3Z^{-1}\{T\}'Z\wt h$ is dense also in $\HH$, where $P_3$ denotes the orthogonal projection of $\HH^{(3)}$ onto $\{0\}\op\{0\}\op\HH$.
But $Z^{-1}\{T\}'Z= \{\wh T\}'$, and for any $Q=[Q_{i,j}]_3\in\{\wh T\}'$ we have $P_3Q\wt h= Q_{3,1}h$.

\rightline{$\square$}

We are going to examine the relation of the properties in the previous two propositions by considering 3-dimensional cross-sections of $\L_{3,1}$.

\section{Cross-sections of the commutant}
\label{cross-sections}

For any $r\in \N$, let us consider the compressions of the arising operators to the 3-dimensional subspace $\E_{3,r}= \vee\{e_r, e_{r+1}, e_{r+2}\}$.
Namely
\[\D_r := D|_{\E_{3,r}} = \sum_{i=1}^3 \m_{r+i-1} (e_{r+i-1}\ot e_{r+i-1})\]
and
\[S_r:= P_{\E_{3,r}}S|_{\E_{3,r}}= e_{r+1}\ot e_r+ e_{r+2}\ot e_{r+1}.\]
Let $\L_{*,r}$ stand for the set of compressions of the operators in $\L_{3,1}$ to $\E_{3,r}$:
\[\L_{*,r}= \left\{ P_{\E_{3,r}}\wt X|_{\E_{3,r}}: \wt X \in \L_{3,1}\right\}.\]
Clearly, $\L_{*,r}$ is a subspace in $\L(\E_{3,r})$.
From Proposition \ref{rank1-property} we may derive the following statement.

\begin{pro}
\label{rank2-condition}
For every $X\in\L_{*,r}$, we have
\[\hbox{\rm rank}(\D_r X-X S_r)\le 2.\]
\end{pro}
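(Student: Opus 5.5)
Write $X=P\wt X|_{\E}$ with $\wt X\in\L_{3,1}$, where $P=P_{\E_{3,r}}$ and $\E=\E_{3,r}$. By Proposition \ref{rank1-property} the operator $R:=D\wt X-\wt XS$ has rank at most 1. The plan is to compress the identity $D\wt X-\wt XS=R$ to $\E$ and to compare the result with $\D_rX-XS_r$. Any discrepancy should come only from the failure of $\E$ to be invariant for $S$, which is a rank-one effect.

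First, $D$ is diagonal in the basis $\{e_k\}$, so $\E$ reduces $D$. Hence $PD=DP$, and therefore $PD\wt X|_{\E}=\D_r P\wt X|_{\E}=\D_rX$.

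Second, I would examine $P\wt XS|_{\E}$. For $e_r$ and $e_{r+1}$ we have $Se_r=e_{r+1}\in\E$ and $Se_{r+1}=e_{r+2}\in\E$. On these vectors $S$ agrees with $S_r$, and $S_re_{r+2}=0$. Thus
\[
S|_{\E}=S_r+e_{r+3}\ot e_{r+2},
\]
viewed as a map $\E\to\HH$. It follows that
\[
P\wt XS|_{\E}=XS_r+(P\wt Xe_{r+3})\ot e_{r+2},
\]
and the second term has rank at most 1.

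Combining the two steps gives
\[
\D_rX-XS_r=PR|_{\E}+(P\wt Xe_{r+3})\ot e_{r+2}.
\]
The first summand has rank at most 1 because $R$ does, and the second has rank at most 1, so by subadditivity of rank the left side has rank at most 2. The only real point to check is the boundary term coming from the non-invariance of $\E$ under $S$; the rest is routine bookkeeping. Note that we only used $r\ge1$, so there is no boundary term at the left end, since $S$ does not map anything into $e_r$ from inside $\E$ in a way that matters for the compression.
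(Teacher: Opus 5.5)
Your proof is correct and is essentially the paper's argument: your boundary term $(P\wt Xe_{r+3})\ot e_{r+2}$ is exactly the paper's rank-one operator $Y=P_{\E_{3,r}}Q_{3,1}(I-P_{\E_{3,r}})S|_{\E_{3,r}}$, and the bound follows from the same subadditivity of rank. Your closing remark about a possible left-end boundary term is unnecessary, since only the action of $S$ on vectors of $\E_{3,r}$ enters the compression.
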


\noindent
{\bf Proof.}
Given any $X= P_{\E_{3,r}}Q_{3,1}|_{\E_{3,r}}$, with $Q=[Q_{i,j}]_3\in\{\wh T\}'$, we have
\[Z:= P_{\E_{3,r}}\left[DQ_{3,1} - Q_{3,1} S\right]|_{\E_{3,r}} = \D_r X-XS_r -Y,\]
where $Y= P_{\E_{3,r}} Q_{3,1} (I-P_{\E_{3,r}})S|_{\E_{3,r}}$ and $\hbox{rank}\ Y\le 1$.
In view of Proposition \ref{rank1-property}, we obtain
\[\hbox{rank}(\D_r X- X S_r) = \hbox{rank}(Z+Y)\le 2.\]

\rightline{$\square$}

Furthermore, in virtue of Proposition \ref{cyclicity-assumption}, we readily obtain

\begin{pro}
\label{cross-section-transitivity}
If $T$ satisfies the condition $\hbox{\rm (H)}$, then the cross-section $\L_{*,r}$ is a transitive subspace in $\L(\E_{3,r})$.
\end{pro}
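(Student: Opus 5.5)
We have to show that $\L_{*,r}$ is transitive, meaning that $\L_{*,r}y=\E_{3,r}$ for every nonzero $y\in\E_{3,r}$. In finite dimensions the image of a subspace is automatically closed, so it is enough to show that $\L_{*,r}y$ is dense in $\E_{3,r}$.

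Fix $0\ne y\in\E_{3,r}\subset\HH$. Proposition \ref{cyclicity-assumption} applies under (H), so the linear manifold $\L_{3,1}y=\{\wt X y:\wt X\in\L_{3,1}\}$ is dense in $\HH$. The orthogonal projection $P_{\E_{3,r}}$ is continuous and surjective, hence it maps dense sets onto dense sets. Therefore $P_{\E_{3,r}}\L_{3,1}y$ is dense in $\E_{3,r}$.

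Since $y\in\E_{3,r}$, we have $\wt Xy=\wt X|_{\E_{3,r}}y$. Consequently
\[P_{\E_{3,r}}\wt X y=\left(P_{\E_{3,r}}\wt X|_{\E_{3,r}}\right)y .\]
As $\wt X$ runs over $\L_{3,1}$, the compressions $P_{\E_{3,r}}\wt X|_{\E_{3,r}}$ run over $\L_{*,r}$. This gives $P_{\E_{3,r}}\L_{3,1}y=\L_{*,r}y$.

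Now $\L_{*,r}y$ is a dense linear subspace of the $3$-dimensional space $\E_{3,r}$. A linear subspace of a finite-dimensional space is closed, so $\L_{*,r}y=\E_{3,r}$. Since $y$ was an arbitrary nonzero vector, $\L_{*,r}$ is transitive.

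The argument is routine. The only points needing care are that the projection preserves density and that, because $\E_{3,r}$ is finite dimensional, density upgrades to equality.
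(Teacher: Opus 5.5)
Your proof is correct and takes the same approach as the paper, which simply states that the result follows readily from Proposition \ref{cyclicity-assumption}. Your argument (density of $\L_{3,1}y$ for each nonzero $y$, compression to $\E_{3,r}$ preserving density, and finite dimensionality turning density into equality) is exactly the verification the paper leaves out.
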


We notice that in finite dimension transitivity means strict transitivity, that is $\L_{*,r} x = \E_{3,r}$ holds for every $0\ne x\in\E_{3,r}$.

\section{Subspaces of $3 \times 3$ complex matrices}
\label{subspaces-of-matrices}

Fixing an orthonormal basis $(e_1,e_2,e_3)$ in the Hilbert space $\E_3$, the algebra $\L(\E_3)$ of operators acting on $\E_3$ can be identified with the algebra $M_3[\C]$ of $3 \times 3$ complex matrices.
Let $\DD_3$ stand for the system of cyclic diagonal matrices 
$\D = \hbox{diag}(b_1,b_2,b_3)$ in $M_3[\C]$; thus $b_1, b_2, b_3$ are distinct nonzero complex numbers.
Let us take also the Jordan-cell
\[J= \left[\begin{matrix}
0&0&0\\
1&0&0\\
0&1&0\end{matrix}\right]
\in M_3[\C].\]
Given any $\D= \hbox{diag}(b_1,b_2,b_3)\in\DD_3$, we consider the linear mapping
\[\G_\D \co M_3[\C] \to M_3[\C], \quad X \mapsto \D X -X J.\]
Since $\s(\D)\cap\s(J)= \{b_1,b_2,b_3\}\cap\{0\} =\emptyset$, it follows by Rosenblum's theorem that $\G_\D$ is invertible.

Let $\wt\F_2$ denote the variety of singular matrices; that is $X\in\wt\F_2$ if $\hbox{rank}\ X\le 2$, or equivalently $\det X=0$.
Let $\hbox{Lat} M_3[\C]$ denote the lattice of subspaces in the vector space $M_3[\C]$.
Given any $\D\in\DD_3$, the subspace $\L\in \hbox{Lat}M_3[\C]$ is called \emph{$\D$-singular}, in notation: $\L\in \S_3[\D]$, if
\[\G_\D(\L):= \{\G_\D(X): X\in\L\} \subset \wt\F_2.\]
On the other hand, $\L$ is \emph{$\D$-regular}, in notation: $\L\in\R_3[\D]$, if there exists $X\in\L$ such that $\det \G_\D(X)\ne 0$.
The subspaces in
\[\R_3^+= \cap\{\R_3[\D]: \D\in\DD_3\}\]
are called \emph{strongly regular}.
Finally, let $\T_3$ denote the system of \emph{transitive subspaces} in $\hbox{Lat}M_3[\C]$.
Our main goal is to detect the connection of these properties.
The next statement describes the possible dimensions.

\begin{pro}
\label{possible-dimension}
Let $\L\in \hbox{\rm Lat}M_3[\C]$ be given.
\begin{itemize}
\item[\textup{(a)}]  If $\L\in\S_3[\D]$ holds with some $\D\in\DD_3$, then $\dim\L\le6$.
\item[\textup{(b)}] If $\L\in\T_3$, then $\dim\L\ge5$.
\end{itemize}
\end{pro}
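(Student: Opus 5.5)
Both parts reduce to standard dimension counts in $M_3[\C]$.

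For (a), the plan is to transport the problem through the invertible map $\G_\D$. Since $\G_\D$ is a linear bijection of $M_3[\C]$, the image $\G_\D(\L)$ is a subspace with $\dim\G_\D(\L)=\dim\L$. If $\L\in\S_3[\D]$, this image lies in $\wt\F_2$, so it is a linear subspace of singular matrices. It therefore suffices to show that every subspace of $M_3[\C]$ consisting of singular matrices has dimension at most $6=9-3$. This is the case $n=3$ of Dieudonn\'e's theorem. I would give a direct argument instead of quoting it. Let $\M$ be the subspace of $M_3[\C]$ of matrices whose first row and first column vanish outside the diagonal entries, with arbitrary entry in position $(1,1)$ and arbitrary $2\times 2$ lower-right block, and all entries in the lower triangular part of the lower-right block equal to zero... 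A cleaner choice of complementary subspace is $\M=\{$upper triangular matrices$\}\cap\{$...$\}$, or better, use the projective-geometry argument. The variety $\{\det=0\}$ in $\mathbb P(M_3)\cong\mathbb P^8$ is a hypersurface. A projective linear subspace $\mathbb P(\L')$ of dimension $d-1$, with $d=\dim\L'\ge 7$, would meet the $2$-dimensional projective subspace $\mathbb P(\mathcal{D}iag)$ spanned by diagonal matrices, since $(d-1)+2\ge 8$. That only yields some common nonzero matrix, which need not be invertible, so this route fails as stated.

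The robust elementary route uses the affine line $\{\lambda I+N\}$. Pick the $3$-dimensional space $\mathcal C$ spanned by $I$, $P$ and $P^2$, where $P$ is the cyclic permutation matrix. Every nonzero element $aI+bP+cP^2$ is a circulant with eigenvalues $a+b\o^k+c\o^{2k}$, $k=0,1,2$, where $\o$ is a primitive cube root of unity. These can all vanish simultaneously, so such elements can be singular, and $\mathcal C$ is not a space of invertible matrices; the trick must instead use a pencil. The standard argument, which I will follow, goes like this. Assume $\dim\L'\ge7$, so $\L'$ contains a subspace of codimension $\le 2$. Then $\L'$ meets the $3$-dimensional space of matrices supported on the positions $(1,1),(2,2),(3,3)$ in at least one dimension, and more usefully meets the $3$-dimensional spaces $\mathcal N_\sigma$ of matrices supported on a permutation pattern $\sigma$. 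The main obstacle is exactly this step. I would handle it by induction: a singular space $\L'$ of dimension $\ge 7$ contains a rank-$\le 2$ structure forcing, via Flanders' theorem (a singular space of dimension $>n(n-1)$ is impossible), a contradiction. Since the paper permits citing the classical Dieudonn\'e/Flanders bound, I will simply cite it: every linear subspace of singular $n\times n$ matrices has dimension at most $n^2-n$. Hence $\dim\L\le 6$.

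For (b), let $\L\in\T_3$. Fix $0\ne x\in\E_3$ and consider the evaluation map $\L\to\E_3$, $X\mapsto Xx$. By strict transitivity (noted after Proposition \ref{cross-section-transitivity}) this map is surjective, so its kernel $\L_x$ has dimension $\dim\L-3$. Transitivity passes to the dual action: a transitive subspace has transitive adjoint, since $\la Xx,y\ra=0$ for all $X\in\L$ would give $\L x\perp y$. I would then use the counting argument on $\mathbb P^2\times\mathbb P^2$. The set $\{([x],[y]) : \la Xx,y\ra=0 \ \forall X\in\L\}$ is cut out by $\dim\L$ bilinear equations in a $4$-dimensional variety. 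If $\dim\L\le4$, this set is nonempty by the dimension theorem for intersections of ample divisors on $\mathbb P^2\times\mathbb P^2$: each bilinear form defines a divisor of type $(1,1)$, and at most $4$ such divisors in a $4$-dimensional projective variety have nonempty intersection. A point of this set gives $x\ne0$ with $\L x\subset y^\perp\ne\E_3$, contradicting transitivity. Hence $\dim\L\ge5$. For this step, the intersection-of-ample-divisors fact is the key input, and it is standard.
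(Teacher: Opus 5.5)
Your proof is correct. Part (a) is the paper's argument; part (b) replaces the paper's citation with a self-contained proof.

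\textbf{Part (a).} You end up doing what the paper does. $\G_\D$ is a linear bijection, so $\dim\G_\D(\L)=\dim\L$, and the Dieudonn\'e--Flanders bound $n^2-n=6$ for spaces of singular matrices finishes the step. Delete the abandoned attempts: the diagonal-intersection idea, the circulant pencil, and the ``induction via Flanders'' sentence, which is circular as written. The citation carries the whole step.

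\textbf{Part (b).} The paper only cites Azoff; you give an actual proof, and it works with one cosmetic fix. $\la Xx,y\ra$ is conjugate-linear in $y$, so its zero set is not a $(1,1)$-divisor on $\mathbb{P}^2\times\mathbb{P}^2$. Use the bilinear pairing $y^{T}Xx$ instead, or conjugate the coordinates of $y$, as the paper does when it writes $y=\ol\et_1e_1+\ol\et_2e_2+\ol\et_3e_3$. A common zero then gives $x\ne0$ with $\L x\subset\ker y^{T}$, a $2$-dimensional subspace, which contradicts transitivity.

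Your ample-divisor nonemptiness claim is correct. A more transparent way to see it: $y^{T}Xx=\mathrm{tr}(X\,xy^{T})$, so you need a nonzero rank-one matrix $xy^{T}$ in the trace-annihilator $\L^{\perp}$. When $\dim\L\le4$, $\L^{\perp}$ has dimension $9-\dim\L\ge5$. The cone of matrices of rank at most one has dimension $5$ in $\C^9$, and $5+5>9$, so the two cones meet in a nonzero point.

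This is essentially the preannihilator idea behind Azoff's bound $\dim\L\ge2n-1$. What your route buys is a proof instead of a reference, and it generalizes verbatim to $n\times n$ matrices. The paper's citation is simply shorter. Your preliminary remarks on $\dim\L_x=\dim\L-3$ and on transitivity of the adjoint are never used and can be dropped.
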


\noindent
{\bf Proof.} If $\L\in\S_3[\D]$, then the subspace $\G_\D(\L):= \{\G_\D(X): X\in\L\}$ is included in $\wt\F_2$, and so $\dim\G_\D(\L)\le 6$ by \cite{Flanders}.
Furthermore, the invertibility of $\G_\D$ yields that $\dim\L= \dim\G_\D(\L)$.
In connection with (b), we refer to \cite{Azoff}.

\rightline{$\square$}

Given any $X=[\x_{i,j}]_3\in M_3[\C]$ and $\D= \hbox{diag}(b_1,b_2,b_3)\in\DD_3$, we have
\[\G_\D(X)=
\left[\begin{matrix}
b_1\x_{1,1}-\x_{1,2} & b_1\x_{1,2}-\x_{1,3} & b_1\x_{1,3}\\

b_2 \x_{2,1}- \x_{2,2} & b_2 \x_{2,2}- \x_{2,3} & b_2\x_{2,3}\\

b_3 \x_{3,1}-\x_{3,2} & b_3\x_{3,2}-\x_{3,3} & b_3\x_{3,3}
\end{matrix}\right].\]
The determinant of this matrix can be easily evaluated if there are two zeros in the third column of $X$.
For any $r\in\N_3:=\{1,2,3\}$, let $P_r:=(k_1,k_2)$, where $k_1<k_2$ and $\{k_1,k_2\}\cup\{r\}=\N_3$.
Let $M_{P_r}$ denote the system of those matrices $X= [\x_{i,j}]_3$, where $\x_{k_1,3}=\x_{k_2,3}=0$.
The $2 \times 2$ submatrix below is of great significance
\[X_{P_r}= \left[\begin{matrix}
\x_{k_1,1} & \x_{k_1,2}\\
\x_{k_2,1} & \x_{k_2,2}
\end{matrix}\right].\]
Expanding the determinant along the third column, we obtain
\[\det\G_\D(X)= (-1)^{r+3} b_1b_2b_3\ \x_{r,3} \left(\det X_{P_r} + \left(\frac{1}{b_{k_2}}-\frac{1}{b_{k_1}}\right) \x_{k_1,2}\x_{k_2,2}\right).\]
For any $b\in\C_0:= \C\setminus\{0\}$, let us consider the functional
\[F_b\co M_2[\C] \to \C, \quad Z=[\z_{i,j}]_2 \mapsto \det Z + b\  \z_{1,2}\z_{2,2}.\]
Let us introduce also the notation
\[b_\D(r):= \frac{1}{b_{k_2}}-\frac{1}{b_{k_1}} (\ne 0).\]
As a result we obtain

\begin{pro}
\label{gamma-determinant}
If $X=[\x_{i,j}]_3\in M_{P_r}\; (r\in\N_3)$ and $\D= \hbox{\rm diag}(b_1,b_2,b_3)\in\DD_3$, then
\[\det\G_\D(X)= (-1)^{r+3} b_1b_2b_3 \ \x_{r,3}\ F_{b_\D(r)}(X_{P_r}).\]
\end{pro}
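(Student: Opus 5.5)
The plan is to compute $\det\G_\D(X)$ directly by cofactor expansion along the third column, using the explicit entrywise formula for $\G_\D(X)$ displayed just before the statement. The expansion is immediate because, for $X\in M_{P_r}$, the third column of $\G_\D(X)$ has at most one nonzero entry. The remaining $2\times 2$ minor then has to be matched with $F_{b_\D(r)}(X_{P_r})$. No earlier result beyond that explicit formula is needed.

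\emph{Step 1: the third column and the expansion.} The third column of $\G_\D(X)$ is $(b_1\x_{1,3},b_2\x_{2,3},b_3\x_{3,3})^T$. Since $X\in M_{P_r}$, we have $\x_{k_1,3}=\x_{k_2,3}=0$, so the only possibly nonzero entry is $b_r\x_{r,3}$, in row $r$. Expanding along this column gives
\[\det\G_\D(X)=(-1)^{r+3}\, b_r\,\x_{r,3}\, \det N,\]
where $N$ is the $2\times 2$ submatrix of $\G_\D(X)$ formed by rows $k_1<k_2$ and columns $1,2$. Row $k$ of $\G_\D(X)$ in columns $1,2$ is $(b_k\x_{k,1}-\x_{k,2},\; b_k\x_{k,2}-\x_{k,3})$. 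For $k\in\{k_1,k_2\}$ the term $\x_{k,3}$ vanishes, so
\[N=\left[\begin{matrix} b_{k_1}\x_{k_1,1}-\x_{k_1,2} & b_{k_1}\x_{k_1,2}\\ b_{k_2}\x_{k_2,1}-\x_{k_2,2} & b_{k_2}\x_{k_2,2}\end{matrix}\right].\]

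\emph{Step 2: evaluating the minor.} Multiplying out,
\[\det N=b_{k_1}b_{k_2}\left(\x_{k_1,1}\x_{k_2,2}-\x_{k_2,1}\x_{k_1,2}\right)-b_{k_2}\x_{k_1,2}\x_{k_2,2}+b_{k_1}\x_{k_2,2}\x_{k_1,2}.\]
Factoring out $b_{k_1}b_{k_2}$, which is nonzero, gives
\[\det N=b_{k_1}b_{k_2}\left(\det X_{P_r}+\left(\frac1{b_{k_2}}-\frac1{b_{k_1}}\right)\x_{k_1,2}\x_{k_2,2}\right)=b_{k_1}b_{k_2}\,F_{b_\D(r)}(X_{P_r}).\]
Here we use that the $(1,2)$ and $(2,2)$ entries of $X_{P_r}$ are $\x_{k_1,2}$ and $\x_{k_2,2}$, and that the coefficient is exactly $b_\D(r)$.

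\emph{Step 3: conclusion.} Since $\{r,k_1,k_2\}=\N_3$, we have $b_r b_{k_1}b_{k_2}=b_1b_2b_3$. Combining Steps 1 and 2 yields the stated formula. The only point requiring care is the bookkeeping in Step 2. One must check that the two mixed terms $\pm\x_{k_1,2}\x_{k_2,2}$ combine with the correct signs and powers of $b_{k_1},b_{k_2}$ to produce $b_\D(r)=1/b_{k_2}-1/b_{k_1}$ after factoring, rather than its negative.
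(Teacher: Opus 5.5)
Your proof is correct and takes the same approach as the paper, which also obtains the formula by expanding $\det\G_\D(X)$ along the third column. You have additionally written out the $2\times 2$ minor computation that the paper leaves implicit, and your signs and the resulting coefficient $b_\D(r)=1/b_{k_2}-1/b_{k_1}$ check out.
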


Let $\M_3$ denote the subspace of those matrices $X=[\x_{i,j}]_3$, where $\x_{1,3} = \x_{2,3}= \x_{3,3}=0$.
For any $\L\in \hbox{Lat}M_3[\C]$, let us consider the subspace \[\L_0:=\L\cap\M_3.\]

We say that $\L_0$ is \emph{functionally $\D$-vanishing}, if for every $r\in\N_3$ and for all $L\in\L_0$, we have $F_{b_\D(r)}(L_{P_r})=0$.

\begin{pro}
\label{regularity-condition}
If $\L\in\T_3$ and $\L_0$ is not functionally $\D$-vanishing for some $\D\in\DD_3$, then $\L\in\R_3[\D]$.
\end{pro}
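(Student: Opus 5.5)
Since $\L_0$ is not functionally $\D$-vanishing, we can choose $r\in\N_3$ and $L\in\L_0$ with $F_{b_\D(r)}(L_{P_r})\ne 0$. Write $P_r=(k_1,k_2)$. The aim is to find $X\in\L$ with $\det\G_\D(X)\ne0$, that is, to witness $\L\in\R_3[\D]$.

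\emph{Step 1: use transitivity.} Because $\L\in\T_3$ is strictly transitive, $\L e_3=\E_3$. Hence there is $M\in\L$ with $Me_3=e_r$. The third column of $M$ is then the unit vector in position $r$, so $M\in M_{P_r}$ and its $(r,3)$ entry equals $1$. The matrix $L$ has zero third column, so $L\in M_{P_r}$ as well.

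\emph{Step 2: a pencil inside $M_{P_r}$.} For $\lambda\in\C$ put $X_\lambda:=M+\lambda L\in\L$. Its third column is still $e_r$, so $X_\lambda\in M_{P_r}$ with $(r,3)$ entry $1$. Proposition \ref{gamma-determinant} gives
\[\det\G_\D(X_\lambda)=(-1)^{r+3}b_1b_2b_3\,F_{b_\D(r)}\bigl(M_{P_r}+\lambda L_{P_r}\bigr).\]
The functional $F_b$ is a quadratic form on $M_2[\C]$, namely a determinant plus the product $b\,\z_{1,2}\z_{2,2}$. Therefore $p(\lambda):=F_{b_\D(r)}(M_{P_r}+\lambda L_{P_r})$ is a polynomial of degree at most $2$, and its $\lambda^2$ coefficient is $F_{b_\D(r)}(L_{P_r})\ne0$.

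\emph{Step 3: conclude.} A polynomial of exact degree $2$ has at most two roots, so we can pick $\lambda$ with $p(\lambda)\ne0$. Since $b_1b_2b_3\ne0$, this gives $\det\G_\D(X_\lambda)\ne0$ with $X_\lambda\in\L$, hence $\L\in\R_3[\D]$.

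The only delicate point is that the chosen $X$ must have a nonzero $(r,3)$ entry and vanishing $(k_1,3)$, $(k_2,3)$ entries. Without this, Proposition \ref{gamma-determinant} does not apply, and $L$ by itself yields $\det\G_\D(L)=0$. Transitivity applied to $e_3$ supplies exactly such an $M$. The homogeneity of $F_b$ then ensures that $L$ dominates the pencil for large $\lambda$.
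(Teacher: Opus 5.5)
Your proposal is correct and follows essentially the same route as the paper: transitivity gives a matrix in $\L$ with third column $e_r$, you form the pencil with $L$ inside $M_{P_r}\cap\L$, and you use that $F_{b_\D(r)}$ evaluated along the pencil is a quadratic polynomial whose leading coefficient $F_{b_\D(r)}(L_{P_r})$ is nonzero, so $\det\G_\D\ne0$ off at most two points. The only blemish is notational: calling your matrix $M$ clashes with the paper's set $M_{P_r}$, so in Step 2 the symbol $M_{P_r}$ denotes both a set of matrices and a $2\times2$ submatrix.
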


\noindent
{\bf Proof.} Let
$\D= \hbox{diag}(b_1,b_2,b_3)\in\DD_3$ be arbitrary, and suppose that $\L_0$ is not functionally $\D$-vanishing.
Then there exist $r\in\N_3$ and $L\in\L_0$ such that $F_{b_\D(r)}(L_{P_r})\ne 0$.
By the transitivity of $\L$ there exists $Y=[\eta_{i,j}]_3\in\L$ such that $Ye_3=e_r$, that is $\eta_{i,3}= \d_{i,r}\; (i\in\N_3)$.
Clearly, $X=zL+Y$ belongs to $M_{P_r}\cap\L$ for every $z\in\C$, and so by Proposition \ref{gamma-determinant} we have
\[\det\G_\D(X)= (-1)^{r+3} b_1b_2b_3\ F_{b_\D(r)}(X_{P_r}).\]
Since
\[X_{P_r}= (zL)_{P_r} + Y_{P_r},\]
it is easy to verify that
\[F_{b_\D(r)}(X_{P_r}) = z^2 F_{b_\D(r)}(L_{P_r})+ cz + F_{b_\D(r)}(Y_{P_r})=: p(z),\]
with some $c\in\C$.
Taking into account that $F_{b_\D(r)}(L_{P_r})\ne0$, it follows that the polynomial $p(z)$ is nonzero for all $z\in\C$, with at most two exceptions.
Thus $\det\G_\D(X)\ne0$ is true with many choices of $z$.

\rightline{$\square$}

The subspace $\L_0$ is called \emph{functionally nonvanishing}, if there is an $r\in\N_3$ such that, for any $b\in\C_0, \; F_b(L_{P_r})\ne0$ holds for some $L\in\L_0$.
Clearly, if $\L_0$ is functionally nonvanishing, then $\L_0$ is not functionally $\D$-vanishing for any $\D\in\DD_3$, and so Proposition \ref{regularity-condition} can be applied.

\begin{cor}
\label{strong-regularity-condition}
If $\L\in\T_3$ and $\L_0$ is functionally nonvanishing, then $\L\in\R_3^+$.
\end{cor}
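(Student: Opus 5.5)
The corollary should follow directly from Proposition \ref{regularity-condition} once we check, for each $\D\in\DD_3$ separately, that its hypothesis is met. Since $\R_3^+=\cap\{\R_3[\D]:\D\in\DD_3\}$, it is enough to prove $\L\in\R_3[\D]$ for an arbitrary fixed $\D=\hbox{diag}(b_1,b_2,b_3)\in\DD_3$.

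Fix such a $\D$. Because $\L_0$ is functionally nonvanishing, there is an index $r\in\N_3$ such that for every $b\in\C_0$ some $L\in\L_0$ satisfies $F_b(L_{P_r})\ne0$. The index $r$ does not depend on $b$, and this uniformity is the whole point. Take $b:=b_\D(r)=\frac{1}{b_{k_2}}-\frac{1}{b_{k_1}}$, where $P_r=(k_1,k_2)$. This number is nonzero because $b_1,b_2,b_3$ are distinct, so $b\in\C_0$. We therefore get $L\in\L_0$ with $F_{b_\D(r)}(L_{P_r})\ne0$, which means that $\L_0$ is not functionally $\D$-vanishing.

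Since $\L\in\T_3$ by assumption, Proposition \ref{regularity-condition} now gives $\L\in\R_3[\D]$. As $\D$ was arbitrary, $\L\in\R_3^+$.

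There is no real obstacle. The only point needing care is that $b_\D(r)$ is indeed an admissible value of $b$, namely that it lies in $\C_0$, and this is exactly where distinctness of the diagonal entries of $\D$ is used.
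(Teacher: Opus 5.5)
Your proposal is correct and is the same argument the paper uses. The paper notes that a functionally nonvanishing $\L_0$ is not functionally $\D$-vanishing for any $\D\in\DD_3$, taking $b=b_\D(r)\in\C_0$ as you do, and then applies Proposition \ref{regularity-condition} for each $\D$.
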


We are going to examine whether $\D$-singularity and transitivity are consistent properties.
In view of Proposition \ref{possible-dimension} we may restrict our attention to 5- and 6-dimensional subspaces.
First we study the 6-dimensional case.

\section{6-dimensional transitive subspaces}
\label{6dimensional-transitive}

\emph{Let us be given a subspace $\L\in\T_3$ with  $\dim\L=6$, and let us consider $\L_0=\L\cap\M_3$.}

\begin{pro}
\label{3dimensional-L0}
The subspace $\L_0$ is $3$-dimensional.
\end{pro}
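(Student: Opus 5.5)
Consider the linear map $\Phi\co \L\to\C^3$, $X\mapsto Xe_3$, which sends a matrix to its third column. Its kernel is exactly $\L_0=\L\cap\M_3$. By transitivity of $\L$ (strict, since we are in finite dimension), $\L e_3=\E_3$, so $\Phi$ is surjective. Rank--nullity then gives
\[\dim\L_0=\dim\L-\dim\Phi(\L)=6-3=3.\]

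So the argument needs only transitivity and $\dim\L=6$, and uses none of the $\D$-singularity machinery. The one point to check is surjectivity of $\Phi$, which is immediate from the definition of a transitive subspace applied to the nonzero vector $x=e_3$. I expect no real obstacle. If the statement were meant as an upper bound only, the same computation gives equality anyway, because $\Phi(\L)\subset\C^3$ forces $\dim\L_0\ge 3$ regardless of transitivity, and transitivity supplies the reverse inequality.
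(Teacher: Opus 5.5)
Your proof is correct and is the paper's own argument: the map $X\mapsto Xe_3$ from $\L$ to $\E_3$ has kernel $\L_0$, it is surjective by (strict) transitivity, and rank--nullity gives $\dim\L_0=6-3=3$. Your side remark is also right: $\dim\L_0\ge 3$ holds without transitivity, and transitivity supplies the reverse bound $\dim\L_0\le 3$.
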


\noindent
{\bf Proof.}
Since $\L$ is transitive, the linear mapping $\Lambda\co \L\to\E_3, X\mapsto Xe_3$ is surjective.
Hence, for its nullspace $\ker\Lambda=\L_0$, we have $\dim\L_0= \dim\L - \dim\E_3= 6-3=3$.

\rightline{$\square$}

Let us consider the lexicographic ordering on $ \N_3 \times \N_2:\; (i_1,j_1)\prec (i_2,j_2)$ if $i_1<i_2$, or $i_1=i_2$ and $j_1<j_2$.
Let $\J_3$ stand for the system of increasing triplets of pairs in $\N_3\times\N_2$.
We may introduce a canonical basis in $\L_0$, indexed by the elements of $\J_3$.

\begin{lem}
\label{3dimensional-L0-basis}
There exist unique triplet $J= \left((i_1,j_1), (i_2,j_2), (i_3,j_3)\right)\in \J_3$ and matrices $L_k = [l_{i,j}^k]_3$ in $\L_0\; (k\in\N_3)$ such that
\[l_{i_r,j_r}^k = \d_{k,r}\quad (k,r\in\N_3)\]
and
\[l_{i,j}^k=0\quad \hbox{if} \;\; (i,j)\prec (i_k,j_k)\; \; (k\in\N_3).\]
\end{lem}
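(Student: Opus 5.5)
This is reduced row echelon form. Every $L\in\L_0$ has zero third column, so $\L_0$ embeds into the 6-dimensional space of $3\times 2$ matrices. The coordinates are indexed by $\N_3\times\N_2$, ordered lexicographically by $\prec$, which is a total order on six positions. By Proposition \ref{3dimensional-L0}, $\dim\L_0=3$. The plan is to apply Gaussian elimination to coordinate vectors listed in the order $\prec$, and to read off the pivot positions as the triplet $J$.

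\emph{Existence.} For a nonzero $L=[l_{i,j}]_3\in\L_0$, let $\pi(L)$ denote the $\prec$-least position $(i,j)\in\N_3\times\N_2$ with $l_{i,j}\ne0$. Set $\Pi=\{\pi(L): 0\ne L\in\L_0\}$. First I will show $|\Pi|=\dim\L_0=3$.
\begin{itemize}
\item[(i)] $|\Pi|\le3$: matrices with distinct leading positions are linearly independent, because a nontrivial combination has its leading position at the smallest pivot involved.
\item[(ii)] $|\Pi|\ge3$: choose $M_p$ with $\pi(M_p)=p$ for each $p\in\Pi$. Any nonzero $L\in\L_0$ can be reduced by subtracting a multiple of $M_{\pi(L)}$, which strictly raises the leading position or yields zero. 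Since there are only finitely many positions, $L$ lies in the span of $\{M_p\}$.
\end{itemize}
Write $\Pi=\{(i_1,j_1)\prec(i_2,j_2)\prec(i_3,j_3)\}$; this triplet lies in $\J_3$. Normalize $M_k:=M_{(i_k,j_k)}$ so that its pivot entry equals $1$. Then eliminate upward: replace $M_2$ by $M_2-(M_2)_{i_3,j_3}M_3$, and $M_1$ by $M_1$ minus the appropriate multiples of $M_2$ and $M_3$. Subtracting $M_s$ with $s>k$ does not disturb the entries of $M_k$ at positions $\prec(i_k,j_k)$, nor its pivot, since $M_s$ vanishes at all positions $\prec(i_s,j_s)$ and $(i_k,j_k)\prec(i_s,j_s)$. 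The resulting $L_k$ therefore satisfy $l^k_{i_r,j_r}=\d_{k,r}$ and $l^k_{i,j}=0$ for $(i,j)\prec(i_k,j_k)$.

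\emph{Uniqueness.} Suppose $J$ and $L_k$ satisfy the stated conditions. Then $\pi(L_k)=(i_k,j_k)$, because $l^k_{i_k,j_k}=1$ and all earlier entries vanish. By (i) these three matrices are independent and hence form a basis of $\L_0$. Every nonzero combination $\sum c_kL_k$ has leading position $(i_k,j_k)$ for the least $k$ with $c_k\ne0$. So the set of leading positions is exactly $\{(i_k,j_k)\}$, which equals $\Pi$; this pins down $J$ intrinsically. Next, take two such bases $(L_k)$ and $(L_k')$. Each $L_k'$ is a combination $\sum c_rL_r$, and evaluating at the pivot positions gives $c_r=l'^{\,k}_{i_r,j_r}=\d_{k,r}$. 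Hence $L_k'=L_k$.

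The only step needing care is the bookkeeping that upward elimination preserves the zeros before each pivot. This follows directly from the monotonicity of the pivots, so I expect no real obstacle here.
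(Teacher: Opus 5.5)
Your argument is correct and complete. It is the standard reduced-echelon-form construction with the six coordinate positions ordered by $\prec$, which is what the paper relies on; the paper gives no proof here and only cites Lemma 8 of \cite{ker25}. You also use $\dim\L_0=3$ from Proposition \ref{3dimensional-L0} in the right place: it is what makes $J$ intrinsic (the set of leading positions of nonzero elements of $\L_0$) and hence unique.
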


For the proof, we refer to Lemma 8 in \cite{ker25}.
We say that \emph{the basis $(L_1,L_2,L_3)$ and the subspace $\L_0$ itself is of type $\J$.}

We are going to show that $\L_0$ is functionally nonvanishing with some special exceptions.
For any $(k,l)\in\N_3\times\N_2$, let $E_{k,l}= [e_{i,j}^{k,l}]_3$ be the elementary matrix, where $e_{i,j}^{k,l} = \d_{(i,j), (k,l)}.$
(These matrices form the natural basis in $M_3[\C]$.)

Our method is the following.
The general element in $\L_0$ is of the form
\[L= z_1 L_1 +z_2L_2+z_3L_3 \quad (z_1,z_2,z_3\in\C).\]
Given any $r\in\N_3$, for every $b\in\C_0$ we consider the polynomial
\[F_b(L_{P_r})=: p_b(z_1,z_2,z_3).\]
Taking the canonical form
\[p_b(z_1,z_2,z_3)= \sum a_{k_1,k_2,k_3}(b)\ z_1^{k_1} z_2^{k_2} z_3^{k_3},\]
this polynomial is identically zero if and only if all coefficients $a_{k_1,k_2,k_3}(b)$ are equal to zero.
(Here $(k_1,k_2,k_3)$ runs through the triplets of nonnegative integers: $\Z_+^3$.)

\begin{lem}
\label{coefficient-condition}
The subspace $\L_0$ is functionally nonvanishing exactly when

\medskip
\noindent
\textup{(*)} $\quad \exists r\in\N_3, \; \forall b\in\C_0, \; \exists (k_1,k_2,k_3)\in \Z_+^3, \; a_{k_1,k_2,k_3}(b)\ne 0.$
\end{lem}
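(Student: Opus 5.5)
This is essentially an unwinding of definitions, combined with the elementary fact that a complex polynomial vanishes identically on $\C^3$ exactly when all of its coefficients vanish. By definition, $\L_0$ is functionally nonvanishing if and only if there is an $r\in\N_3$ such that for every $b\in\C_0$ some $L\in\L_0$ satisfies $F_b(L_{P_r})\ne 0$. I will show that, for fixed $r$ and $b$, the inner condition (``there is $L\in\L_0$ with $F_b(L_{P_r})\ne0$'') is equivalent to the existence of $(k_1,k_2,k_3)\in\Z_+^3$ with $a_{k_1,k_2,k_3}(b)\ne0$. Both conditions carry the same outer quantifiers $\exists r\,\forall b$, so this equivalence gives the lemma.

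First, by Proposition \ref{3dimensional-L0} and Lemma \ref{3dimensional-L0-basis}, $(L_1,L_2,L_3)$ is a basis of $\L_0$. Hence every $L\in\L_0$ can be written as $L=z_1L_1+z_2L_2+z_3L_3$, and every triple $(z_1,z_2,z_3)\in\C^3$ yields an element of $\L_0$. The map $L\mapsto L_{P_r}$ is linear. Since $F_b(Z)=\det Z+b\,\z_{1,2}\z_{2,2}$ is a homogeneous quadratic form in the entries of $Z$, the composition $(z_1,z_2,z_3)\mapsto F_b(L_{P_r})=p_b(z_1,z_2,z_3)$ is a polynomial, in fact a homogeneous quadratic one. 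Its coefficients $a_{k_1,k_2,k_3}(b)$ are computed from the entries of $L_1,L_2,L_3$ and depend affinely on $b$.

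Second, the existence of $L\in\L_0$ with $F_b(L_{P_r})\ne0$ means precisely that $p_b$ is not the zero function on $\C^3$. A polynomial over the infinite field $\C$ defines the zero function exactly when it is the zero polynomial, that is, when all its coefficients vanish. One can verify this by induction on the number of variables, using the fact that a nonzero one-variable polynomial has only finitely many roots. So $p_b\not\equiv0$ as a function is equivalent to $a_{k_1,k_2,k_3}(b)\ne0$ for some $(k_1,k_2,k_3)$, which is the inner part of (*). Putting back the quantifiers $\exists r\in\N_3,\ \forall b\in\C_0$ completes the proof.

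There is no genuine obstacle here. The only points needing care are that $(L_1,L_2,L_3)$ really spans all of $\L_0$, so that ranging over $(z_1,z_2,z_3)\in\C^3$ covers every $L\in\L_0$, and that the quantifier order $\exists r\,\forall b\,\exists$ is kept exactly as in the definition of functional nonvanishing.
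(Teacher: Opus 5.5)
Your proposal is correct and follows the same reasoning the paper gives in the paragraph preceding the lemma. Both write the general element of $\L_0$ as $z_1L_1+z_2L_2+z_3L_3$ and use the fact that $p_b$ vanishes identically exactly when all its coefficients vanish, with the quantifiers $\exists r\,\forall b$ kept in the same order.
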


\noindent
In order to carry out a systematic study we classify the indeces $J\in\J_3$:
\begin{itemize}
\item[\textup{(I)}] $\quad \hbox{card}\{i_1,i_2,i_3\}=2 \quad$ (12 cases),
\item[\textup{(II)}] $\quad (i_1,i_2,i_3)=(1,2,3)$ and $\hbox{card}\{k\in\N_3: j_k=1\}=2\quad$ (3 cases),
\item[\textup{(III)}] $\quad (i_1,i_2,i_3)= (1,2,3)$ and $\hbox{card}\{k\in\N_3: j_k=2\}=2 \quad$ (3 cases),
\item[\textup{(IV)}] $\quad (i_1,i_2,i_3)= (1,2,3)$ and $\hbox{card}\{j_1,j_2,j_3\}=1 \quad$ (2 cases).
\end{itemize}

\begin{pro}
\label{classI-3basis}
The subspace $\L_0$ is functionally nonvanishing whenever the type of $\L_0$ is of class \textup{(I)}.
\end{pro}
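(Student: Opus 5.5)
Since $\L_0$ has type $J$ of class (I), the three pivot positions $(i_1,j_1)\prec(i_2,j_2)\prec(i_3,j_3)$ lie in exactly two rows. Every pivot has $j_k\in\N_2$, so one row $s$ carries two pivots, necessarily $(s,1)$ and $(s,2)$. A second row $t\ne s$ carries the remaining pivot $(t,j_0)$, and the third row $r$ carries none. I take this $r$ in condition (*) of Lemma \ref{coefficient-condition}. Then $P_r$ consists of the rows $s$ and $t$, so $L_{P_r}$ is the $2\times 2$ block of $L=z_1L_1+z_2L_2+z_3L_3$ in rows $s,t$ and columns $1,2$. Writing $p_b(z_1,z_2,z_3)=F_b(L_{P_r})$, it suffices to show that for every $b\in\C_0$ some bilinear coefficient of $p_b$ is nonzero.

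The computation only uses the echelon properties in Lemma \ref{3dimensional-L0-basis}: $l^k_{i_m,j_m}=\d_{k,m}$, and entries preceding the pivot of $L_k$ vanish. I relabel the basis as $L_a,L_b$ (pivots $(s,1),(s,2)$) and $L_c$ (pivot $(t,j_0)$), with coefficients $z_a,z_b,z_c$. I split into the two orderings of $s$ and $t$ and the two values of $j_0$.

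\emph{Case $s<t$.} $L_c$ vanishes in row $s$, so row $s$ of $L$ is exactly $(z_a,z_b)$. Row $t$ equals $z_c$ times the row-$t$ part of $L_c$ plus terms in $z_a,z_b$. If $j_0=1$, row $t$ of $L_c$ is $(1,\gamma)$. Then $z_az_c$ has coefficient $\gamma$, and $z_bz_c$ has coefficient $-1+b\gamma$, the extra $b\gamma$ coming from $b\,\z_{1,2}\z_{2,2}$. These cannot vanish simultaneously. If $j_0=2$, row $t$ of $L_c$ is $(0,1)$ and the coefficient of $z_az_c$ is $1$.

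\emph{Case $t<s$.} Now $t$ is the first row of $L_{P_r}$. $L_a$ and $L_b$ vanish in row $t$, so row $t$ of $L$ is $z_c(u,v)$, with $(u,v)=(1,v)$ if $j_0=1$ and $(0,1)$ if $j_0=2$. Row $s$ is $(z_a+\dots,\,z_b+\dots)$, the omitted terms being multiples of $z_c$. The coefficient of $z_az_c$ is $-v$, and that of $z_bz_c$ is $u+bv$. If $j_0=2$, the first coefficient is $-1\neq 0$. If $j_0=1$, both coefficients vanishing would force $v=0$ and then $1=0$, which is impossible.

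In every case condition (*) holds with this $r$, uniformly in $b\in\C_0$, and Lemma \ref{coefficient-condition} yields that $\L_0$ is functionally nonvanishing. The only real obstacle is bookkeeping. One must justify from Lemma \ref{3dimensional-L0-basis} exactly which entries of $L_c$ in row $s$, and of $L_a,L_b$ in row $t$, vanish. One must also respect the row order $k_1<k_2$ in $P_r$, since the extra term $b\,\z_{1,2}\z_{2,2}$ is not symmetric in the two rows; this is why the two orderings are treated separately.
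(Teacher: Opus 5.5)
Your proof is correct and follows essentially the paper's argument: you take $r$ to be the pivot-free row and show that, for every $b\in\C_0$, the coefficients of $z_az_c$ and $z_bz_c$ in $F_b(L_{P_r})$ cannot both vanish, and your four subcases ($s<t$ or $t<s$, with $j_0\in\{1,2\}$) reproduce the paper's four explicit computations for rows $\{1,2\}$. Your parametrization by the ordered pair $(s,t)$ also spells out the paper's remark that the row pairs $\{1,3\}$ and $\{2,3\}$ are handled similarly. One minor point: you use the letter $b$ both for the parameter of $F_b$ and as a label in $L_b$, $z_b$, so rename one of them.
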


\noindent
{\bf Proof.}
Let us assume that $\L_0$ is of type $((1,1),(1,2),(2,1))$.
Then
\[L_1=
\left[
\begin{matrix}
1&0&0\\
0&l_{2,2}^1 & 0\\
*&*&0
\end{matrix}\right], \quad
L_2= \left[
\begin{matrix}
0&1&0\\0& l_{2,2}^2&0\\
*&*&0
\end{matrix}\right], \quad
L_3= \left[\begin{matrix}
0&0&0\\
1&l_{2,2}^3&0\\
*&*&0
\end{matrix}\right],\]
and so
\[L=z_1L_1+z_2L_2+z_3L_3=
\left[\begin{matrix}
z_1&z_2&0\\
z_3& z_1l_{2,2}^1+z_2 l_{2,2}^2+ z_3 l_{2,2}^3& 0\\
*&*&0
\end{matrix}\right].\]
Taking $P_3=(1,2)$, for every $b\in\C_0$, we have
\begin{eqnarray*}
F_b(L_{P_3}) & = & z_1(z_1l_{2,2}^1 + z_2 l_{2,2}^2 + z_3 l_{2,2}^3) - z_2z_3 + bz_2 (z_1 l_{2,2}^1 +z_2 l_{2,2}^2 +z_3 l_{2,2}^3) \\
 & = & z_1^2 l_{2,2}^1 + z_1z_2 (l_{2,2}^2 +b l_{2,2}^1)+ z_1z_3 l_{2,2}^3 + z_2^2 b l_{2,2}^2 + z_2z_3 (-1+bl_{2,2}^3)
\end{eqnarray*}
Since $a_{1,0,1}= l_{2,2}^3$ and $a_{0,1,1} =-1 +b l_{2,2}^3$ cannot be simultaneously zero, there is a nonzero coefficient.

Let us assume now that $\L_0$ is of type $((1,1), (1,2), (2,2))$.
Then
\[L_1= \left[
\begin{matrix}
1&0&0\\
l_{2,1}^1&0&0\\
*&*&0
\end{matrix}\right], \quad L_2= \left[\begin{matrix}
0&1&0\\
l_{2,1}^2&0&0\\
*&*&0
\end{matrix}\right], \quad
L_3= \left[\begin{matrix}
0&0&0\\
0&1&0\\
*&*&0
\end{matrix}\right],\]
and so
\[L=z_1L_1+z_2L_2+z_3L_3 = 
\left[\begin{matrix}
z_1 & z_2 & 0\\
z_1 l_{2,1}^1 + z_2 l_{2,1}^2& z_3&0\\
*&*&0
\end{matrix}\right].\]
Taking $P_3=(1,2)$, for every $b\in\C_0$ we have 
\begin{eqnarray*}
F_b(L_{P_3})&=& z_1z_3 - z_2 (z_1 l_{2,1}^1 + z_2 l_{2,1}^2) + b z_2z_3 \\
 &=&-z_1z_2l_{2,1}^1 + z_1z_3 - z_2^2 l_{2,1}^2 +bz_2z_3,
\end{eqnarray*}
which clearly has nonzero coefficients.

Let us assume that $\L_0$ is of type $((1,1),(2,1),(2,2))$.
Then
\[L_1=
\left[\begin{matrix}
1&l_{1,2}^1&0\\
0&0&0\\
*&*&0
\end{matrix}\right], \quad
L_2= \left[\begin{matrix}
0&0&0\\
1&0&0\\
*&*&0\end{matrix}\right], \quad
L_3= \left[\begin{matrix}
0&0&0\\
0&1&0\\
*&*&0
\end{matrix}\right],\]
and so
\[L=\left[\begin{matrix}
z_1& z_1l_{1,2}^1&0\\
z_2&z_3&0\\
*&*&0
\end{matrix}\right].\]
For every $b\in\C_0$, we have
\[F_b(L_{P_3})= z_1z_3 - z_1z_2 l_{1,2}^1 + b l_{1,2}^1 z_1z_3 = z_1z_3(1+bl_{1,2}^1)-z_1z_2l_{1,2}^1.\]
Since $1+bl_{1,2}^1$ and $l_{1,2}^1$ cannot be simultaneously zero, there is a nonzero coefficient.

Let us assume that $\L_0$ is of type $((1,2),(2,1),(2,2))$.
Then
\[L_1= \left[\begin{matrix}
0&1&0\\
0&0&0\\
*&*&0
\end{matrix}\right], \quad
L_2=
\left[\begin{matrix}
0&0&0\\
1&0&0\\
*&*&0
\end{matrix}\right], \quad
L_3= \left[\begin{matrix}
0&0&0\\
0&1&0\\
*&*&0
\end{matrix}\right],\]
and so
\[L=\left[\begin{matrix}
0&z_1&0\\
z_2&z_3&0\\
*&*&0\end{matrix}\right].\]
For every $b\in\C_0$, we have
\[F_b(L_{P_3})= -z_1z_2 + b z_1z_3,\]
which clearly has nonzero coefficient.

The other cases when $\{i_1,i_2,i_3\}=\{1,3\}$ or $\{i_1,i_2,i_3\}=\{2,3\}$ can be handled similarly, taking $r=2$ or $r=1$, respectively.

\rightline{$\square$}

In the class (II) there are exceptions.

\begin{pro}
\label{classII-3basis}
If the type $J$ of $\L_0$ is of class \textup{(II)}, then $\L_0$ is functionally nonvanishing, with the exceptions listed below.

\medskip
\textup{(a)} If $J=((1,1), (2,1),(3,2))$, then

\medskip
\noindent
\textup{(E1)} $\quad L_1= E_{1,1} + c_1 E_{1,2}, \quad L_2= E_{2,1} + c_2 E_{2,2}, \quad L_3 = E_{3,2} \quad (c_1,c_2\in\C_0).$

\medskip
\textup{(b)} If $J= ((1,1), (2,2), (3,1))$, then

\medskip
\noindent
\textup{(E2)} $\quad L_1= E_{1,1} + c_1 E_{1,2}, \quad L_2= E_{2,2}, \quad L_3= E_{3,1} + c_3 E_{3,2}\quad (c_1, c_3\in\C_0).$

\medskip
\textup{(c)}
If $J=((1,2),(2,1),(3,1))$, then

\medskip
\noindent
\textup{(E3)} $\quad L_1= E_{1,2}, \quad L_2= E_{2,1} + c_2 E_{2,2}, \quad L_3 = E_{3,1} + c_3 E_{3,2}\quad (c_2, c_3\in\C_0).$
\end{pro}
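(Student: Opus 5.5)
The plan is to run through the three types of class (II) one at a time, using the method already used in Proposition \ref{classI-3basis}. In each case I write the general element $L=z_1L_1+z_2L_2+z_3L_3$ in terms of the free entries allowed by Lemma \ref{3dimensional-L0-basis}. Then, for each $r\in\N_3$, I compute the polynomials $p_b=F_b(L_{P_r})$ and test condition (*) of Lemma \ref{coefficient-condition}. The three types are
\[((1,1),(2,1),(3,2)),\quad ((1,1),(2,2),(3,1)),\quad ((1,2),(2,1),(3,1)).\]
In each type every row $i$ carries exactly one pivot $(i,j_i)$. If $j_i=1$, the only free entry of $L_i$ is $c_i:=l^i_{i,2}$, so $L$ has $(i,1)$-entry $z_i$ and $(i,2)$-entry $c_iz_i$. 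If $j_i=2$, the $i$-th row of $L$ is $(0,z_i)$. The entries below the pivot rows may also be nonzero. For example, in type (a) the matrix $L_1$ may have entries $l^1_{2,2},l^1_{3,1},l^1_{3,2}$, and $L_2$ may have $l^2_{3,1},l^2_{3,2}$. Writing out these parameters explicitly is the first step.

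The second step is to expand $F_b(L_{P_r})$ for all three $r$. Each is a quadratic form in $(z_1,z_2,z_3)$ whose coefficients are affine in $b$. Take type (a) with $r=3$, where $F_b=\det L_{\{1,2\}}+b\,\x_{1,2}\x_{2,2}$. The relevant rows are $(z_1,\;c_1z_1)$ and $(z_2+\dots,\;c_2z_2+l^1_{2,2}z_1)$. The coefficient of $z_1z_2$ is $c_2-c_1+bc_1c_2$. Since $b\ne0$ is arbitrary, this coefficient forces a relation only when it is of the form $\alpha+\beta b$ with $\beta\ne0$, and then only for the single value $b=-\alpha/\beta$.

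Condition (*) fails exactly when, for every $r$, some $b_r\in\C_0$ kills all coefficients. So I collect the coefficients that are constant in $b$ and do not vanish unless a parameter vanishes. These show that the extra parameters, namely the entries below the pivots such as $l^1_{2,2}$, $l^1_{3,j}$ and $l^2_{3,j}$, must be zero for (*) to fail. For instance, in type (a) with $r=3$ the coefficient of $z_1^2$ is $l^1_{2,2}$, which is independent of $b$.

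With those parameters zero, the remaining coefficients reduce to expressions like $c_2-c_1+bc_1c_2$. Such a coefficient can vanish for some nonzero $b$, since $c_1\ne c_2$ is not required. It cannot vanish if exactly one of $c_1,c_2$ is zero, because then it becomes $\pm c_i\ne0$. If both are zero, other coefficients such as $z_1z_3$ or $z_2z_3$ survive. This is where the nonvanishing of $c_i$ in (E1)–(E3) should come from: if some $c_i=0$, some $r$ has a coefficient that is a nonzero constant.

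The main obstacle is the bookkeeping. There are three types, three choices of $r$, and up to about five free parameters, and I must make sure that every non-exceptional configuration produces some $r$ with a $b$-independent nonzero coefficient. I will organise it by first using $r=3$ to kill the parameters in row 2 (and row 1 where present). Then I use $r=1$ and $r=2$, where row 3 enters, to kill the row-3 parameters. The aim is to show that whenever any parameter outside (E1)–(E3) is nonzero, or some $c_i=0$, a suitable $r$ has a coefficient of the form $\alpha+\beta b$ that cannot vanish for any $b\in\C_0$, namely one with $\alpha\ne0=\beta$, or $\beta\ne0$ and $\alpha=0$.

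For any class (II) type other than (a), (b), (c), I expect the computation to look like the cases in Proposition \ref{classI-3basis}. There a coefficient equal to $\pm1$ or a pair $(l,\,-1+bl)$ always appears, and this gives functional nonvanishing directly.
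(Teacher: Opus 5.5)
Your strategy is the paper's: write the general element of $\L_0$ in the canonical basis, expand $F_b(L_{P_r})$ for each $r$, and for every configuration outside (E1)--(E3) find an $r$ and a coefficient that cannot vanish for any $b\in\C_0$. However, two concrete steps fail as written.

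First, your parametrization of type (a) is wrong. By Lemma \ref{3dimensional-L0-basis} we have $l^k_{i_r,j_r}=\delta_{k,r}$, so the pivot $(3,2)$ of $L_3=E_{3,2}$ forces $l^1_{3,2}=l^2_{3,2}=0$. The genuinely free entries are $l^1_{1,2},l^1_{2,2},l^1_{3,1}$ and $l^2_{2,2},l^2_{3,1}$. This matters. You claim the coefficient analysis forces $l^1_{3,j}$ and $l^2_{3,j}$ to vanish, but this is false for $j=2$. Adding multiples of $E_{3,2}\in\L_0$ to $L_1,L_2$ does not change the subspace, and whether $F_b(L_{P_r})$ vanishes identically depends only on $\L_0$. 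So no condition of type (*) can detect these entries; they vanish only by normalization.

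Second, the $z_1^2$ coefficient of $F_b(L_{P_3})$ in type (a) is $l^1_{2,2}(1+bc_1)$, not $l^1_{2,2}$. You dropped the contribution of $b\,\xi_{1,2}\xi_{2,2}=bc_1z_1(l^1_{2,2}z_1+c_2z_2)$. This coefficient vanishes at $b=-1/c_1$, so $r=3$ cannot kill $l^1_{2,2}$ by your single-coefficient criterion. At $r=3$ it can only be done by pairing with the $z_1z_2$ coefficient $c_2-c_1+bc_1c_2$, which equals $-c_1\neq 0$ at $b=-1/c_1$. That is an argument of the type of Lemma \ref{simultaneous-2equations}.

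The paper avoids this in (a) by starting with $r=1$. With the correct parameters,
\[F_b(L_{P_1})=-l^1_{2,2}l^1_{3,1}z_1^2-(l^1_{2,2}l^2_{3,1}+l^2_{2,2}l^1_{3,1})z_1z_2+b\,l^1_{2,2}z_1z_3-l^2_{2,2}l^2_{3,1}z_2^2+(1+b\,l^2_{2,2})z_2z_3 .\]
The argument then runs in three steps.
\begin{itemize}
\item The coefficients $b\,l^1_{2,2}$ and $1+b\,l^2_{2,2}$ force $l^1_{2,2}=0\neq l^2_{2,2}$.
\item The $b$-independent coefficients $-l^2_{2,2}l^1_{3,1}$ and $-l^2_{2,2}l^2_{3,1}$ then force $l^1_{3,1}=l^2_{3,1}=0$.
\item Finally $F_b(L_{P_2})=(1+b\,l^1_{1,2})z_1z_3$ forces $l^1_{1,2}\neq 0$, which gives (E1).
\end{itemize}

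In (b) and (c) your ordering does work. There $r=3$ gives $-c_1l^1_{2,1}z_1^2+(1+bc_1)z_1z_2$, resp.\ $b\,l^1_{2,2}z_1^2+(-1+bc_2)z_1z_2$. Then $r=1$, resp.\ $r=2$, gives the row-3 coefficients $b\,l^1_{3,2}$, $b\,l^2_{3,2}$, $-1+bc_3$.

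Two smaller points.
\begin{itemize}
\item Class (II) consists of exactly the three types (a), (b), (c), so your last paragraph is vacuous.
\item You need not analyse when $c_2-c_1+bc_1c_2$ vanishes. The proposition only asserts that failure of functional nonvanishing forces (E1)--(E3). In fact (E1) with $c_1=c_2$ is functionally nonvanishing via $r=3$, so your remark about that coefficient is off.
\end{itemize}
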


\noindent
{\bf Proof.}

(a): In that case
\[L_1= \left[\begin{matrix}
1&l_{1,2}^1&0\\
0&l_{2,2}^1&0\\
l_{3,1}^1&0&0
\end{matrix}\right], \quad
L_2= \left[\begin{matrix}
0&0&0\\
1&l_{2,2}^2&0\\
l_{3,1}^2&0&0\end{matrix}\right], \quad
L_3= \left[\begin{matrix}
0&0&0\\
0&0&0\\
0&1&0\end{matrix}\right],\]
and so
\[L=\left[\begin{matrix}
z_1& z_1l_{1,2}^1&0\\
z_2& z_1l_{2,2}^1+z_2 l_{2,2}^2&0\\
z_1l_{3,1}^1+z_2 l_{3,1}^2&z_3&0\end{matrix}\right].\]
For any $b\in\C_0$, we have
\begin{eqnarray*}
F_b(L_{P_1}) & = & z_2z_3 - (z_1l_{2,2}^1+ z_2 l_{2,2}^2) ( z_1 l_{3,1}^1+z_2 l_{3,1}^2) + b (z_1 l_{2,2}^1 + z_2 l_{2,2}^2) z_3\\
 &=& -z_1^2 l_{2,2}^1 l_{3,1}^1 - z_1z_2 (l_{2,2}^1 l_{3,1}^2 + l_{2,2}^2 l_{3,1}^1) + z_1z_3 l_{2,2}^1 b\\
 & & -l_{2,2}^2 l_{3,1}^2 z_2^2 + z_2z_3 (1+ b l_{2,2}^2)\\
 &=&  p_b(z_1,z_2,z_3).
\end{eqnarray*}
For every $b\in\C_0$, this polynomial has a nonzero coefficient if $l_{2,2}^1\ne0$ or $l_{2,2}^2=0$.
Hence we may assume that $l_{2,2}^1=0$ and $l_{2,2}^2\ne0$.
Under these conditions
\[p_b(z_1,z_2,z_3)= - z_1z_2 l_{2,2}^2 l_{3,1}^1 - z_2^2 l_{2,2}^2 l_{3,1}^2 + z_2z_3(1+ b l_{2,2}^2).\]
There is a nonzero coefficient here for every $b\in\C_0$, if $l_{3,1}^1\ne 0$ or $l_{3,1}^2\ne 0$.
So we may assume that $l_{3,1}^1= l_{3,1}^2=0$.
Then
\[F_b(L_{P_2})= z_1z_3 (1+ b l_{1,2}^1).\]
The arising coefficient is nonzero for all $b\in\C_0$, if $l_{1,2}^1=0$.
Hence, we may assume $l_{1,2}^1\ne0$ in order to exclude validity of the condition (*), and we arrive at the form (E1).

The situations (b) and (c) can be settled analogously.

\rightline{$\square$}

In the class (III) there are no exceptions.

\begin{pro}
\label{classIII-3basis}
The subspace $\L_0$ is functionally nonvanishing whenever the type of $\L_0$ is of class \textup{(III)}.
\end{pro}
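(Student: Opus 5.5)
The plan is to follow the method of Proposition \ref{classI-3basis}. Class (III) contains exactly three types:
\[((1,1),(2,2),(3,2)),\quad ((1,2),(2,1),(3,2)),\quad ((1,2),(2,2),(3,1)).\]
For each type I will first use Lemma \ref{3dimensional-L0-basis} to write down the canonical basis $L_1,L_2,L_3$. The normalizations $l^k_{i_r,j_r}=\d_{k,r}$ and the vanishing of the entries preceding $(i_k,j_k)$ leave only a few free entries, and of course the third column is zero. I then form $L=z_1L_1+z_2L_2+z_3L_3$. The aim is to find, for each type, a single $r\in\N_3$ and a monomial whose coefficient in $p_b=F_b(L_{P_r})$ is $b$ or $\pm1$. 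Such a coefficient is nonzero for every $b\in\C_0$, so condition (*) of Lemma \ref{coefficient-condition} holds.

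\emph{Type $((1,1),(2,2),(3,2))$.} Here $L_1=E_{1,1}+\alpha E_{1,2}+\beta E_{2,1}+\gamma E_{3,1}$, $L_2=E_{2,2}+\d E_{3,1}$ and $L_3=E_{3,2}$. Hence
\[L=\left[\begin{matrix} z_1&\alpha z_1&0\\ \beta z_1&z_2&0\\ \gamma z_1+\d z_2&z_3&0\end{matrix}\right].\]
Take $r=1$, so $P_1=(2,3)$. Then
\[F_b(L_{P_1})=\beta z_1z_3-z_2(\gamma z_1+\d z_2)+b z_2z_3,\]
and the coefficient of $z_2z_3$ is $b\ne0$.

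\emph{Type $((1,2),(2,1),(3,2))$.} Here
\[L=\left[\begin{matrix} 0&z_1&0\\ z_2&\alpha z_1+\varepsilon z_2&0\\ \beta z_1+\gamma z_2&z_3&0\end{matrix}\right].\]
The choice $r=1$ can fail: the coefficients of $z_2z_3$ and $z_1z_3$ are $1+b\varepsilon$ and $b\alpha$, and both vanish when $\alpha=0$ and $b=-1/\varepsilon$. So take $r=2$, $P_2=(1,3)$, instead. Then
\[F_b(L_{P_2})=-z_1(\beta z_1+\gamma z_2)+b z_1z_3,\]
and the coefficient of $z_1z_3$ is $b\ne0$.

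\emph{Type $((1,2),(2,2),(3,1))$.} Here
\[L=\left[\begin{matrix} 0&z_1&0\\ \alpha z_1&z_2&0\\ z_3&\beta z_1+\gamma z_2+\d z_3&0\end{matrix}\right].\]
The choice $r=2$ is risky, since every coefficient can vanish when $\beta=\gamma=0$ and $\d=1/b$. So take $r=3$, $P_3=(1,2)$. Then
\[F_b(L_{P_3})=-\alpha z_1^2+b z_1z_2,\]
and the coefficient of $z_1z_2$ is $b\ne0$.

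The only real obstacle is choosing the right $r$ for each type. The natural first choice can be defeated by a special value of $b$, as happened above in the second and third types. The fix is to pick $r$ so that the entry $(k_2,2)$ of $L_{P_r}$ is a bare variable or is linear in a variable with coefficient $1$, while $(k_1,2)$ is a different bare variable. Then the term $b\,\z_{1,2}\z_{2,2}$ in $F_b$ produces a monomial with coefficient exactly $b$ that no other term cancels. Once the canonical forms are written correctly, everything else is a routine expansion, and Lemma \ref{coefficient-condition} completes the proof.
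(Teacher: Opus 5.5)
Your proof is correct and is the paper's argument: you write down the same canonical bases and choose $r=1$, $r=2$, $r=3$ for the three types, exactly as the paper does. In each case the coefficient equals $b\neq 0$ for every $b$, so condition (*) of Lemma \ref{coefficient-condition} holds; your side remarks on why the other choices of $r$ can fail are accurate but not needed.
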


\noindent
{\bf Proof.}
Suppose that $\L_0$ is of type $((1,1),(2,2),(3,2))$.
Then
\[L_1=
\left[\begin{matrix}
1&l_{1,2}^1&0\\
l_{2,1}^1&0&0\\
l_{3,1}^1&0&0\end{matrix}\right], \quad
L_2= \left[\begin{matrix}
0&0&0\\
0&1&0\\
l_{3,1}^2&0&0\end{matrix}\right], \quad
L_3= \left[\begin{matrix}
0&0&0\\
0&0&0\\
0&1&0\end{matrix}\right],\]
and so
\[L=z_1L_1+z_2L_2+z_3L_3= \left[\begin{matrix}
z_1&z_1l_{1,2}^1&0\\
z_1l_{2,1}^1&z_2&0\\
z_1l_{3,1}^1+z_2l_{3,1}^2&z_3&0\end{matrix}\right].\]
For any $b\in\C_0$, we have
\begin{eqnarray*}
F_b(L_{P_1}) & = & z_1z_3 l_{2,1}^1 - z_2(z_1l_{3,1}^1+ z_2 l_{3,1}^2)+ b z_2z_3\\
  & = & -z_1z_2 l_{3,1}^1 + z_1z_3 l_{2,1}^1 - z_2^2 l_{3,1}^2 + b z_2z_3.
\end{eqnarray*}
Since $a_{0,1,1}(b)=b\ne0$, it follows that $\L_0$ is functionally nonvanishing.

Similarly, if the type of $\L_0$ is $((1,2),(2,1),(3,2))$, then for $r=2$ and for every $b\in\C_0$, the coefficient $a_{1,0,1}(b)=b\ne0$ in $F_b(L_{P_2})= p_b(z_1,z_2,z_3)$.
Finally, if the type of $\L_0$ is $((1,2),(2,2),(3,1))$, then for $r=3$ and for every $b\in\C_0$, the coefficient $a_{1,1,0}(b)=b\ne 0$ in $F_b(L_{P_3})= p_b(z_1,z_2,z_3)$.

\rightline{$\square$}

In the class (IV) there are again exceptions.

\begin{pro}
\label{classIV-3basis}
If the type $J$ of $\L_0$ is of class \textup{(IV)}, then $\L_0$ is functionally nonvanishing with the exceptions when $J=((1,1),(2,1),(3,1))$ and

\medskip
\noindent
\textup{(E4)} $\quad L_1=E_{1,1}, \quad L_2= E_{2,1}, \quad L_3=E_{3,1}$, or

\medskip
\noindent
\textup{(E5)} $\quad L_1= E_{1,1}+ c_1E_{1,2}, \quad L_2= E_{2,1}+ c_2E_{2,2}, \quad L_3= E_{3,1}+c_3E_{3,2}$

 $\quad\;\;\hbox{ with distinct}\; c_1,c_2,c_3\in\C_0$.
\end{pro}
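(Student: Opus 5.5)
We treat separately the two possible types in class (IV), namely $J=((1,2),(2,2),(3,2))$ and $J=((1,1),(2,1),(3,1))$. In each case we write down the general form of the canonical basis from Lemma \ref{3dimensional-L0-basis} and compute the polynomials $p_b=F_b(L_{P_r})$. We then use Lemma \ref{coefficient-condition}: $\L_0$ fails to be functionally nonvanishing exactly when, for every $r\in\N_3$, there is some $b_r\in\C_0$ with $p_{b_r}\equiv 0$.

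\emph{First type, $J=((1,2),(2,2),(3,2))$.} The normalization of Lemma \ref{3dimensional-L0-basis} forces $l^k_{i,j}=0$ before the pivot, and also at the other pivot positions. Hence
\[L_1=E_{1,2}+l^1_{2,1}E_{2,1}+l^1_{3,1}E_{3,1},\quad L_2=E_{2,2}+l^2_{3,1}E_{3,1},\quad L_3=E_{3,2}.\]
Consequently $L$ has first row $(0,z_1,0)$ and second row $(z_1l^1_{2,1},z_2,0)$. For $r=3$ we get
\[F_b(L_{P_3})=-(l^1_{2,1})z_1^2+b\,z_1z_2 .\]
The coefficient $a_{1,1,0}(b)=b$ never vanishes, so (*) holds and there is no exception.

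\emph{Second type, $J=((1,1),(2,1),(3,1))$.} Here the normalization leaves free exactly the second-column entries on and below the current row, and all first-column entries are pivots. Thus $L$ has rows $(z_i,m_i,0)$, where $m_1=\alpha z_1$, $m_2$ is a linear form in $z_1,z_2$, and $m_3$ is a linear form in $z_1,z_2,z_3$. For a pair $(k_1,k_2)$ we then have
\[F_b(L_{P_r})=z_{k_1}m_{k_2}-m_{k_1}z_{k_2}+b\,m_{k_1}m_{k_2}.\]
The key step is to show that if, for each pair, this vanishes identically for some $b\in\C_0$, then $m_i=c_iz_i$ for every $i$. We argue by unique factorization in $\C[z_1,z_2,z_3]$.

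If $m_{k_1}\equiv0$, the identity forces $z_{k_1}m_{k_2}\equiv0$, so $m_{k_2}\equiv0$. Symmetrically, $m_{k_2}\equiv0$ forces $m_{k_1}\equiv0$. If both are nonzero, then $m_{k_1}$ divides $z_{k_1}m_{k_2}$. Hence either $m_{k_1}$ is proportional to $z_{k_1}$, or $m_{k_1}$ is proportional to $m_{k_2}$. In the latter case, substituting back gives $m_{k_1}\mid z_{k_1}-\lambda z_{k_2}$, so $m_{k_1}$ is a multiple of $z_{k_1}-\lambda z_{k_2}$.

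We run this case analysis along the triangular structure. First, $m_1=\alpha z_1$ is already of the required form. Then the pair $(1,2)$ pins down $m_2$, and the pairs $(1,3)$ and $(2,3)$ pin down $m_3$. In each step the "mixed" alternatives, such as $m_{k_1}$ being proportional to $z_{k_1}-\lambda z_{k_2}$ with $\lambda\neq0$, are ruled out by comparing the remaining coefficients, since $m_1$ contains no $z_2$ or $z_3$ and $m_2$ contains no $z_3$. This bookkeeping is the main obstacle: one must make sure no degenerate branch survives, and I would handle it by directly comparing monomial coefficients once the divisibility alternatives are fixed.

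With $m_i=c_iz_i$ established, the polynomial reduces to
\[F_b(L_{P_r})=z_{k_1}z_{k_2}\bigl(c_{k_2}-c_{k_1}+b\,c_{k_1}c_{k_2}\bigr).\]
If $c_{k_1}=c_{k_2}=0$, this vanishes for every $b$. If exactly one of $c_{k_1},c_{k_2}$ is zero, it vanishes for no $b$. If both are nonzero, it vanishes only for $b=(c_{k_1}-c_{k_2})/(c_{k_1}c_{k_2})$, which lies in $\C_0$ precisely when $c_{k_1}\ne c_{k_2}$.

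We require a vanishing $b_r$ for all three pairs. This forces the $c_i$ to be either all zero, which gives (E4), or all nonzero and pairwise distinct, which gives (E5). Conversely, in these two cases every $F_b(L_{P_r})$ vanishes identically for a suitable $b$, so (*) fails.
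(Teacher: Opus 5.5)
Your proof is correct. For the type $((1,2),(2,2),(3,2))$ it coincides with the paper's argument. For the type $((1,1),(2,1),(3,1))$ you take a differently packaged route.

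\emph{The paper's route.} It expands $p_{3,b}$, $p_{2,b}$, $p_{1,b}$ in the entries $l^k_{i,j}$ and applies Lemma~\ref{simultaneous-2equations} three times, via the substitutions (S1)--(S3), splitting on whether $l^1_{1,2}=0$. It then uses $a_{0,1,1}(1,b)$ to obtain $l^3_{3,2}\ne l^2_{2,2}$.

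\emph{Your route.} You use the structure $F_b(L_{P_r})=z_{k_1}m_{k_2}-m_{k_1}z_{k_2}+b\,m_{k_1}m_{k_2}$ to force the diagonal shape $m_i=c_iz_i$ first. You then treat all three pairs uniformly through the scalar $c_{k_2}-c_{k_1}+bc_{k_1}c_{k_2}$.

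\emph{What each buys.} The paper's version is explicit line by line. Yours is symmetric in the three pairs and shows transparently why only ``all $c_i$ zero'' or ``all nonzero and pairwise distinct'' can survive. You also check the converse, that (E4) and (E5) really violate (*), which the paper leaves implicit.

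\emph{A simplification.} The unique-factorization argument, and the ``bookkeeping obstacle'' you anticipate, are not needed. For the two pairs containing the index $1$ (that is, $P_3$ and $P_2$) one has directly
$F_b(L_{P_r})=z_1\bigl((1+b\alpha)m_{k_2}-\alpha z_{k_2}\bigr)$.
Vanishing therefore forces $m_{k_2}=0$ if $\alpha=0$. If $\alpha\ne0$, it forces $1+b\alpha\ne0$ and $m_{k_2}=\frac{\alpha}{1+b\alpha}z_{k_2}$.

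Consequences of this factorization:
\begin{itemize}
\item $m_2$ and $m_3$ are pinned down by $P_3$ and $P_2$ alone; the pair $(2,3)$ is needed only for the final scalar condition.
\item The ``mixed'' alternative never occurs.
\item The two coefficients of the bracket are exactly the paper's $a_1(b)$ and $a_2(b)$ from Lemma~\ref{simultaneous-2equations}, so at bottom the two computations are the same.
\end{itemize}
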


In the proof we shall need the following technical lemma.

\begin{lem}
\label{simultaneous-2equations}
Given $l_1, l_2,l_3\in\C$, let us consider the functions
\[a_1(b)= l_3(1+bl_1) \quad \hbox{and} \quad a_2(b)= l_2-l_1+ b l_1l_2 \quad (b\in\C).\]
There exists $b\in\C_0$ such that $a_1(b)=a_2(b)=0$ if and only if either $l_1=l_2=l_3=0$ or
\[l_1, l_2\in\C_0, \; l_1\ne l_2\; \hbox{and}\; l_3=0.\]

The same statement holds also if $a_1(b)$ is replaced by $\wt a_1(b)= l_3(-1+bl_2)$.
\end{lem}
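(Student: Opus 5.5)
We prove Lemma \ref{simultaneous-2equations} by direct case analysis on the first factor of $a_1$. Since $a_1(b)=l_3(1+bl_1)$ vanishes exactly when $l_3=0$ or $1+bl_1=0$, we split into these two alternatives and, in each, solve $a_2(b)=0$ for some $b\in\C_0$.

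\emph{Case $l_3\ne0$.} Then $a_1(b)=0$ forces $bl_1=-1$, so $l_1\ne0$ and $b=-1/l_1\in\C_0$. Substituting into $a_2$ gives
\[a_2(-1/l_1)=l_2-l_1-l_2=-l_1\ne0.\]
Hence there is no common zero when $l_3\ne 0$.

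\emph{Case $l_3=0$.} Now $a_1\equiv0$, and we need a zero of $a_2(b)=(l_2-l_1)+bl_1l_2$ in $\C_0$.
\begin{itemize}
\item If $l_1l_2\ne0$, the unique root is $b=(l_1-l_2)/(l_1l_2)$. It lies in $\C_0$ exactly when $l_1\ne l_2$.
\item If $l_1l_2=0$, then $a_2$ is the constant $l_2-l_1$. It vanishes (for every $b$, in particular some $b\in\C_0$) iff $l_1=l_2$. Together with $l_1l_2=0$ this means $l_1=l_2=0$.
\end{itemize}
Collecting the cases: a common zero in $\C_0$ exists iff either $l_1=l_2=l_3=0$, or $l_1,l_2\in\C_0$, $l_1\ne l_2$ and $l_3=0$. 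Conversely, each of these conditions yields a zero by the computations above.

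\emph{Variant with $\wt a_1$.} Replace $a_1$ by $\wt a_1(b)=l_3(-1+bl_2)$. The case $l_3=0$ is literally the same as above. If $l_3\ne0$, then $\wt a_1(b)=0$ forces $l_2\ne0$ and $b=1/l_2$, and
\[a_2(1/l_2)=l_2-l_1+l_1=l_2\ne0.\]
So again there is no common zero, and the same characterization holds.

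The only delicate point is bookkeeping the requirement $b\ne0$ in the subcase $l_1l_2\ne0$. This is exactly what produces the condition $l_1\ne l_2$, since $b=(l_1-l_2)/(l_1l_2)$ equals $0$ precisely when $l_1=l_2$.
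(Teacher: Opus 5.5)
Your proof is correct and is essentially the paper's elementary case analysis with the elimination order reversed. The paper splits on $l_1$ and $l_2$, solves $a_2(b)=0$ for $b=(l_1-l_2)/(l_1l_2)$ and substitutes to get $a_1(b)=l_3l_1/l_2$; you split on $l_3$ and, when $l_3\ne0$, plug the root of $a_1$ into $a_2$ to get $-l_1\ne0$ (and $a_2(1/l_2)=l_2\ne0$ for the $\wt a_1$ variant, which the paper only dismisses as ``similar'').
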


\noindent
{\bf Proof.}
If $l_1=0$, then $a_1(b)=l_3$ and $a_2(b)= l_2$, and so the first option is valid.

Suppose that $l_1\ne 0$.
If $l_2=0$, then $a_2(b)=-l_1\ne0$; so we may assume that $l_2\ne 0$.
Then $l_2=l_1$ implies $a_2(b)= b l_1 l_2\ne0$; so $l_1\ne l_2$ can be assumed.
Now $a_2(b)=0$ holds exactly when
\[b= \frac{l_1-l_2}{l_1l_2}.\]
Hence
\[a_1(b)= l_3(1+bl_1)= l_3 \left(1 + \frac{l_1-l_2}{l_1l_2} l_1\right)= l_3 \frac{l_1}{l_2}\]
is zero precisely when $l_3=0$.

The  situation with $\wt a_1(b)$ can be settled similarly.

\rightline{$\square$}

\noindent
{\bf Proof of Proposition \ref{classIV-3basis}.}

The type $((1,2),(2,2),(3,2))$ can be easily handled.
Indeed
\[L_1= \left[\begin{matrix}
0&1&0\\
l_{2,1}^1&0&0\\
l_{3,1}^1&0&0\end{matrix}\right], \quad
L_2= \left[\begin{matrix}
0&0&0\\
0&1&0\\
l_{3,1}^2&0&0\end{matrix}\right], \quad L_3= \left[\begin{matrix}
0&0&0\\
0&0&0\\
0&1&0\end{matrix}\right],\]
and so
\[L= z_1L_1+z_2L_2+z_3L_3 =
\left[\begin{matrix}
0&z_1&0\\
z_1 l_{2,1}^1&z_2&0\\
z_1l_{3,1}^1+z_2l_{3,1}^2&z_3&0\end{matrix}\right].\]
For $P_3=(1,2)$ and for every $b\in\C_0$, we have
\[F_b(L_{P_3})= - z_1^2 l_{2,1}^1 + b z_1z_2.\]
Since $a_{1,1,0}(b)=b\ne 0$, it follows that $\L_0$ is functionally nonvanishing.

Let us assume now that $\L_0$ is of type $((1,1), (2,1), (3,1))$.
Then
\[L_1= \left[\begin{matrix}
1& l_{1,2}^1 &0\\
0& l_{2,2}^1 & 0\\
0& l_{3,2}^1 &0\end{matrix}\right], \quad
L_2= \left[\begin{matrix}
0&0&0\\
1& l_{2,2}^2 &0\\
0& l_{3,2}^2&0\end{matrix}\right], \quad
L_3= \left[\begin{matrix} 
0&0&0\\
0&0&0\\
1& l_{3,2}^3 & 0\end{matrix}\right],\]
and so
\[L=z_1L_1+z_2L_2+z_3L_3= \left[\begin{matrix}
z_1& z_1 l_{1,2}^1 &0\\
z_2 & z_1 l_{2,2}^1+z_2 l_{2,2}^2& 0\\
z_3& z_1 l_{3,2}^1 + z_2 l_{3,2}^2 + z_3 l_{3,2}^3 &0\end{matrix}\right].\]
Let us use the notation
\[F_b(L_{P_r})= p_{r,b}(z_1,z_2,z_3)= \sum a_{k_1,k_2,k_3}(r,b) z_1^{k_1}z_2^{k_2}z_3^{k_3}\quad (r\in \N_3, b\in\C_0).\]
Then
\begin{eqnarray*}
p_{3,b}(z_1,z_2,z_3) & = & z_1^2 l_{2,2}^1 (1+ b l_{1,2}^1) + z_1z_2 (l_{2,2}^2 - l_{1,2}^1 + b l_{1,2}^1 l_{2,2}^2),\\
p_{2,b}(z_1,z_2,z_3) & = & z_1^2 l_{3,2}^1 (1 + b l_{1,2}^1)\\
 & &  + z_1z_2 l_{3,2}^2 (1+ b l_{1,2}^1) + z_1z_3 (l_{3,2}^3 - l_{1,2}^1 + b l_{1,2}^1 l_{3,2}^3),\\
p_{1,b}(z_1,z_2,z_3) & = & z_1^2 b l_{2,2}^1 l_{3,2}^1 + z_1z_2 (l_{3,2}^1 + b l_{2,2}^1 l_{3,2}^2 + b l_{2,2}^2 l_{3,2}^1)\\
 & & + z_1z_3 l_{2,2}^1(-1 + b l_{3,2}^3) + z_2^2 l_{3,2}^2(1+ b l_{2,2}^2)\\
 & & + z_2z_3 (l_{3,2}^3 - l_{2,2}^2 + b l_{2,2}^2 l_{3,2}^3).
\end{eqnarray*}

Let us assume that $\L_0$ is not functionally nonvanishing.
In view of Lemma \ref{coefficient-condition} this means that for every $r\in\N_3$, there exists $b\in\C_0$ such that $a_{k_1,k_2,k_3}(r,b)=0$ is true for all $(k_1,k_2,k_3)\in \Z_+^3$.
We will apply Lemma \ref{simultaneous-2equations} taking the following substitutions:  
\begin{itemize}
\item[\textup{(S1)}] $\quad l_1= l_{1,2}^1, \; l_2= l_{2,2}^2, \; l_3= l_{2,2}^1; \; a_1(b)= a_{2,0,0}(3,b), \; a_2(b)= a_{1,1,0}(3,b);$
\item[\textup{(S2)}] $\quad l_1= l_{1,2}^1, \; l_2 = l_{3,2}^3, \; l_3= l_{3,2}^1; \; a_1(b)= a_{2,0,0}(2,b), \; a_2(b)= a_{1,0,1}(2,b);$
\item[\textup{(S3)}] $\quad l_1=l_{1,2}^1, \; l_2 = l_{3,2}^3, \; l_3= l_{3,2}^2; \; a_1(b)= a_{1,1,0}(2,b), \; a_2(b)= a_{1,0,1}(2,b)$.
\end{itemize}
Let us assume first that $l_{1,2}^1=0$.
Then by Lemma \ref{simultaneous-2equations},
\begin{itemize}
\item[{}] (S1) yields $\; l_{2,2}^2=l_{2,2}^1=0$,
\item[{}] (S2) yields $\; l_{3,2}^3=l_{3,2}^1=0$,
\item[{}] (S3) yields $\; l_{3,2}^3=l_{3,2}^2=0$.
\end{itemize}
Therefore, we obtain the form (E4).

\noindent
Let us assume now that $l_{1,2}^1\ne0$.
Then by Lemma \ref{simultaneous-2equations},
\begin{itemize}
\item[{}] (S1) yields $\; 0\ne l_{2,2}^2\ne l_{1,2}^1$ and $l_{2,2}^1=0$,
\item[{}] (S2) yields $\; 0\ne l_{3,2}^3\ne l_{1,2}^1$ and $l_{3,2}^1=0$,
\item[{}] (S3) yields $\; 0\ne l_{3,2}^3\ne l_{1,2}^1$ and $l_{3,2}^2=0$.
\end{itemize}
Taking into account that
\[a_{0,1,1}(1,b)= l_{3,2}^3 - l_{2,2}^2 + b l_{2,2}^2 l_{3,2}^3 = 0\]
holds with some $b\in\C_0$, we infer that $l_{3,2}^3\ne l_{2,2}^2$ must be also true.
Thus, we have obtained the form (E5).

\rightline{$\square$}

In order to study the exceptional cases we complete the basis $(L_1,L_2,L_3)$ of $\L_0$ to a basis of $\L$.

\begin{lem}
\label{6dimension-extended-basis}
Let $(L_1,L_2,L_3)$ be a canonical basis in $\L_0$ of type 
\[J=((i_1,j_1), (i_2,j_2),(i_3,j_3))\in\J_3.\]
Then there exist matrices $Q_r = [q_{i,j}^r]_3\in\L \;\; (r\in\N_3)$ such that 
\[q_{i,3}^r= \d_{i,r}\quad (i,r\in \N_3)\]
and
\[q_{i_k,j_k}^r = 0\quad (k,r\in\N_3).\]
\end{lem}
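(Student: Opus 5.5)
The plan is to use transitivity to get preliminary matrices with the right third columns, and then clear the pivot entries by subtracting suitable combinations of $L_1,L_2,L_3$ from $\L_0$. Note that $(i_k,j_k)\in\N_3\times\N_2$ for each $k$, so the pivot positions all lie in the first two columns. Subtracting elements of $\L_0=\L\cap\M_3$ therefore never disturbs the third column.

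First, since $\L$ is transitive, the map $\Lambda\co\L\to\E_3$, $X\mapsto Xe_3$, is surjective, as in the proof of Proposition \ref{3dimensional-L0}. So for each $r\in\N_3$ we may choose $Y_r=[\et^r_{i,j}]_3\in\L$ with $Y_re_3=e_r$, that is, $\et^r_{i,3}=\d_{i,r}$ for $i\in\N_3$.

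Second, we define
\[Q_r:=Y_r-\sum_{k=1}^3 \et^r_{i_k,j_k}L_k \quad (r\in\N_3).\]
Each $L_k$ lies in $\L_0\subset\L$ and has zero third column. Hence $Q_r\in\L$, and $Q_r$ still satisfies $q^r_{i,3}=\d_{i,r}$.

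Third, we evaluate $Q_r$ at a pivot position $(i_m,j_m)$. By Lemma \ref{3dimensional-L0-basis} we have $l^k_{i_m,j_m}=\d_{k,m}$, so
\[q^r_{i_m,j_m}=\et^r_{i_m,j_m}-\sum_{k=1}^3\et^r_{i_k,j_k}\d_{k,m}=\et^r_{i_m,j_m}-\et^r_{i_m,j_m}=0 .\]
This holds for all $m,r\in\N_3$, which is the required conclusion.

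There is no real obstacle here. The one point to state explicitly is that the normalization $l^k_{i_r,j_r}=\d_{k,r}$ is exactly what makes the correction a single subtraction, with no triangular back-substitution needed. The triangularity condition $l^k_{i,j}=0$ for $(i,j)\prec(i_k,j_k)$ plays no role in this argument.
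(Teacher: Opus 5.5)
Your proof is correct, and it is presumably the argument behind Lemma 10 of \cite{ker25}, which the paper cites instead of giving a proof: take preimages of $e_r$ under $X\mapsto Xe_3$, then subtract $\sum_k \eta^r_{i_k,j_k}L_k$ to clear the pivot entries without touching the third column. One remark: you do not need transitivity for surjectivity, since rank--nullity already gives $\dim\L-\dim\L_0=6-3=3$. For the same reason your argument also covers Lemma \ref{basis-completion-5dim}, where $\L$ is not assumed transitive.
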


For the proof we refer to Lemma 10 in \cite{ker25}.
The uniquely determined system $(L_1,L_2,L_3, Q_1,Q_2,Q_3)$ is called the \emph{canonical basis} in $\L$.

We enlarge the subspace $\L_0$ by one dimension.
Namely, given any $r\in\N_3$, let $\L_r$ denote the 4-dimensional subspace spanned by $\L_0$ and $Q_r$.
The general element of $\L_r$ is of the form
\[X= z_1L_1 + z_2L_2 +z_3L_3 + w_rQ_r\quad (z_1, z_2, z_3, w_r\in\C).\]
Since $\L_r = \L\cap M_{P_r}\subset M_{P_r}$, the formula in Proposition \ref{gamma-determinant} can be applied.
Hence, given any $\D= \hbox{diag}(b_1,b_2,b_3)\in\DD_3$, we have
\[\det\G_\D(X)= (-1)^{r+3} b_1b_2b_3 w_r\ F_{b_\D(r)}(X_{P_r}).\]
Given any $b\in\C_0$, we say that $\L_r$ is \emph{functionally $b$-vanishing}, if $F_b(X_{P_r})=0$ holds for every $X\in\L_r$.
On the other hand, $\L_r$ is called \emph{functionally nonvanishing}, if for every $b\in\C_0$ there exists $X\in\L_r$ such that $F_b(X_{P_r})\ne 0$.
Notice also that the polynomial $F_b(X_{P_r})= p_b(z_1,z_2,z_3,w_r)$ is not identically zero exactly when it has a nonzero coefficient.
Evidently, $p_b(z_1,z_2,z_3,w_r)$ and $w_r p_b(z_1,z_2,z_3,w_r)$ are identically zero at the same time.
Thus, in view of the previous formula we obtain the next statement.

\begin{pro}
\label{functional-conditions-6dim}
Let $r\in\N_3$ be arbitrary.
\begin{itemize}
\item[\textup{(a)}] Given any $\D\in\DD_3$, the subspace $\L_r$ is functionally $b_\D(r)$-vanishing if and only if $\L_r\in\S_3[\D]$.
\item[\textup{(b)}] The subspace $\L_r$ is functionally nonvanishing if and only if $\L_r\in\R_3^+$; whence $\L\in\R_3^+$ follows.
\end{itemize}
\end{pro}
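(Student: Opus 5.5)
The plan is to read both parts off the determinant identity stated just before the proposition. The identity applies because $\L_r=\L\cap M_{P_r}$. Fix $r\in\N_3$. Every $X\in\L_r$ can be written uniquely as $X=z_1L_1+z_2L_2+z_3L_3+w_rQ_r$. Since $(Q_r)e_3=e_r$ and $L_ke_3=0$, we get $\x_{r,3}=w_r$. Proposition \ref{gamma-determinant} then gives
\[\det\G_\D(X)=(-1)^{r+3}b_1b_2b_3\,w_r\,p_{b_\D(r)}(z_1,z_2,z_3,w_r),\]
where $p_b(z_1,z_2,z_3,w_r)=F_b(X_{P_r})$ is a polynomial in the four coordinates. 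The factor $(-1)^{r+3}b_1b_2b_3$ is nonzero because the $b_i$ are nonzero.

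For (a), $\L_r\in\S_3[\D]$ means that $\det\G_\D(X)=0$ for all $X\in\L_r$. By the identity, this says that the polynomial $w_r\,p_{b_\D(r)}$ vanishes identically on $\C^4$. The polynomial ring is an integral domain and $w_r$ is not the zero polynomial, so this holds exactly when $p_{b_\D(r)}\equiv0$. Since $p_{b_\D(r)}$ is a polynomial over the infinite field $\C$, this is the same as $F_{b_\D(r)}(X_{P_r})=0$ for every $X\in\L_r$, that is, $\L_r$ is functionally $b_\D(r)$-vanishing. This one observation is the whole content of (a): identical vanishing of $w_r p$ is equivalent to identical vanishing of $p$.

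For (b), $\L_r\in\R_3^+$ means $\L_r\notin\S_3[\D]$ for every $\D\in\DD_3$. By (a), this is equivalent to $\L_r$ not being functionally $b_\D(r)$-vanishing for any $\D\in\DD_3$. It remains to compare the two index sets: "every $b\in\C_0$" versus "$b=b_\D(r)$ with $\D\in\DD_3$". One direction is immediate, since $b_\D(r)\in\C_0$; so functionally nonvanishing implies $\L_r\in\R_3^+$.

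The converse requires that every $b\in\C_0$ arises as $b_\D(r)=1/b_{k_2}-1/b_{k_1}$ for some cyclic diagonal $\D$, and this is the only step needing a small construction. Given $b\ne0$, I would pick a nonzero $b_{k_1}$ with $1/b_{k_1}+b\ne0$ and set $b_{k_2}=(1/b_{k_1}+b)^{-1}$. This $b_{k_2}$ is nonzero, and it differs from $b_{k_1}$ because $b\ne0$. I would then choose $b_r$ nonzero and distinct from both. This shows $\{b_\D(r):\D\in\DD_3\}=\C_0$, which gives the equivalence in (b).

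Finally, $\L_r\subset\L$, and $\D$-regularity passes to larger subspaces: a witness $X\in\L_r$ with $\det\G_\D(X)\ne0$ also lies in $\L$. Hence $\L_r\in\R_3^+$ yields $\L\in\R_3^+$.
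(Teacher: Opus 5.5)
Your proposal is correct and follows the paper's own argument: apply the formula of Proposition~\ref{gamma-determinant} on $\L_r\subset M_{P_r}$ with $\x_{r,3}=w_r$, and observe that $w_r\,p_b$ and $p_b$ vanish identically at the same time. Your explicit check that $\{b_\D(r):\D\in\DD_3\}=\C_0$, which the converse in (b) requires, fills in a step the paper leaves implicit.
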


Let us consider first the exceptional cases (E1), (E2), (E3).

\begin{lem}
\label{special-Qr-cases123}
Let us assume that the subspaces $\L_1, \L_2, \L_3$ are not functionally nonvanishing.

\noindent
\textup{(a)} If \textup{(E1)} holds, then
\[Q_1= E_{1,3}+ q_1 E_{1,2}, \; Q_2= E_{2,3} + q_2 E_{2,2}, \; Q_3= E_{3,3} +q_3 E_{3,1}\; (q_1,q_2,q_3\in \C).\]
\textup{(b)} If \textup{(E2)} holds, then
\[Q_1= E_{1,3}+ q_1 E_{1,2}, \; Q_2= E_{2,3} + q_2 E_{2,1}, \; Q_3= E_{3,3} +q_3 E_{3,2}\; (q_1,q_2,q_3\in \C).\]
\textup{(c)} If \textup{(E3)} holds, then
\[Q_1= E_{1,3}+ q_1 E_{1,1}, \; Q_2= E_{2,3} + q_2 E_{2,2}, \; Q_3= E_{3,3} +q_3 E_{3,2}\; (q_1,q_2,q_3\in \C).\]
\end{lem}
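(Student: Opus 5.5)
We argue directly from the definitions, treating part (a) in detail; (b) and (c) go the same way after permuting indices. Assume \textup{(E1)}. The canonical conditions of Lemma \ref{6dimension-extended-basis} for the type $J=((1,1),(2,1),(3,2))$ say that $q^r_{1,1}=q^r_{2,1}=q^r_{3,2}=0$ and $q^r_{i,3}=\d_{i,r}$. Hence each $Q_r$ has at most three free entries, at $(1,2)$, $(2,2)$ and $(3,1)$. The assertion to prove is that $Q_1$, $Q_2$, $Q_3$ keep only the entry at $(1,2)$, $(2,2)$, $(3,1)$, respectively.

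For each $r\in\N_3$, the hypothesis that $\L_r$ is not functionally nonvanishing provides some $b\in\C_0$ for which $p_b(z_1,z_2,z_3,w_r)=F_b(X_{P_r})$ vanishes identically, where
\[X=z_1L_1+z_2L_2+z_3L_3+w_rQ_r.\]
So I write out the $2\times 2$ submatrix $X_{P_r}$ explicitly, expand $F_b(X_{P_r})$, and read off the coefficients. One coefficient involving only the $z$'s fixes $b$. The mixed coefficients involving $w_r$ then force the unwanted entries of $Q_r$ to vanish.

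For $r=1$ we have $P_1=(2,3)$, and
\[F_b(X_{P_1})=z_2z_3-(c_2z_2+w q^1_{2,2})\,w q^1_{3,1}+b(c_2z_2+wq^1_{2,2})z_3 .\]
The $z_2z_3$ coefficient $1+bc_2$ must vanish, so $b=-1/c_2\neq0$. The $z_2w$ coefficient $-c_2q^1_{3,1}$ then gives $q^1_{3,1}=0$, and the $wz_3$ coefficient $bq^1_{2,2}$ gives $q^1_{2,2}=0$.

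For $r=2$ we have $P_2=(1,3)$. The same pattern applies: the $z_1z_3$ coefficient gives $1+bc_1=0$, the $z_1w$ coefficient $-c_1q^2_{3,1}$ gives $q^2_{3,1}=0$, and the $wz_3$ coefficient $bq^2_{1,2}$ gives $q^2_{1,2}=0$.

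The main obstacle is $r=3$, with $P_3=(1,2)$. Here no single coefficient determines $b$ by itself. The coefficients are
\[z_1z_2:\ c_2-c_1+bc_1c_2,\quad z_1w:\ q^3_{2,2}(1+bc_1),\quad z_2w:\ q^3_{1,2}(-1+bc_2),\quad w^2:\ bq^3_{1,2}q^3_{2,2}.\]
I would handle this with the computation behind Lemma \ref{simultaneous-2equations}. If $c_1=c_2$, the $z_1z_2$ coefficient equals $bc_1^2\ne0$, contradicting the hypothesis, so $c_1\neq c_2$. Then $b=(c_1-c_2)/(c_1c_2)$ is forced. This gives
\[1+bc_1=\frac{c_1}{c_2}\neq 0,\qquad -1+bc_2=-\frac{c_2}{c_1}\neq0,\]
and therefore $q^3_{2,2}=q^3_{1,2}=0$.

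Combining the three cases gives exactly the stated forms of $Q_1,Q_2,Q_3$. Cases (b) and (c) follow by the analogous coefficient bookkeeping for the types of \textup{(E2)} and \textup{(E3)}. In each of them, the one index $r$ whose submatrix $X_{P_r}$ contains two rows with nonzero $c$'s plays the role of the delicate case $r=3$ above.
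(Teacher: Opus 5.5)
Your proposal is correct and takes essentially the paper's route. You expand $F_b(X_{P_r})$ for the general element of each $\L_r$ and use $b\in\C_0$ and $c_1,c_2\in\C_0$ to kill the unwanted entries for $r=1,2$; for the delicate case $r=3$ you carry out inline the computation that the paper obtains by citing Lemma \ref{simultaneous-2equations}. Your coefficients agree with the paper's, and your treatment of (b) and (c) by permuting rows matches the paper's remark.
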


\noindent
{\bf Proof.}
Assume that (E1) holds, that is
\[L_1= E_{1,1} + c_1 E_{1,2},\; L_2 = E_{2,1} + c_2 E_{2,2},\; L_3 = E_{3,2}\; \;(c_1,c_2\in\C_0).\]
By Lemma \ref{6dimension-extended-basis}, $Q_1$ is of the form
\[Q_1=\left[\begin{matrix}
0& q_{1,2}^1&1\\
0& q_{2,2}^1& 0\\
q_{3,1}^1&0&0\end{matrix}\right].\]
The general element of $\L_1$ is of the form
\[X= z_1L_1+z_2L_2+z_3L_3+w_1Q_1=
\left[\begin{matrix}
z_1 & z_1c_1+w_1q_{1,2}^1& w_1\\
z_2& z_2c_2 + w_1q_{2,2}^1&0\\
w_1 q_{3,1}^1& z_3&0\end{matrix}\right].\]
For any $b\in\C_0$, we have
\[F_b(X_{P_1})= z_2z_3(1+bc_2)- z_2w_1 c_2 q_{3,1}^1 + z_3w_1 b q_{2,2}^1 - w_1^2 q_{2,2}^1 q_{3,1}^1.\]
Since $\L_1$ is not functionally nonvanishing, this polynomial is identically zero for some $b\in\C_0$.
Hence we infer that $q_{3,1}^1 = q_{2,2}^1=0$.

The general element of $\L_2$ is of the form
\[X=
\left[\begin{matrix}
z_1& c_1z_1 + w_2 q_{1,2}^2& 0\\
z_2& c_2z_2 + w_2 q_{2,2}^2& w_2\\
w_2q_{3,1}^2&z_3&0\end{matrix}\right].\]
For any $b\in\C_0$, we have
\[ F_b(X_{P_2})= z_1z_3(1+bc_1)-z_1w_2 c_1q_{3,1}^2 + z_3w_2 b q_{1,2}^2 - w_2^2 q_{1,2}^2 q_{3,1}^2.\]
Since $\L_2$ is not functionally nonvanishing, we infer that $q_{3,1}^2= q_{1,2}^2=0$.

The general element of $\L_3$ is of the form
\[X=\left[\begin{matrix}
z_1& c_1z_1+w_3 q_{1,2}^3& 0\\
z_2& c_2z_2+w_3q_{2,2}^3&0\\
w_3q_{3,1}^3& z_3&w_3\end{matrix}\right].\]
For any $b\in\C_0$, we have
\begin{eqnarray*} F_b(X_{P_3})&=& z_1z_2(c_2-c_1+bc_1c_2)+ z_1w_3 q_{2,2}^3 (1+bc_1)\\
 & & + z_2w_3 q_{1,2}^3 (-1+bc_2)+ w_3^2b q_{1,2}^3 q_{2,2}^3.
\end{eqnarray*}
Since $\L_3$ is not functionally nonvanishing, all coefficients here must be zero for some $b\in\C_0$.
Applying Lemma \ref{simultaneous-2equations} with the substitutions
\[l_2=c_2,\; l_1=c_1,\; l_3=q_{2,2}^3;\; a_1(b)= a_{1,0,0,1}(b),\; a_2(b)=a_{1,1,0,0}(b)\]
and
\[ l_2=c_2, \; l_1=c_1,\; l_3= q_{1,2}^3;\; \wt a_1(b)= a_{0,1,0,1}(b), \; a_2(b)= a_{1,1,0,0}(b)\]
we obtain $q_{2,2}^3=0$ and $q_{1,2}^3=0$, respectively.

The exceptional cases (E2) and (E3) can be handled similarly. 
(They differ from (E1) only in the order of rows.)

\rightline{$\square$}

\begin{pro}
\label{regularity-cases123}
In the exceptional cases \textup{(E1), (E2), (E3)} we have $\L\in\R_3^+$.
\end{pro}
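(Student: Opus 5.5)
Our aim is to show $\L\in\R_3^+$ in the cases (E1), (E2), (E3). We argue by contradiction using Proposition \ref{functional-conditions-6dim}(b): if some $\L_r$ is functionally nonvanishing, then $\L\in\R_3^+$ and we are done. So assume none of $\L_1,\L_2,\L_3$ is functionally nonvanishing. Lemma \ref{special-Qr-cases123} then gives the explicit canonical basis $(L_1,L_2,L_3,Q_1,Q_2,Q_3)$ of $\L$, with only the parameters $c_i\in\C_0$ and $q_1,q_2,q_3\in\C$ free. We treat (E1) in detail; (E2) and (E3) differ only by a permutation of rows and are handled in the same way.

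In case (E1), every element of $\L$ has the form
\[X=z_1L_1+z_2L_2+z_3L_3+w_1Q_1+w_2Q_2+w_3Q_3=\left[\begin{matrix}
z_1&c_1z_1+q_1w_1&w_1\\
z_2&c_2z_2+q_2w_2&w_2\\
q_3w_3&z_3&w_3\end{matrix}\right].\]
The key observation is that this $\L$ is \emph{not transitive}. Since $\L$ is assumed transitive (we are in Section \ref{6dimensional-transitive}), this contradiction shows that the assumption fails, so some $\L_r$ is functionally nonvanishing. To see the failure of transitivity, take $x=e_1-c_1^{-1}... $ more precisely, look for $x=(x_1,x_2,0)^T\ne0$ with $Xx$ confined to a proper subspace. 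We have
\[Xx=\bigl(z_1x_1+(c_1z_1+q_1w_1)x_2,\; z_2x_1+(c_2z_2+q_2w_2)x_2,\; q_3w_3x_1+z_3x_2\bigr)^T.\]
With $x_2\ne0$ the $z_3$ term fills the third coordinate and the second coordinate is free. The first coordinate equals $z_1(x_1+c_1x_2)+q_1w_1x_2$. If $q_1=0$, choose $x=(-c_1,1,0)^T$; then the first coordinate of $Xx$ vanishes for every $X\in\L$, so $\L x\ne\E_3$.

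If $q_1\ne0$, this particular vector does not work, and a more careful argument is needed. In that case we return to the determinant directly. We would compute $\det\G_\D(X)$ for the general $X$ above, as a polynomial in $(z,w)$, and exhibit a monomial whose coefficient is nonzero for every $\D\in\DD_3$. The candidate is the monomial $z_2z_3w_1$ (or $z_1z_3w_2$), whose coefficient, by the computations in the proof of Lemma \ref{special-Qr-cases123}, should be a multiple of $(1+b_\D(1)c_2)$ plus terms coming from $q_1$. Since $\G_\D$ is linear and all entries of $X$ are explicit, this computation is routine. Nonvanishing of that coefficient for a given $\D$ gives $\L\in\R_3[\D]$.

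The main obstacle is to guarantee nonvanishing simultaneously for all $\D$, because individual coefficients may vanish for specific $b_i$. The plan is to use two monomials whose coefficients cannot vanish at the same time, in the spirit of Lemma \ref{simultaneous-2equations}. A natural pair is one coefficient involving $b_1b_2b_3$ times a nonzero $c_i$ and another involving a different combination, so that simultaneous vanishing would force $b_k=b_l$ for some $k\ne l$ or $c_i=0$, both of which are excluded.
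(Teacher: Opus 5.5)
You set up the right framework, and it is the paper's: assume none of $\L_1,\L_2,\L_3$ is functionally nonvanishing, use Lemma \ref{special-Qr-cases123} to pin down $Q_1,Q_2,Q_3$, contradict the standing transitivity of $\L$, and finish with Proposition \ref{functional-conditions-6dim}(b). Your subcase $q_1=0$ is also correct.

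The gap is the subcase $q_1\neq0$, which you leave to an unexecuted determinant computation. It only looked hard because you restricted to test vectors with third coordinate $0$. The first row of every $X\in\L$ is $z_1(1,c_1,0)+w_1(0,q_1,1)$. So $x=(-c_1,1,-q_1)^T$ kills the first coordinate of $Xx$ for all $X\in\L$, whatever $q_1$ is. The paper uses the third row $(q_3w_3,z_3,w_3)$ instead: with $x=(1,0,-q_3)^T\neq0$ one has $\la Xx,e_3\ra=0$ for all $X\in\L$. This is its choice $\et_1=\et_2=0\neq\et_3$ in the system (1)--(6). Either vector gives $\L\notin\T_3$ for all parameter values, which is the contradiction you need.

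Your fallback for $q_1\neq0$ is not just unfinished; it cannot work. It tries to prove $\L\in\R_3^+$ for the explicit $\L$ of Lemma \ref{special-Qr-cases123} without using transitivity, and for that $\L$ the conclusion can fail even when $q_1\neq0$.

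Take (E1) with $c_1=1$, $c_2=2$, $q_1=q_2=2$, $q_3=0$, and $\D=\hbox{diag}(-1,-2/3,-1/2)\in\DD_3$. A direct computation shows that every row of $\G_\D(X)$ is annihilated by $m=(1,-2,4)^T$. That is, $\G_\D(X)m=0$ for every $X\in\L$. Hence $\L\in\S_3[\D]$, and every coefficient of $\det\G_\D(X)$ vanishes for this $\D$. For instance, your candidate coefficient of $z_2z_3w_1$ is exactly $b_1b_2b_3(1+b_\D(1)c_2)$, with no contribution from $q_1$, and it is zero here because $b_\D(1)=-1/2$.

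So no pair of monomials can have coefficients that are nonzero for every $\D$. The proposition holds only because such an $\L$ is not transitive, and the non-transitivity argument above is the step you are missing.
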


\noindent
{\bf Proof.}
We may assume that (E1) holds, the other two cases can be settled similarly.

Let us assume that the subspaces $\L_1, \L_2, \L_3$ are not functionally nonvanishing.
We infer by Lemma \ref{special-Qr-cases123} that
\[Q_1= E_{1,3}+ q_1 E_{1,2}, \; Q_2= E_{2,3}+q_2 E_{2,2}, \; Q_3 = E_{3,3} + q_3 E_{3,1}\; (q_1, q_2, q_3\in\C).\]
We recall that transitivity of $\L$ means that for every nonzero $y= \ol\et_1 e_1 + \ol\et_2 e_2 + \ol\et_3 e_3$, the system of equations
\[\la L_kx,y\ra =0 \; \;  (k\in\N_3), \quad \la Q_rx,y\ra=0\;\; (r\in\N_3)\]
has only the trivial zero solution for the vector $x= \x_1e_1 +\x_2 e_2 + \x_3 e_3$.
The special forms of these matrices yield
\[L_1x= (\x_1+c_1\x_2)e_1, \quad L_2x= (\x_1+c_2\x_2)e_2, \quad L_3x= \x_2 e_3,\]
\[Q_1x= (q_1\x_2+\x_3)e_1, \quad Q_2x= (q_2\x_2+\x_3)e_2, \quad Q_3x= (q_3\x_1+\x_3)e_3.\]
Thus, the resulting system of equations takes the form
\[\textup{(1)}\; (\x_1+c_1\x_2)\et_1=0, \quad \textup{(2)}\; (\x_1+c_2\x_2)\et_2=0, \quad \textup{(3)}\; \x_2\et_3=0,\]
\[\textup{(4)}\; (q_1\x_2+\x_3)\et_1=0,\quad \textup{(5)}\; (q_2\x_2+\x_3)\et_2=0, \quad \textup{(6)}\; (q_3\x_1+\x_3)\et_3=0.\]
Choosing $\et_1=\et_2=0$ and $\et_3\ne0$, the equations (1), (2), (4), (5) hold for every $x$.
On the other hand, from (3) and (6) we obtain $\x_2=0$ and $q_3\x_1+\x_3=0$.
Since these have nonzero solutions, the subspace $\L$ is not transitive, what contradicts our standing assumption.
Thus $\L_r$ must be fuctionally nonvanishing for some $r\in\N_3$, and so $\L\in\R_3^+$ is true by Proposition \ref{functional-conditions-6dim}.

\rightline{$\square$}

Now we turn to the exceptional cases (E4) and (E5).

\begin{lem}
\label{special-Qr-cases45}
Assume that the subspaces $\L_1, \L_2, \L_3$ are not functionally nonvanishing.
If \textup{(E4)} or \textup{(E5)} hold, then
\[Q_r =E_{r,3} + q_r E_{r,2}\quad (q_r\in\C, r\in\N_3).\]
\end{lem}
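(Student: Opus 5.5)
The plan is to reduce the shape of each $Q_r$ using the type $J=((1,1),(2,1),(3,1))$, and then to kill the remaining off-diagonal entries by applying Lemma \ref{simultaneous-2equations} to the polynomial $F_b(X_{P_r})$ on $\L_r$.

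\emph{Shape of $Q_r$.} In both (E4) and (E5) the type is $J=((1,1),(2,1),(3,1))$. Lemma \ref{6dimension-extended-basis} then forces $q_{i,1}^r=0$ for all $i\in\N_3$ and $q_{i,3}^r=\d_{i,r}$. Hence
\[Q_r= E_{r,3}+\sum_{i=1}^3 q_{i,2}^r E_{i,2}\quad (r\in\N_3),\]
and it remains to show $q_{i,2}^r=0$ for $i\ne r$. The diagonal entry $q_r:=q_{r,2}^r$ stays free.

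\emph{Case $r=1$.} I treat (E4) and (E5) together by writing $L_k=E_{k,1}+c_kE_{k,2}$, with $c_1=c_2=c_3=0$ in case (E4) and distinct $c_1,c_2,c_3\in\C_0$ in case (E5). The general element of $\L_1$ is
\[X=z_1L_1+z_2L_2+z_3L_3+w_1Q_1.\]
Its first column is $(z_1,z_2,z_3)$. Its second column is
\[(c_1z_1+w_1q_{1,2}^1,\; c_2z_2+w_1q_{2,2}^1,\; c_3z_3+w_1q_{3,2}^1).\]
With $P_1=(2,3)$, a direct expansion gives
\begin{eqnarray*}
F_b(X_{P_1}) &=& z_2z_3(c_3-c_2+bc_2c_3)+z_2w_1\,q_{3,2}^1(1+bc_2)\\
 & & +z_3w_1\,q_{2,2}^1(-1+bc_3)+w_1^2\,b\,q_{2,2}^1q_{3,2}^1.
\end{eqnarray*}
Since $\L_1$ is not functionally nonvanishing, there is a $b\in\C_0$ for which all these coefficients vanish.

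\emph{Applying Lemma \ref{simultaneous-2equations}.} First take $l_1=c_2$, $l_2=c_3$, $l_3=q_{3,2}^1$, with $a_1(b)=a_{0,1,0,1}(b)$ and $a_2(b)=a_{0,1,1,0}(b)$.
\begin{itemize}
\item[{}] In case (E4) we have $l_1=l_2=0$, so the lemma leaves only the option $l_1=l_2=l_3=0$.
\item[{}] In case (E5) we have $l_1,l_2\in\C_0$ distinct, so the lemma forces $l_3=0$.
\end{itemize}
In both cases $q_{3,2}^1=0$. Next use the variant with $\wt a_1(b)=l_3(-1+bl_2)$ and $l_3=q_{2,2}^1$, which gives $q_{2,2}^1=0$ in the same way. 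Hence $Q_1=E_{1,3}+q_1E_{1,2}$.

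\emph{Cases $r=2,3$ and the main check.} These are identical after permuting rows: use $P_2=(1,3)$ or $P_3=(1,2)$, with the pair $(c_1,c_3)$ or $(c_1,c_2)$ in place of $(c_2,c_3)$. This works because (E4) and (E5) are symmetric under row permutations. The only genuinely delicate step is the coefficient bookkeeping in $F_b(X_{P_r})$. Specifically, one must confirm three things:
\begin{itemize}
\item[{}] the cross coefficients have exactly the factorized forms $l_3(1+bl_1)$ and $l_3(-1+bl_2)$;
\item[{}] the $z_{k_1}z_{k_2}$ coefficient is $l_2-l_1+bl_1l_2$;
\item[{}] the ordering $k_1<k_2$ matches the sign conventions of Lemma \ref{simultaneous-2equations}.
\end{itemize}
Once these match, the lemma settles both the degenerate case (E4) and the case (E5) uniformly.
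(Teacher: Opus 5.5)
Your proposal is correct and matches the paper's proof: the type $((1,1),(2,1),(3,1))$ kills the first column of each $Q_r$, your expansion of $F_b(X_{P_1})$ is exactly the paper's, and Lemma \ref{simultaneous-2equations} (with $a_1$ and then with $\wt a_1$) forces $q_{3,2}^1=q_{2,2}^1=0$, with $r=2,3$ handled by symmetry. The only cosmetic difference is that you treat (E4) as the degenerate case $c_1=c_2=c_3=0$ of the same computation and run it through the lemma, whereas the paper reads (E4) off directly from the simpler polynomial $z_2w_1q_{3,2}^1-z_3w_1q_{2,2}^1+w_1^2bq_{2,2}^1q_{3,2}^1$.
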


\noindent
{\bf Proof.}
In view of symmetry it is enough to deal with the case $r=1$.

Let us assume first that (E4) holds.
By Lemma \ref{6dimension-extended-basis}, $Q_1$ is of the form
\[Q_1=\left[\begin{matrix}
0& q_{1,2}^1&1\\
0&q_{2,2}^1&0\\
0& q_{3,2}^1&0\end{matrix}\right].\]
The general element of $\L_1$ is of the form
\[X=z_1L_1+z_2L_2+z_3L_3+w_1Q_1=
\left[\begin{matrix}
z_1 & w_1q_{1,2}^1& w_1\\
z_2& w_1 q_{2,2}^1&0\\
z_3& w_1q_{3,2}^1&0\end{matrix}\right].\]
For any $b\in\C_0$, we have
\[F_b(X_{P_1})= z_2 w_1 q_{3,2}^1 - z_3w_1 q_{2,2}^1 + w_1^2 b q_{2,2}^1 q_{3,2}^1.\]
Since $\L_1$ is not functionally nonvanishing, we infer that $q_{3,2}^1= q_{2,2}^1=0$.

Let us assume now that (E5) holds.
The general element of $\L_1$ is of the form
\[X= \left[\begin{matrix}
z_1& z_1c_1 + w_1 q_{1,2}^1& w_1\\
z_2& z_2c_2+ w_1q_{2,2}^1& 0\\
z_3& z_3c_3+ w_1 q_{3,2}^1&0\end{matrix}\right].\]
For any $b\in\C_0$, we have
\begin{eqnarray*}
F_b(X_{P_1})&=& z_2z_3 (c_3-c_2 +b c_2c_3) + z_2 w_1 q_{3,2}^1 (1+ b c_2)\\
 & & + z_3 w_1 q_{2,2}^1(-1+ bc_3) + w_1^2 b q_{2,2}^1 q_{3,2}^1.
\end{eqnarray*}Tking into accounts that $c_2, c_3$ are distinc nonzero numbers, we infer by Lemma \ref{simultaneous-2equations} that $q_{3,2}^1= q_{2,2}^1=0$.

\rightline{$\square$}

\begin{pro}
\label{regularity-cases45}
In the exceptional cases \textup{(E4)} and \textup{(E5)} we have $\L\in\R_3^+$.
\end{pro}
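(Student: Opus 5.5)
The argument mirrors the proof of Proposition \ref{regularity-cases123}. Suppose, to reach a contradiction, that none of $\L_1,\L_2,\L_3$ is functionally nonvanishing. Then Lemma \ref{special-Qr-cases45} gives $Q_r=E_{r,3}+q_rE_{r,2}$ for $r\in\N_3$. In case (E4) we also have $L_r=E_{r,1}$; in case (E5) we have $L_r=E_{r,1}+c_rE_{r,2}$. Case (E4) is just (E5) with $c_r=0$, so we treat both together by writing $c_r$ for the coefficients and allowing $c_r=0$. Every basis element of $\L$ then maps into a single coordinate line. For $x=\x_1e_1+\x_2e_2+\x_3e_3$,
\[L_rx=(\x_1+c_r\x_2)e_r,\qquad Q_rx=(q_r\x_2+\x_3)e_r\qquad (r\in\N_3).\]
Consequently $\L$ consists exactly of the matrices whose $r$-th row lies in $\hbox{span}\{(1,c_r,0),(0,q_r,1)\}$, for each $r$.

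Now test transitivity as in the earlier proof. Take $y$ with $\et_2=\et_3=0$ and $\et_1\ne0$. The orthogonality conditions $\la Xx,y\ra=0$ for all $X\in\L$ then reduce to the two linear equations
\[\x_1+c_1\x_2=0,\qquad q_1\x_2+\x_3=0.\]
These are two homogeneous equations in three unknowns, so they have a nonzero solution, for instance $x=(-c_1,1,-q_1)$. This $x\ne0$ satisfies $\la Xx,e_1\ra=0$ for every $X\in\L$, so $\L x\ne\E_3$. Hence $\L$ is not transitive, which contradicts the standing assumption $\L\in\T_3$.

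Therefore some $\L_r$ must be functionally nonvanishing. Proposition \ref{functional-conditions-6dim}(b) then yields $\L_r\in\R_3^+$, and hence $\L\in\R_3^+$. The only real work is already contained in Lemma \ref{special-Qr-cases45}; the step left here is to check that the row-by-row structure forces a nontrivial kernel in the first row. That is immediate, because each row is constrained to a 2-dimensional subspace of $\C^3$.
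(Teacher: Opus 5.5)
Your proof is correct and is essentially the paper's argument. You assume no $\L_r$ is functionally nonvanishing and use Lemma \ref{special-Qr-cases45} to get $Q_r=E_{r,3}+q_rE_{r,2}$. You then produce a nonzero $x$ killed by one fixed row of every element of $\L$, which contradicts transitivity, so Proposition \ref{functional-conditions-6dim}(b) gives $\L\in\R_3^+$. Testing against $e_1$ instead of the paper's $e_3$, and treating (E4) as the case $c_r=0$ of (E5), are only cosmetic changes.
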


\noindent
{\bf Proof.}
Let us assume that the subspaces $\L_1, \L_2, \L_3$ are not functionally nonvanishing.
We infer by Lemma \ref{special-Qr-cases45} that $Q_r= E_{r,3}+ q_r E_{r,2} \; (q_r\in\C)$ holds, for every $r\in\N_3$.
Now we check the system of equations testing transitivity of $\L$.

If (E4) holds then this system takes the form
\[\textup{(1)}\; \x_1\et_1=0, \quad \textup{(2)}\; \x_1\et_2=0, \quad \textup{(3)} \; \x_1\et_3=0,\]
\[ \textup{(4)}\; (q_1\x_2+\x_3)\et_1=0, \quad \textup{(5)}\; (q_2\x_2+\x_3)\et_2=0, \quad \textup{(6)} \; (q_3\x_2+\x_3)\et_3=0.\]Choosing $\et_1=\et_2=0$ and $\et_3\ne 0$, this system reduces to
\[\x_1=0, \quad q_3 \x_2+\x_3=0,\]
which has nonzero solutions for $x$.
Hence $\L$ is not transitive.

If (E5) holds then the corresponding system is
\[\textup{(1)}\; (\x_1+c_1\x_2)\et_1=0, \quad \textup{(2)}\; (\x_1+ c_2\x_2)\et_2=0, \quad \textup{(3)}\; (\x_1+c_3\x_2)\et_3=0,\]
\[\textup{(4)}\; (q_1\x_2+\x_3)\et_1=0, \quad \textup{(5)}\; (q_2\x_2+\x_3)\et_2=0, \quad \textup{(6)}\; (q_3\x_2+\x_3)\et_3=0.\]
Choosing $\et_1=\et_2=0$ and $\et_3\ne0$, this system reduces to
\[\x_1+c_3\x_2=0, \quad q_3\x_2 + \x_3=0,\]
which has nonzero solutions for $x$.

Therefore, in both cases we conclude that $\L$ is not transitive, what contradicts our standing assumption.
Consequently, $\L_r$ must be functionally nonvanishing for some $r\in\N_3$, and so $\L\in\R_3^+$ is true by Proposition \ref{functional-conditions-6dim}.

\rightline{$\square$}

We have arrived at the main result of this section.

\begin{thm}
\label{regularity-6dimension}
If $\L\in\T_3$ and $\dim\L=6$, then $\L\in\R_3^+$ is true.
\end{thm}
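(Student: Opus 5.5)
The theorem is obtained by assembling the case analysis of this section. Let $\L\in\T_3$ with $\dim\L=6$. By Proposition \ref{3dimensional-L0} the subspace $\L_0=\L\cap\M_3$ is $3$-dimensional. Lemma \ref{3dimensional-L0-basis} then supplies a canonical basis $(L_1,L_2,L_3)$ of some type $J\in\J_3$. The types $J$ are exhausted by the classes (I)--(IV): either the row indices $i_1,i_2,i_3$ take exactly two values, or they are $(1,2,3)$. In the latter case the column indices $j_k\in\N_2$ either contain two $1$'s, contain two $2$'s, or are all equal. There is no class with a single row index, since each row carries at most two positions $(i,1),(i,2)$.

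If $\L_0$ is functionally nonvanishing, Corollary \ref{strong-regularity-condition} applies directly, because $\L$ is transitive, and gives $\L\in\R_3^+$. By Propositions \ref{classI-3basis}, \ref{classII-3basis}, \ref{classIII-3basis} and \ref{classIV-3basis}, this covers every type except the exceptional configurations (E1)--(E5).

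For (E1), (E2), (E3), Proposition \ref{regularity-cases123} gives $\L\in\R_3^+$. For (E4) and (E5), Proposition \ref{regularity-cases45} gives the same conclusion. These propositions rest on Lemmas \ref{special-Qr-cases123} and \ref{special-Qr-cases45} together with the transitivity test. Both show that if none of the enlarged subspaces $\L_r$ is functionally nonvanishing, then $\L$ fails to be transitive. Hence some $\L_r$ is functionally nonvanishing, and Proposition \ref{functional-conditions-6dim}(b) yields $\L\in\R_3^+$. This completes the case distinction.

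The only step needing care is the verification that (I)--(IV) really exhaust $\J_3$. Here $\J_3$ has $\binom{6}{3}=20$ elements, and the counts $12+3+3+2=20$ confirm it. The remaining work is bookkeeping: each exceptional case is matched to the proposition that handles it. The analogous arguments stated as ``similar'' in the earlier propositions (the other row-pairs in class (I), and cases (b), (c) of class (II)) are taken as established, as the paper does.
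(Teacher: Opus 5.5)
Your proposal is correct and follows the paper's own proof. A functionally nonvanishing $\L_0$ together with transitivity gives $\L\in\R_3^+$ via Corollary \ref{strong-regularity-condition}; the class (I)--(IV) propositions reduce everything else to (E1)--(E5); and these exceptions are handled by Propositions \ref{regularity-cases123} and \ref{regularity-cases45} through Proposition \ref{functional-conditions-6dim}(b), while your added check that (I)--(IV) exhaust the $20$ types in $\J_3$ is correct bookkeeping the paper leaves implicit.
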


\noindent
{\bf Proof.}
Summing up the previous results, recall that if $\L\in\T_3$ and $\L_0$ is functionally nonvanishing, then $\L\in\R_3^+$ holds by Corollary \ref{strong-regularity-condition}.
Propositions \ref{classI-3basis}, \ref{classII-3basis}, \ref{classIII-3basis}, \ref{classIV-3basis} tell us that $\L_0$ is functionally nonvanishing with the exceptions (E1)--(E5).
By Proposition \ref{functional-conditions-6dim}, if $\L_r$ is functionally nonvanishing for some $r\in\N_3$, then $\L\in\R_3^+$.
Relying on this statement it is shown in Propositions \ref{regularity-cases123}, \ref{regularity-cases45} that $\L\in\R_3^+$ holds also in the exceptional cases (E1)--(E5).

\rightline{$\square$}

\section{5-dimensional transitive subspaces}
\label{5dim-transitive}

\emph{Let us be given $\L\in\T_3$ with $\dim\L=5$.}
We are going to follow the strategy, which was applied in 6-dimension.
We omit details if they are trivial modifications of the 6-dimensional case.

The 2-dimensional subspace $\L_0=\L\cap\M_3$ plays significant role in this situation too.
Recall from Section \ref{subspaces-of-matrices} (see Proposition \ref{regularity-condition}):

$\L_0$ is functionally $\D$-vanishing: $\fa r\in\N_3,\; \fa L\in\L_0, \; F_{b_\D(r)}(L_{P_r})=0$

$\quad (\D\in\DD_3)$;

$\L_0$ is functionally nonvanishing: $\exists r\in\N_3,\; \fa b\in\C_0, \; \exists L\in\L_0, \; F_b(L_{P_r})\ne0$;

$\L_0$ is not functionally $\D$-vanishing $\;\imp\; \L\in \R_3[\D]$;

$\L_0$ is functionally nonvanishing $\;\imp$

 $\quad\L_0$ is not functionally $\D$-vanishing $\fa\D\in\DD_3 \;\imp \; \L\in\R_3^+.$

\noindent
Let $\J_2$ stand for the set of increasing pairs of elements in $\N_3\times\N_2$.

\begin{lem}
\label{canonical-basis-2dim}
There exist unique $J=((i_1,j_1),(i_2,j_2))\in\J_2$ and matrices $L_k= [l_{i,j}^k]_3\in \L_0\; (k\in\N_2)$ such that
\[l_{i_r,j_r}^k=\d_{k,r}\quad (k,r\in\N_2)\]
and
\[l_{i,j}^k=0\quad\hbox{if}\; (i,j)\prec (i_k,j_k) \; (k\in\N_2).\]
\end{lem}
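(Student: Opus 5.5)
The plan is to prove this as the two-dimensional analogue of Lemma \ref{3dimensional-L0-basis}, using reduced row echelon form with respect to the lexicographic ordering $\prec$ on $\N_3\times\N_2$. First I record that $\dim\L_0=2$, exactly as in Proposition \ref{3dimensional-L0}. By transitivity the map $\Lambda\co\L\to\E_3$, $X\mapsto Xe_3$, is surjective, and its kernel is $\L_0$. So $\dim\L_0=5-3=2$. Every $L\in\L_0$ has vanishing third column, hence $L$ is determined by the six entries $l_{i,j}$ with $(i,j)\in\N_3\times\N_2$. This gives an injective linear map $\Phi\co\L_0\to\C^{6}$, with coordinates ordered by $\prec$.

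Existence comes from Gaussian elimination on the $2\times 6$ matrix whose rows are $\Phi$ of any basis of $\L_0$. Define $(i_1,j_1)$ as the $\prec$-smallest position at which some element of $\L_0$ has a nonzero entry. Pick $L_1'$ with $l'_{i_1,j_1}\ne0$ and normalize it so that this entry equals $1$. The elements of $\L_0$ vanishing at $(i_1,j_1)$ form a one-dimensional subspace. Let $(i_2,j_2)$ be the $\prec$-smallest position where its generator $L_2$ is nonzero, and normalize $L_2$ there. Then $(i_1,j_1)\prec(i_2,j_2)$: every element of $\L_0$ vanishes before $(i_1,j_1)$, and $L_2$ vanishes at $(i_1,j_1)$. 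Finally set $L_1=L_1'-l'^{1}_{i_2,j_2}L_2$. This kills the $(i_2,j_2)$ entry of $L_1$, and it does not change the entries of $L_1'$ at positions $\preceq(i_1,j_1)$, since $L_2$ vanishes there. The required conditions $l^k_{i_r,j_r}=\d_{k,r}$ and $l^k_{i,j}=0$ for $(i,j)\prec(i_k,j_k)$ now hold.

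Uniqueness uses the fact that the pivot positions are intrinsic. Suppose $(L_k)$ and $J$ satisfy the conditions. Then $L_1,L_2$ are linearly independent, because $\Phi(L_1)$ and $\Phi(L_2)$ have different leading positions, so they form a basis of $\L_0$. Take a general element $z_1L_1+z_2L_2\ne0$. Its $\prec$-leading nonzero position is $(i_1,j_1)$ when $z_1\ne0$, and $(i_2,j_2)$ otherwise. Hence $\{(i_1,j_1),(i_2,j_2)\}$ is exactly the set of leading positions of nonzero elements of $\L_0$, and $J$ is determined by $\L_0$ alone. For fixed $J$, any $L\in\L_0$ equals $l_{i_1,j_1}L_1+l_{i_2,j_2}L_2$; to see this, subtract and note that the difference vanishes at both pivots, hence is zero. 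Applying this to a second candidate pair $(L_1',L_2')$ gives $L_k'=L_k$.

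No step is a real obstacle. The only point needing care is that the final reduction $L_1=L_1'-l'^{1}_{i_2,j_2}L_2$ preserves the zeros of $L_1'$ before $(i_1,j_1)$. This is immediate because $(i_1,j_1)\prec(i_2,j_2)$ and $L_2$ vanishes at every position before $(i_2,j_2)$. Alternatively, I would simply cite Lemma 8 of \cite{ker25}, as was done for Lemma \ref{3dimensional-L0-basis}.
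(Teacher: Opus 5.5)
Your proposal is correct and takes essentially the paper's approach. The paper gives no proof here: it treats the lemma as the two-dimensional analogue of Lemma \ref{3dimensional-L0-basis}, which it refers to Lemma 8 of \cite{ker25}, and your reduced echelon form argument with respect to $\prec$ is that standard argument, with existence and uniqueness (via the intrinsic pivot positions) both handled properly. Since your argument only uses $\dim\L_0=2$, it also covers the later use of the lemma in Section \ref{exceptional-forms}, where transitivity is not assumed.
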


The pair $(L_1,L_2)$ and the subspace $\L_0$ itself are called  of \emph{type} $J$.
The general element of $\L_0$ is of the form
\[L=z_1L_1 + z_2L_2 \quad (z_1,z_2\in\C).\]
For any $r\in\N_3$ and $b\in\C_0$, we have
\[F_b(L_{P_r})= p_{r,b}(z_1,z_2)= \sum a_{k_1,k_2}(r,b) z_1^{k_1} z_2^{k_2}.\]
This polynomial is identically zero if and only if the coefficients $a_{k_1,k_2}(r,b)$ are zero for all $(k_1,k_2)\in \Z_+^2$.

In order to carry out a systematic study we classify the types in $\J_2$, each class containing 3 pairs:
\[\textup{(I)} \; j_1=j_2=2, \quad \textup{(II)} \; j_1=1, j_2=2, i_1\ne i_2, \quad \textup{(III)} \; j_1=2, j_2=1,\]
\[\textup{(IV)} \; i_1=i_2, \quad \textup{(V)} \; j_1=j_2=1.\]
We are going to examine validity of the previous functionally nonvanishing properties in the case of each type.

\begin{pro}
\label{classI-2basis}
If the type of $\L_0$ is of class \textup{(I)}, then $\L_0$ is functionally nonvanishing, and so $\L\in\R_3^+$.
\end{pro}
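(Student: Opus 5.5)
The plan is to verify the functional nonvanishing condition directly for each of the three types in class (I), namely $((1,2),(2,2))$, $((1,2),(3,2))$ and $((2,2),(3,2))$. Then $\L\in\R_3^+$ follows from Corollary \ref{strong-regularity-condition}, using the standing assumption $\L\in\T_3$. As in the proofs of Propositions \ref{classI-3basis} and \ref{classIV-3basis}, I will write out the canonical basis $(L_1,L_2)$ from Lemma \ref{canonical-basis-2dim}. For each type I choose $r$ so that the two rows in $P_r$ are exactly the rows $i_1,i_2$ carrying the pivots. I then exhibit a coefficient $a_{k_1,k_2}(r,b)$ that is nonzero for every $b\in\C_0$.

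Type $((1,2),(2,2))$. The ordering conditions force $l_{1,1}^1=0$, $l_{1,2}^1=1$, $l_{2,2}^1=0$. They also force the first row of $L_2$ to vanish, together with $l_{2,1}^2=0$ and $l_{2,2}^2=1$, since $(2,1)\prec(2,2)$. The entries $l_{2,1}^1$ and the third rows stay free. With $L=z_1L_1+z_2L_2$ and $P_3=(1,2)$ we get
\[L_{P_3}=\left[\begin{matrix} 0 & z_1\\ z_1 l_{2,1}^1 & z_2\end{matrix}\right],\]
and hence $F_b(L_{P_3})=-z_1^2 l_{2,1}^1 + b z_1 z_2$. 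So $a_{1,1}(3,b)=b\ne0$ for every $b\in\C_0$.

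Type $((1,2),(3,2))$. Now $L_1$ has first row $(0,1,0)$, free second row, third row $(l_{3,1}^1,0,0)$. $L_2$ vanishes except for $l_{3,2}^2=1$, because all positions before $(3,2)$ are zero. Taking $r=2$ and $P_2=(1,3)$ gives $F_b(L_{P_2})=-z_1^2 l_{3,1}^1+bz_1z_2$.

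Type $((2,2),(3,2))$. Here the first rows of both matrices vanish, $L_1$ has second row $(0,1,0)$ and third row $(l_{3,1}^1,0,0)$, and $L_2=E_{3,2}$. Taking $r=1$ and $P_1=(2,3)$ gives the same polynomial $F_b(L_{P_1})=-z_1^2 l_{3,1}^1+bz_1z_2$.

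In all three cases the coefficient of $z_1z_2$ equals $b\ne0$. By the two-variable analogue of Lemma \ref{coefficient-condition}, $\L_0$ is functionally nonvanishing. The only step requiring care is reading off correctly which entries of $L_1,L_2$ are forced to be $0$ or $1$ by the canonical form. In particular one must check that the $(i_1,2)$ entry of $L_2$ and the $(i_2,2)$ entry of $L_1$ vanish, since that is what makes the $b z_1z_2$ term survive uncancelled. Beyond that the argument is routine.
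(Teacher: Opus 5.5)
Your proposal is correct and is essentially the paper's own proof. For the three class (I) types you choose the same $r$ (namely $r=3,2,1$), obtain the same polynomial $-z_1^2\,l + b\,z_1z_2$ with $a_{1,1}=b\neq 0$, and conclude $\L\in\R_3^+$ via Corollary \ref{strong-regularity-condition} under the standing assumption $\L\in\T_3$.
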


\noindent
{\bf Proof.}
Suppose that $\L_0$ is of type $((1,2),(2,2))$.
Then
\[L=z_1L_1+z_2L_2 = \left[
\begin{matrix}
0& z_1&0\\
z_1 l_{2,1}^1& z_2&0\\
z_1 l_{3,1}^1 + z_2 l_{3,1}^2 & z_1 l_{3,2}^1 +z_2 l_{3,2}^2&0\end{matrix}\right],\]
and so
\[F_b(L_{P_3})= -l_{2,1}^1 z_1^2 + b z_1 z_2,  \quad \hbox{with}\; a_{1,1}=b\ne0.\]
If the type is $((1,2),(3,2))$, then $F_b(L_{P_2})= -z_1^2 l_{3,1}^1 + b z_1z_2$.
Finally, if the type is $((2,2),(3,2))$, then $F_b(L_{P_1})= -z_1^2 l_{3,1}^1 + b z_1z_2$.

\rightline{$\square$}

Before turning to the other classes, we note that for every $\D = \hbox{diag}(b_1,b_2,b_3)$ in $\DD_3$, we have 
\[b_\D(1) + b_\D(3)= b_\D(2).\]

\begin{pro}
\label{classII-2basis}
Let $\D\in\DD_3$ be arbitrary.
Suppose that the type $J$ of $\L_0$ is of class \textup{(II)}.
Then $\L_0$ is functionally $\D$-vanishing if and only if

\medskip
\textup{(a)} $J=((1,1),(2,2))$:

\medskip
\noindent
\textup{(E1)} $\quad L_1= \left[\begin{matrix}
1& -1/b_\D(3) & 0\\
0&0&0\\
b_\D(1)x & x&0\end{matrix}\right], \quad 
L_2= \left[\begin{matrix}
0&0&0\\
0&1&0\\
b_\D(1)y&y&0\end{matrix}\right] \quad (x,y\in\C);$

\medskip
\textup{(b)} $J=((1,1), (3,2))$:

\medskip
\noindent
\textup{(E2)} $\quad L_1=
\left[\begin{matrix}
1& -1/b_\D(2)& 0\\
-b_\D(1)x & x&0\\
0&0&0\end{matrix}\right], \quad
L_2= \left[\begin{matrix}
0&0&0\\
0&0&0\\
0&1&0\end{matrix}\right] \quad (x\in\C);$

\medskip
\textup{(c)} $J=((2,1),(3,2))$:

\medskip
\noindent
\textup{(E3)} $\quad L_1=
\left[\begin{matrix}
0&0&0\\
1& -1/b_\D(1)& 0\\
0&0&0\end{matrix}\right], \quad L_2=
\left[\begin{matrix}
0&0&0\\
0&0&0\\
0&1&0\end{matrix}\right].$
\end{pro}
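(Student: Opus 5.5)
The proof will be a direct computation, carried out separately for each of the three types of class (II), in the spirit of Propositions \ref{classI-2basis} and \ref{classII-3basis}. For a type $J$ I first write the canonical basis $(L_1,L_2)$ of Lemma \ref{canonical-basis-2dim} with its free entries. For example, for $J=((1,1),(2,2))$ we have $l^1_{1,1}=1$, $l^1_{2,2}=0$, $l^2_{1,1}=l^2_{1,2}=l^2_{2,1}=0$, $l^2_{2,2}=1$, and $l^1_{1,2}, l^1_{2,1}, l^1_{3,1}, l^1_{3,2}, l^2_{3,1}, l^2_{3,2}$ are free. Then I form $L=z_1L_1+z_2L_2$ and, for $r\in\N_3$, compute $F_{b_\D(r)}(L_{P_r})=p_{r}(z_1,z_2)$ as a quadratic form in $z_1,z_2$. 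The condition that $\L_0$ be functionally $\D$-vanishing is equivalent to the vanishing of all coefficients $a_{2,0}(r),a_{1,1}(r),a_{0,2}(r)$ for $r=1,2,3$, with $b=b_\D(r)$ fixed. This gives a system of at most nine polynomial equations in the free entries, and I solve it.

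In case (a), take $r=3$, so $P_3=(1,2)$ and $L_{P_3}=\left[\begin{smallmatrix} z_1 & z_1l^1_{1,2}\\ z_1l^1_{2,1} & z_2\end{smallmatrix}\right]$. The coefficients are then
\[a_{2,0}=-l^1_{1,2}l^1_{2,1},\qquad a_{1,1}=1+b_\D(3)l^1_{1,2},\qquad a_{0,2}=0.\]
These force $l^1_{1,2}=-1/b_\D(3)\neq 0$, and hence $l^1_{2,1}=0$.

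Next, $r=2$ uses rows $(1,3)$. Its coefficients are linear in the row-3 entries, and the $z_2^2$ coefficient combined with the others should force $l^2_{3,1}=b_\D(1)\,l^2_{3,2}$, and likewise $l^1_{3,1}=b_\D(1)\,l^1_{3,2}$. Here I expect to use the identity $b_\D(1)+b_\D(3)=b_\D(2)$ to reconcile the $r=2$ constraint, which involves $b_\D(2)$ and $-1/b_\D(3)$, with the $r=1$ constraint, which involves $b_\D(1)$. The $r=1$ case, with rows $(2,3)$, should then be automatically satisfied or give the same relation, so that $x=l^1_{3,2}$ and $y=l^2_{3,2}$ remain free. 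The converse direction amounts to substituting (E1) back and checking that all nine coefficients vanish; this is routine, again using $b_\D(1)+b_\D(3)=b_\D(2)$.

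Cases (b) and (c) run in the same way. In case (b), with $J=((1,1),(3,2))$, the pair $P_2=(1,3)$ gives $l^1_{1,2}=-1/b_\D(2)$. The pairs $P_3$ and $P_1$ then pin the second row to the form $(-b_\D(1)x,\,x)$ and kill the remaining row-3 and row-2 entries of $L_2$. In case (c), $L_1$ has zero first row, and the analogous computation with $P_1=(2,3)$ gives $l^1_{2,2}=-1/b_\D(1)$. The remaining pairs then force every other free entry to vanish; note that in the canonical basis $l^2_{2,1}=l^2_{2,2}=0$.

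The main obstacle is the bookkeeping in case (a) for $r=1$ and $r=2$. There the row-3 entries of both basis matrices interact bilinearly, for instance through terms like $l^1_{3,1}l^2_{3,2}-l^1_{3,2}l^2_{3,1}$, and I have to show that these bilinear conditions collapse to the proportionality relations $l^k_{3,1}=b_\D(1)l^k_{3,2}$. I would first extract the pure coefficients $a_{2,0}$ and $a_{0,2}$, which involve only one basis matrix at a time, to get the proportionality for each $L_k$ separately. Then I would verify that the mixed coefficient $a_{1,1}$ vanishes identically under these relations, via $b_\D(1)+b_\D(3)=b_\D(2)$.
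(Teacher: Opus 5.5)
Your plan is correct and is essentially the paper's proof. In both, the $P_3$ coefficients force $l^1_{1,2}=-1/b_\D(3)$ and $l^1_{2,1}=0$. The paper then reads off $l^k_{3,1}=b_\D(1)\,l^k_{3,2}$ directly from $F_{b_\D(1)}(L_{P_1})$ and uses $b_\D(1)+b_\D(3)=b_\D(2)$ only to check that $F_{b_\D(2)}(L_{P_2})\equiv 0$; your route, extracting the same relations from $P_2$ via that identity, is equivalent.

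Some of your predicted bookkeeping is off, though none of it affects the result:
\begin{itemize}
\item The bilinear obstacle you anticipate does not occur. Every monomial of $F_b(L_{P_r})$ takes one entry from each of the two selected rows, so the coefficients are linear in the row-3 entries.
\item In case (a) the $P_2$ polynomial is divisible by $z_1$, so it has no $z_2^2$ term. The $L_2$ relation comes from its $z_1z_2$ coefficient, or from the $z_2^2$ coefficient of $P_1$.
\item In cases (b) and (c) the canonical $L_2$ is already $E_{3,2}$, so there is nothing in $L_2$ to kill.
\item In those cases the entry $l^1_{3,1}$ is killed by the same pair that fixes $l^1_{1,2}$ (namely $P_2$ in (b)) or $l^1_{2,2}$ (namely $P_1$ in (c)).
\item In case (c) the pairs $P_2$ and $P_3$ give identically zero polynomials, because the first row vanishes. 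They force nothing.
\end{itemize}
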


\noindent
{\bf Proof.}
(a): By Lemma \ref{canonical-basis-2dim}, we know that
\[L_1=
\left[\begin{matrix}
1& l_{1,2}^1 &0\\
l_{2,1}^1 &0&0\\
l_{3,1}^1 & l_{3,2}^1&0\end{matrix}\right] \quad \hbox{and} \quad L_2=
\left[\begin{matrix}
0&0&0\\
0&1&0\\
l_{3,1}^2& l_{3,2}^2&0\end{matrix}\right],\]
hence
\[ L= z_1L_1+z_2L_2 = \left[\begin{matrix}
z_1& z_1 l_{1,2}^1&0\\
z_1 l_{2,1}^1& z_2&0\\
z_1 l_{3,1}^1 +z_2 l_{3,1}^2& z_1 l_{3,2}^1 + z_2 l_{3,2}^2&0\end{matrix}\right].\]
Suppose that $\L_0$ is functionally $\D$-vanishing.
The polynomial 
\[F_{b_\D(3)}(L_{P_3}) = -z_1^2 l_{1,2}^1 l_{2,1}^1 + z_1z_2(1+ b_\D(3) l_{1,2}^1)\]
is identically zero exactly when $l_{1,2}^1= -1/b_\D(3)$ and $l_{2,1}^1=0$.
Inview of $l_{2,1}^1=0$, we obtain
\[F_{b_\D(1)}(L_{P_1})= z_1z_2 (-l_{3,1}^1 + b_\D(1) l_{3,2}^1) + z_2^2 (-l_{3,1}^2 + b_\D(1) l_{3,2}^2),\]
which is identically zero if and only if $l_{3,1}^1= b_\D(1) l_{3,2}^1$ and $l_{3,1}^2 = b_\D(1) l_{3,2}^2$.
Taking $l_{3,2}^1=x$ and $l_{3,2}^2=y$, we arrive at the form (E1).

It remains to check that if $(L_1,L_2)$ is of the form (E1), then the third polynomial is automatically zero.
Indeed, then
\[L= \left[
\begin{matrix}
z_1& -z_1/b_\D(3) &0\\
0& z_2&0\\
z_1 b_\D(1)x +z_2b_\D(1)y & z_1x+z_2y & 0\end{matrix}\right],\]
and so
\begin{eqnarray*}
F_{b_\D(2)}(L_{P_2}) & = & z_1(z_1x+z_2y) + \frac{z_1}{b_\D(3)} \left(z_1 b_\D(1)x + z_2 b_\D(1)y\right)\\
 & & - \frac{b_\D(2)}{b_\D(3)} z_1(z_1x+z_2y)\\
 & = & z_1^2\frac{x}{b_\D(3)} \left( b_\D(3)+ b_\D(1) - b_\D(2)\right)\\
 & & + z_1z_2 \frac{y}{b_\D(3)} \left(b_\D(3)+ b_\D(1)-b_\D(2)\right) \equiv 0.
\end{eqnarray*}

The proofs of the other two cases (b) and (c) are similar and are left to the reader.

\rightline{$\square$}

\begin{pro}
\label{classIII-2basis}
Let $\D\in\DD_3$ be arbitrary.
Suppose that the type $J$ of $\L_0$ is of class \textup{(III)}.
Then $\L_0$ is functionally $\D$-vanishing if and only if

\medskip
\textup{(a)} $J=((1,2), (2,1))$:

\medskip
\noindent
\textup{(E4)} $\quad L_1=
\left[\begin{matrix}
0&1&0\\
0&0&0\\
b_\D(2)x& x&0\end{matrix}\right], \quad
L_2= \left[\begin{matrix}
0&0&0\\
1&1/b_\D(3)& 0\\
b_\D(2)y &y&0\end{matrix}\right] \quad (x,y\in\C)$;

\medskip
\textup{(b)} $J=((1,2),(3,1))$:

\medskip
\noindent
\textup{(E5)} $\quad L_1=
\left[\begin{matrix}
0&1&0\\
b_\D(3)x&x&0\\
0&0&0\end{matrix}\right], \quad L_2=
\left[\begin{matrix}
0&0&0\\
0&0&0\\
1& 1/b_\D(2)&0\end{matrix}\right] \quad (x\in\C)$;

\medskip
\textup{(c)} $J= ((2,2), (3,1))$:

\medskip
\noindent
\textup{(E6)} $\quad L_1=
\left[\begin{matrix}
0&0&0\\
0&1&0\\
0&0&0\end{matrix}\right], \quad
L_2= \left[\begin{matrix}
0&0&0\\
0&0&0\\
1& 1/b_\D(1)& 0\end{matrix}\right]$.
\end{pro}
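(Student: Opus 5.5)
We follow the pattern of the proof of Proposition \ref{classII-2basis}. In each case we first write down the general canonical basis given by Lemma \ref{canonical-basis-2dim}. We then form $L=z_1L_1+z_2L_2$ and compute the three quadratic polynomials $F_{b_\D(r)}(L_{P_r})$, $r\in\N_3$. Two of them are used to force the stated form, and the third is shown to vanish automatically via the identity $b_\D(1)+b_\D(3)=b_\D(2)$.

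\emph{Case (a), $J=((1,2),(2,1))$.} Here $L_1$ has first row $(0,1,0)$, second row $(0,l_{2,2}^1,0)$ and third row $(l_{3,1}^1,l_{3,2}^1,0)$. Also $L_2$ has first row zero, second row $(1,l_{2,2}^2,0)$ and third row $(l_{3,1}^2,l_{3,2}^2,0)$. Hence
\[L=\left[\begin{matrix}0&z_1&0\\ z_2& z_1l_{2,2}^1+z_2l_{2,2}^2&0\\ z_1l_{3,1}^1+z_2l_{3,1}^2& z_1l_{3,2}^1+z_2l_{3,2}^2&0\end{matrix}\right].\]
First we use $r=3$. We get
\[F_b(L_{P_3})=-z_1z_2+bz_1(z_1l_{2,2}^1+z_2l_{2,2}^2)=bl_{2,2}^1z_1^2+z_1z_2(bl_{2,2}^2-1).\]
With $b=b_\D(3)$ this vanishes identically iff $l_{2,2}^1=0$ and $l_{2,2}^2=1/b_\D(3)$.

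Next we use $r=2$, on rows $1$ and $3$. We get
\[F_b(L_{P_2})=-z_1(z_1l_{3,1}^1+z_2l_{3,1}^2)+bz_1(z_1l_{3,2}^1+z_2l_{3,2}^2).\]
With $b=b_\D(2)$ this vanishes iff $l_{3,1}^k=b_\D(2)l_{3,2}^k$ for $k=1,2$. Writing $x=l_{3,2}^1$ and $y=l_{3,2}^2$ gives exactly (E4).

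It remains to check $r=1$ for (E4), which uses rows $2$ and $3$. We get
\[F_{b_\D(1)}(L_{P_1})=z_2(z_1x+z_2y)-\frac{z_2}{b_\D(3)}\,b_\D(2)(z_1x+z_2y)+b_\D(1)\frac{z_2}{b_\D(3)}(z_1x+z_2y).\]
This equals $\frac{z_2(z_1x+z_2y)}{b_\D(3)}\bigl(b_\D(3)-b_\D(2)+b_\D(1)\bigr)\equiv0$. The converse direction (E4) $\Rightarrow$ functionally $\D$-vanishing follows from the same three computations read backwards.

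\emph{Case (b), $J=((1,2),(3,1))$.} Now $L_1$ has rows $(0,1,0)$, $(l_{2,1}^1,l_{2,2}^1,0)$, $(0,l_{3,2}^1,0)$, and $L_2$ has rows $0$, $0$, $(1,l_{3,2}^2,0)$. We use $r=2$, on rows $1$ and $3$:
\[F_b(L_{P_2})=-z_1z_2+bz_1(z_1l_{3,2}^1+z_2l_{3,2}^2).\]
This forces $l_{3,2}^1=0$ and $l_{3,2}^2=1/b_\D(2)$. Then we use $r=3$, on rows $1$ and $2$:
\[F_b(L_{P_3})=-z_1^2l_{2,1}^1+bz_1^2l_{2,2}^1.\]
This forces $l_{2,1}^1=b_\D(3)l_{2,2}^1$, and we put $x=l_{2,2}^1$. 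Finally, $r=1$ on rows $2$ and $3$ gives a multiple of $z_1z_2x\bigl(1-b_\D(3)/b_\D(2)-b_\D(1)/b_\D(2)\bigr)$, which is $0$ by the identity. This yields (E5).

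\emph{Case (c), $J=((2,2),(3,1))$.} Now $L_1$ has rows $0$, $(0,1,0)$, $(0,l_{3,2}^1,0)$, and $L_2$ has rows $0$, $0$, $(1,l_{3,2}^2,0)$. The case $r=3$ vanishes trivially, since row $1$ is zero. For $r=1$, on rows $2$ and $3$, we get
\[F_b(L_{P_1})=-z_1z_2+bz_1(z_1l_{3,2}^1+z_2l_{3,2}^2),\]
which forces $l_{3,2}^1=0$ and $l_{3,2}^2=1/b_\D(1)$. For $r=2$, rows $1$ and $3$, the first row is zero, so the polynomial vanishes. This yields (E6).

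The only delicate point is choosing, in each case, the two values of $r$ that pin down the free entries. The remaining one is then disposed of by the relation $b_\D(1)+b_\D(3)=b_\D(2)$, with the correct signs coming from the ordering $k_1<k_2$ in $b_\D(r)$. We expect this sign bookkeeping to be the main (though routine) obstacle.
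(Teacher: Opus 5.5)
Your proposal is correct and follows the paper's proof: in case (a) you derive the same conditions from $F_{b_\D(3)}(L_{P_3})$ and $F_{b_\D(2)}(L_{P_2})$, and you dispose of $F_{b_\D(1)}(L_{P_1})$ using $b_\D(1)+b_\D(3)=b_\D(2)$, exactly as the paper does. Your computations for cases (b) and (c), which the paper leaves to the reader, also check out; note that in (c) the polynomials for $r=2,3$ vanish trivially because the first row is zero, so the identity is not needed there, and you handle that correctly.
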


\noindent
{\bf Proof.}
(a): By Lemma \ref{canonical-basis-2dim} we know
\[ L_1= \left[\begin{matrix}
0&1&0\\
0&l_{2,2}^1&0\\
l_{3,1}^1& l_{3,2}^1&0\end{matrix}\right], \quad L_2 = \left[\begin{matrix}
0&0&0\\
1&l_{2,2}^2&0\\
l_{3,1}^2&l_{3,2}^2&0\end{matrix}\right],\]
hence
\[L=
\left[\begin{matrix}
0&z_1&0\\
z_2& z_1l_{2,2}^1 + z_2l_{2,2}^2&0\\
z_1l_{3,1}^1 + z_2 l_{3,1}^2 & z_1 l_{3,2}^1 + z_2l_{3,2}^2&0\end{matrix}\right].\]
Suppose that $\L_0$ is functionally $\D$-vanishing.
The polynomial 
\[F_{b_\D(3)}(L_{P_3})= z_1^2 b_\D(3) l_{2,2}^1 + z_1z_2 (-1 + b_\D(3) l_{2,2}^2)\]
is identically zero exactly when $l_{2,2}^1=0$ and $l_{2,2}^2 = 1/b_\D(3)$.
Furthermore, the polynomial 
\[F_{b_\D(2)}(L_{P_2}) = z_1^2(-l_{3,1}^1 + b_\D(2) l_{3,2}^1) + z_1z_2 (-l_{3,1}^2 + b_\D(2)l_{3,2}^2)\]
is identically zero if and only if $l_{3,1}^1 = b_\D(2) l_{3,2}^1$ and $l_{3,1}^2 = b_\D(2) l_{3,2}^2$.
Taking $l_{3,2}^1=x$ and $l_{3,2}^2=y$, we arrive at the form (E4).

Finally, if (E4) holds, then
\begin{eqnarray*}
F_{b_\D(1)}(L_{P_1})&=& z_1z_2 \frac{x}{b_\D(3)} (b_\D(3)-b_\D(2)
  + b_\D(1)) \\
  & &+ z_2^2 \frac{y}{b_\D(3)} (b_\D(3)- b_\D(2)+ b_\D(1)) \equiv 0.\end{eqnarray*}

The proof of the statements in (b) and (c) are similar and are left to the reader.

\rightline{$\square$}

\begin{pro}
\label{classIV-2basis}
Let $\D\in\DD_3$ be arbitrary.
Suppose that the type $J$ of $\L_0$ is of class \textup{(IV)}.
Then $\L_0$ is functionally $\D$-vanishing if and only if

\medskip
\textup{(a)} $J=((1,1),(1,2))$:

\medskip
\noindent
\textup{(E7)} $\quad L_1=
\left[\begin{matrix}
1&0&0\\
x&0&0\\
y&0&0\end{matrix}\right], \quad 
L_2= \left[\begin{matrix}
0&1&0\\
b_\D(3)x & x&0\\
b_\D(2)y & y&0\end{matrix}\right] \quad (x,y\in\C);$

\medskip
\textup{(b)} $J=((2,1),(2,2))$:

\medskip
\noindent
\textup{(E8)} $\quad L_1=
\left[\begin{matrix}
0&0&0\\
1&0&0\\
x&0&0\end{matrix}\right], \quad L_2=
\left[\begin{matrix}
0&0&0\\
0&1&0\\
b_\D(1)x&x&0\end{matrix}\right]\quad (x\in\C)$;

\medskip
\textup{(c)} $J=((3,1),(3,2))$:

\medskip
\noindent
\textup{(E9)} $\quad L_1=
\left[\begin{matrix}
0&0&0\\
0&0&0\\
1&0&0\end{matrix}\right], \quad L_2=
\left[\begin{matrix}
0&0&0\\
0&0&0\\
0&1&0\end{matrix}\right].$
\end{pro}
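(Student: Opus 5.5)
I will follow the pattern of the proofs of Propositions \ref{classII-2basis} and \ref{classIII-2basis}. First write down the canonical basis of $\L_0$ from Lemma \ref{canonical-basis-2dim}, then compute $L=z_1L_1+z_2L_2$. Next evaluate the three polynomials $F_{b_\D(r)}(L_{P_r})$, $r\in\N_3$. Then read off the vanishing of their coefficients one by one. Finally, check that the resulting form makes the remaining polynomial vanish automatically, using the identity $b_\D(1)+b_\D(3)=b_\D(2)$.

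For (a), $J=((1,1),(1,2))$, the first row of $L$ is $(z_1,z_2,0)$ and $l_{i,j}^k$ are otherwise free in rows 2 and 3. Set $u_i:=z_1l^1_{i,1}+z_2l^2_{i,1}$ and $v_i:=z_1l^1_{i,2}+z_2l^2_{i,2}$ for $i=2,3$. The canonical conditions force $l^1_{1,2}=0$.

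Then $F_{b_\D(3)}(L_{P_3})=z_1v_2-z_2u_2+b_\D(3)z_2v_2$. Its coefficients give:
\begin{itemize}
\item from $z_1^2$: $l^1_{2,2}=0$;
\item from $z_1z_2$: $l^2_{2,2}-l^1_{2,1}+b_\D(3)l^1_{2,2}=0$, hence $l^2_{2,2}=l^1_{2,1}=:x$;
\item from $z_2^2$: $-l^2_{2,1}+b_\D(3)l^2_{2,2}=0$, hence $l^2_{2,1}=b_\D(3)x$.
\end{itemize}
In the same way, $F_{b_\D(2)}(L_{P_2})$, built from rows $1$ and $3$, yields $l^1_{3,2}=0$, $l^2_{3,2}=l^1_{3,1}=:y$ and $l^2_{3,1}=b_\D(2)y$. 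This is exactly (E7).

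It remains to verify $F_{b_\D(1)}(L_{P_1})\equiv0$ for (E7). Here rows 2 and 3 are $(z_1x+z_2b_\D(3)x,\,z_2x)$ and $(z_1y+z_2b_\D(2)y,\,z_2y)$. The determinant is $xyz_2^2\,(b_\D(3)-b_\D(2))$, and the correction term is $b_\D(1)xyz_2^2$. Their sum vanishes by $b_\D(1)+b_\D(3)=b_\D(2)$. The converse direction (E7) $\Rightarrow$ functionally $\D$-vanishing is then immediate, since the first two polynomials vanish by construction.

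For (b), $J=((2,1),(2,2))$, the canonical form makes row 1 of $L$ zero and row 2 equal to $(z_1,z_2)$. Hence $F_b(L_{P_3})$ and $F_b(L_{P_2})$ involve row 1, and $F_{b_\D(3)}(L_{P_3})\equiv 0$ holds trivially. Likewise $F_{b_\D(2)}(L_{P_2})\equiv0$ holds trivially, because row 1 vanishes; I will double-check that the correction term $\z_{1,2}\z_{2,2}$ also contains a row-1 entry, which it does. So the only condition comes from $F_{b_\D(1)}(L_{P_1})$ on rows 2 and 3. Its coefficients give:
\begin{itemize}
\item $l^1_{3,2}=0$;
\item $l^2_{3,2}=l^1_{3,1}=:x$;
\item $l^2_{3,1}=b_\D(1)x$.
\end{itemize}
This is (E8).

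For (c), rows 1 and 2 both vanish, so every $F_b(L_{P_r})$ vanishes, and $L_1=E_{3,1}$, $L_2=E_{3,2}$ by the canonical conditions. This is (E9).

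The main obstacle is purely bookkeeping. One must get the signs and the assignment of $b_\D(r)$ to the correct row pair right, so that the automatic vanishing in (a) reduces precisely to $b_\D(1)+b_\D(3)-b_\D(2)=0$. I will therefore recompute that single coefficient carefully.
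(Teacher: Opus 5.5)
Your proposal is correct and follows the paper's proof: you read off (E7) from the vanishing coefficients of $F_{b_\D(3)}(L_{P_3})$ and $F_{b_\D(2)}(L_{P_2})$, and then check that $F_{b_\D(1)}(L_{P_1})=xyz_2^2\,(b_\D(3)-b_\D(2)+b_\D(1))\equiv 0$ via $b_\D(1)+b_\D(3)=b_\D(2)$. Your handling of (b) and (c), which the paper leaves to the reader, is also correct: the first row of $L_{P_r}$ (row $k_1$ of $L$) is zero for $r=2,3$ in case (b) and for all $r$ in case (c), so both the determinant and the correction term $b\,\z_{1,2}\z_{2,2}$ vanish there.
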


\noindent
{\bf Proof.}
(a): By Lemma \ref{canonical-basis-2dim} we know
\[L_1= \left[\begin{matrix}
1&0&0\\
l_{2,1}^1&l_{2,2}^1&0\\
l_{3,1}^1&l_{3,2}^1&0\end{matrix}\right], \quad
L_2= \left[\begin{matrix}
0&1&0\\
l_{2,1}^2& l_{2,2}^2&0\\
l_{3,1}^2&l_{3,2}^2&0\end{matrix}\right],\]
hence
\[L=\left[\begin{matrix}
z_1& z_2&0\\
z_1 l_{2,1}^1 + z_2 l_{2,1}^2 & z_1 l_{2,2}^1 + z_2 l_{2,2}^2&0\\
z_1 l_{3,1}^1 + z_2 l_{3,1}^2 & z_1 l_{3,2}^1+ z_2 l_{3,2}^2&0\end{matrix}\right].\]Suppose that $\L_0$ is functionally $\D$-vanishing.
The polynomial 
\begin{eqnarray*}
F_{b_\D(3)}(L_{P_3})&=& z_1^2 l_{2,2}^1 + z_1z_2 (l_{2,2}^2 - l_{2,1}^1 + b_\D(3) l_{2,2}^1)\\
 & & + z_2^2 (-l_{2,1}^2 + b_\D(3) l_{2,2}^2)
\end{eqnarray*}
is identically zero if and only if
\[l_{2,2}^1=0, \quad l_{2,1}^1= l_{2,2}^2, \quad l_{2,1}^2 = b_\D(3)l_{2,2}^2.\]
Furthermore,
\begin{eqnarray*}
F_{b_\D(2)}(L_{P_2}) & = & z_1^2 l_{3,2}^1 + z_1z_2 (l_{3,2}^2 - l_{3,1}^1 + b_\D(2) l_{3,2}^1) \\
 & & + z_2^2 (-l_{3,1}^2 + b_\D(2) l_{3,2}^2)\end{eqnarray*}
is identically zero exactly when
\[l_{3,2}^1 = 0, \quad l_{3,2}^2 = l_{3,1}^1, \quad l_{3,1}^2 = b_\D(2) l_{3,2}^2.\]
Taking $l_{2,2}^2 =x$ and $l_{3,2}^2= y$, we obtain the form (E7).

It is easy to check that if (E7) holds, then $F_{b_\D(1)}(L_{P_1})\equiv 0$ is also true.

The proof of (b) and (c) is left to the reader.

 \rightline{$\square$}

\begin{pro}
\label{classV-2basis}
Let $\D\in\DD_3$ be arbitrary.
Suppose that the type $J$ of $\L_0$ is of class \textup{(V)}.
Then $\L_0$ is functionally $\D$-vanishing if and only if

\medskip
\textup{(a)} $J=((1,1), (2,1))$:

\medskip
\noindent
\textup{(E10)} $\quad L_1=
\left[\begin{matrix}
1&0&0\\
0&0&0\\
x&0&0\end{matrix}\right], \quad
L_2= \left[\begin{matrix}
0&0&0\\
1&0&0\\
y&0&0\end{matrix}\right] \quad (x, y\in\C)$,

\medskip
or

\medskip
\noindent
\textup{(E11)} $\quad L_1=
\left[\begin{matrix}
1&u&0\\
0&0&0\\
(1+b_\D(2)u)x/u& x&0\end{matrix}\right], \quad (u\in\C\setminus\{0, -1/b_\D(3)\}, x,y\in\C)$

\medskip

\hskip2cm $L_2=
\left[\begin{matrix}
0&0&0\\
1& u/(1+b_\D(3)u)& 0\\
(1+b_\D(2)u)y/u& y &0\end{matrix}\right]$;

\medskip
\textup{(b)} $J=((1,1), (3,1)):$

\medskip
\noindent
\textup{(E12)} $\quad L_1=
\left[\begin{matrix}
1&0&0\\
x&0&0\\
0&0&0\end{matrix}\right], \quad L_2=
\left[\begin{matrix}
0&0&0\\
0&0&0\\
1&0&0\end{matrix}\right]\quad (x\in\C),$

\medskip

or 

\medskip  

\noindent
\textup{(E13)}
$\quad L_1=\left[\begin{matrix}
1&u&0\\
(1+b_\D(3)u)x/u & x&0\\
0&0&0\end{matrix}\right],$

\medskip

\hskip3cm $L_2=
\left[\begin{matrix}
0&0&0\\
0&0&0\\
1& u/(1+b_\D(2)u)&0\end{matrix}\right]$

\medskip
\hskip4cm $ (u\in \C_0\setminus\{ -1/b_\D(3), -1/b_\D(2)\},\; x\in\C); $

\medskip
\textup{(c)} $J=((2,1),(3,1))$:

\medskip
\noindent
\textup{(E14)} $\quad L_1= 
\left[\begin{matrix}
0&0&0\\
1&x&0\\
0&0&0\end{matrix}\right], \quad L_2=
\left[\begin{matrix}
0&0&0\\
0&0&0\\
1& x/(1+b_\D(1)x)& 0\end{matrix}\right]$

\medskip
\hskip6cm $(x\in\C\setminus\{-1/b_\D(1)\}).$

\end{pro}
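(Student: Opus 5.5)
The plan is to argue exactly as in the proofs of Propositions \ref{classII-2basis}--\ref{classIV-2basis}. We write the canonical basis from Lemma \ref{canonical-basis-2dim} with its free entries and form the general element $L=z_1L_1+z_2L_2$. Then we impose, one after another, the vanishing of the three quadratic polynomials $F_{b_\D(r)}(L_{P_r})$, $r\in\N_3$. Two of them give necessary conditions. The remaining one is then checked to vanish automatically, using the identity $b_\D(1)+b_\D(3)=b_\D(2)$.

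For (a), $J=((1,1),(2,1))$, write $u=l^1_{1,2}$, $a=l^1_{2,2}$, $c=l^2_{2,2}$, $(p,s)=(l^1_{3,1},l^1_{3,2})$ and $(q,t)=(l^2_{3,1},l^2_{3,2})$.

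First, $F_{b_\D(3)}(L_{P_3})=z_1^2a(1+b_\D(3)u)+z_1z_2\bigl(c(1+b_\D(3)u)-u\bigr)$. If $1+b_\D(3)u=0$, the second coefficient forces $u=0$, which is absurd. Hence $1+b_\D(3)u\ne0$, and the vanishing is equivalent to $a=0$ and $c=u/(1+b_\D(3)u)$.

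Next, $F_{b_\D(2)}(L_{P_2})=z_1^2\bigl(s(1+b_\D(2)u)-up\bigr)+z_1z_2\bigl(t(1+b_\D(2)u)-uq\bigr)$. We split into two cases.
\begin{itemize}
\item If $u=0$, then $c=0$ and $s=t=0$, while $p,q$ remain free. This is (E10) with $x=p$, $y=q$.
\item If $u\ne0$, then $p=(1+b_\D(2)u)s/u$ and $q=(1+b_\D(2)u)t/u$. This is (E11) with $x=s$, $y=t$.
\end{itemize}

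Finally, we verify that in either case the third polynomial vanishes identically. With $a=0$ we have $F_{b_\D(1)}(L_{P_1})=z_2(z_1s+z_2t)(1+b_\D(1)c)-z_2c(z_1p+z_2q)$. In case (E11), the $z_1z_2$ coefficient becomes
\[s\,\frac{(1+b_\D(3)u)+b_\D(1)u-(1+b_\D(2)u)}{1+b_\D(3)u}=0 .\]
The $z_2^2$ coefficient vanishes in the same way. In case (E10) the polynomial is trivially zero, since $c=s=t=0$. Conversely, the forms (E10) and (E11) make all three polynomials vanish, which gives the ``if'' direction.

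Cases (b) and (c) are handled in the same manner. In (b), $J=((1,1),(3,1))$, the first row together with the third row (via $r=2$) produces the parameter $u$ and its excluded values $-1/b_\D(2)$ and $-1/b_\D(3)$. Rows one and two (via $r=3$) then give the proportionality of the second row, split according to whether $u=0$ (giving (E12)) or $u\ne0$ (giving (E13)). In (c), $J=((2,1),(3,1))$, the first row of $L$ is zero. Hence $F_{b_\D(3)}$ and $F_{b_\D(2)}$ reduce to multiples of $z_1^2$ and $z_1z_2$ that are easily forced to vanish. The real condition comes from $r=1$, with rows two and three, and it gives $l^2_{3,2}=x/(1+b_\D(1)x)$, hence (E14).

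The main obstacle is bookkeeping rather than ideas. One must keep track of the degenerate values of the parameter, namely where $1+b_\D(k)u=0$ or $u=0$, so that the case splits (E10)/(E11) and (E12)/(E13) and the stated exclusions come out exactly. One must also confirm in each case that the third polynomial vanishes identically, via $b_\D(1)+b_\D(3)=b_\D(2)$.
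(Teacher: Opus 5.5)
Your part (a) is the paper's argument, and you even carry out the check $F_{b_\D(1)}(L_{P_1})\equiv0$ that the paper only calls a short computation. The gap is in (b), which you only sketch. There you claim that the $r=2$ polynomial ``produces the parameter $u$ and its excluded values $-1/b_\D(2)$ and $-1/b_\D(3)$''. This is false.

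Write the canonical basis of type $((1,1),(3,1))$ as
\[
L_1=\left[\begin{matrix}1&u&0\\ a&s&0\\ 0&v&0\end{matrix}\right],\qquad L_2=\left[\begin{matrix}0&0&0\\ 0&0&0\\ 1&w&0\end{matrix}\right].
\]
Then
\[
F_{b_\D(2)}(L_{P_2})=z_1^2\,v(1+b_\D(2)u)+z_1z_2\left(w(1+b_\D(2)u)-u\right).
\]
This only gives $1+b_\D(2)u\neq0$, $v=0$ and $w=u/(1+b_\D(2)u)$. The number $b_\D(3)$ enters only through $F_{b_\D(3)}(L_{P_3})=z_1^2\left(s(1+b_\D(3)u)-ua\right)$.

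At $u=-1/b_\D(3)$ this polynomial forces $a=0$ and leaves $s$ free. Moreover $w=u/(1+b_\D(2)u)=1/b_\D(1)$ by the identity $b_\D(1)+b_\D(3)=b_\D(2)$. Hence $F_{b_\D(1)}(L_{P_1})=z_1z_2\left(aw-s(1-b_\D(1)w)\right)$ vanishes identically. Consequently, for every $x\in\C$, the matrices
\[
L_1=E_{1,1}-\frac{1}{b_\D(3)}E_{1,2}+xE_{2,2},\qquad L_2=E_{3,1}+\frac{1}{b_\D(1)}E_{3,2}
\]
span a functionally $\D$-vanishing $\L_0$ of type $((1,1),(3,1))$. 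This $\L_0$ is neither (E12), since $u\neq0$, nor (E13) as stated, since that form excludes $u=-1/b_\D(3)$.

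So the ``only if'' half of (b), as written, cannot be proved. The exclusion $u\neq-1/b_\D(3)$ in (E13) is spurious. In (E11) that exclusion is genuine, because at $u=-1/b_\D(3)$ the $z_1z_2$-coefficient of $F_{b_\D(3)}(L_{P_3})$ reduces to $-u\neq0$. In (b) there is no analogous obstruction. Your own sketch for (b) reveals this: for $u\neq0$, the $r=3$ condition gives $a=(1+b_\D(3)u)s/u$ with no restriction at $u=-1/b_\D(3)$. Actually doing the computation yields (E12) for $u=0$, and the (E13) formulas for all $u\in\C_0\setminus\{-1/b_\D(2)\}$. You needed to compute (b), not assert that the stated exclusions come out.

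A minor point on (c): the first row of $L$ is zero, so $F_{b_\D(3)}(L_{P_3})$ and $F_{b_\D(2)}(L_{P_2})$ vanish identically and nothing is ``forced'' there. All the conditions come from $r=1$: $l^1_{3,2}=0$, $1+b_\D(1)x\neq0$ and $l^2_{3,2}=x/(1+b_\D(1)x)$. Your conclusion (E14) is correct.
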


\noindent
{\bf Proof.}
(a): By Lemma \ref{canonical-basis-2dim} we know
\[L_1= \left[\begin{matrix}
1& l_{1,2}^1&0\\
0& l_{2,2}^1&0\\
l_{3,1}^1&l_{3,2}^1&0\end{matrix}\right], \quad L_2=
\left[\begin{matrix}
0&0&0\\
1&l_{2,2}^2&0\\
l_{3,1}^2&l_{3,2}^2&0\end{matrix}\right],\]
hence
\[L=\left[\begin{matrix}
z_1& z_1l_{1,2}^1&0\\
z_2& z_1l_{2,2}^1 + z_2 l_{2,2}^2&0\\
z_1 l_{3,1}^1 + z_2 l_{3,1}^2 & z_1 l_{3,2}^1 + z_2l_{3,2}^2&0\end{matrix}\right].\]
Suppose that $\L_0$ is $\D$-vanishing.
The polynomial 
\[F_{b_\D(3)}(L_{P_3})= z_1^2 l_{2,2}^1 (1+ b_\D(3) l_{1,2}^1) + z_1z_2 (l_{2,2}^2 - l_{1,2}^1 +b_\D(3) l_{1,2}^1 l_{2,2}^2)\]
is identically zero if and only if
\[l_{2,2}^1 (1+ b_\D(3) l_{1,2}^1)=0 \quad \hbox{and} \quad l_{1,2}^1 = l_{2,2}^2 (1+ b_\D(3) l_{1,2}^1).\]
The assumption $1+b_\D(3)l_{1,2}^1=0$ leads to the contradiction $l_{1,2}^1=0$.
Hence $1+ b_\D(3) l_{1,2}^1\ne 0$, and so
\[l_{2,2}^1=0 \quad \hbox{and} \quad l_{2,2}^2 = \frac{l_{1,2}^1}{1+ b_\D(3)l_{1,2}^1}.\]
The second polynomial 
\begin{eqnarray*}
F_{b_\D(2)}(L_{P_2}) & = & z_1^2 (l_{3,2}^1 - l_{1,2}^1 l_{3,1}^1 + b_\D(2) l_{1,2}^1 l_{3,2}^1)\\
 & & +z_1z_2 (l_{3,2}^2 - l_{1,2}^1 l_{3,1}^2 + b_\D(2) l_{1,2}^1 l_{3,2}^2)\end{eqnarray*}
is identically zero exactly when
\[l_{1,2}^1 l_{3,1}^1= l_{3,2}^1(1+ b_\D(2) l_{1,2}^1), \quad l_{1,2}^1 l_{3,1}^2 = l_{3,2}^2 (1+ b_\D(2) l_{1,2}^1).\]
Assuming $l_{1,2}^1=0$, we infer $l_{2,2}^2 = l_{3,2}^1 = l_{3,2}^2 =0$.
Taking $l_{3,1}^1=x$ and $l_{3,1}^2 =y$, we obtain (E10).
Assuming $l_{1,2}^1\ne 0$, and taking $l_{1,2}^1 =u, l_{3,2}^1=x, l_{3,2}^2 =y$ we obtain (E11).

If (E10) holds, then $F_{b_\D(1)}(L_{P_1})\equiv 0$ because of a zero column.
If (E11) holds, then a short computation yields that $F_{b_\D(1)}(L_{P_1}) \equiv 0$.

The proof of (b) and (c) is left to the reader.

\rightline{$\square$}

\section{The exceptional forms in 5-dimension}
\label{exceptional-forms}

Now we turn to the study of the exceptional forms (E1)--(E14).
We are going to show that $\D$-singularity and transitivity are inconsistent conditions in these cases, with the only exception (E7).

\emph{Let us be given a subspace $\L\in \hbox{{\rm Lat}}M_3[\C]$ with $\dim\L=5$, and let us assume that $\dim\L_0=2$ is true for the subspace $\L_0= \L\cap\M_3$.}
We complete the canonical basis of $\L_0$ to a canonical basis of $\L$.

\begin{lem}
\label{basis-completion-5dim}
Let $(L_1,L_2)$ be a canonical basis in $\L_0$ of type 
\[J=((i_1,j_1), (i_2,j_2))\in\J_2.\]
Then there exist unique matrices $Q_k= [q_{i,j}^k]_3\in \L\;\; (k\in\N_3)$ such that
\[q_{i,3}^k = \d_{i,k}\quad (i,k\in\N_3)\]
and
\[q_{i_r,j_r}^k = 0 \quad (r\in\N_2,\; k\in\N_3).\]
\end{lem}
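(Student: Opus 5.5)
The plan is to argue exactly as in Proposition \ref{3dimensional-L0}, using the linear map
\[\Lambda\co \L\to\E_3,\quad X\mapsto Xe_3,\]
which assigns to a matrix its third column. Its kernel is $\L\cap\M_3=\L_0$. Since we assume $\dim\L=5$ and $\dim\L_0=2$, the rank--nullity formula gives $\dim\Lambda(\L)=5-2=3$, so $\Lambda$ is surjective. This step does not even need transitivity of $\L$; the dimension hypotheses alone suffice. Hence for each $k\in\N_3$ there is $Y_k=[\et^k_{i,j}]_3\in\L$ with $Y_ke_3=e_k$, that is $\et^k_{i,3}=\d_{i,k}$.

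Next I correct $Y_k$ at the two pivot positions using the canonical basis $(L_1,L_2)$ of Lemma \ref{canonical-basis-2dim}. By that lemma $l^k_{i_r,j_r}=\d_{k,r}$ for $k,r\in\N_2$. So I set
\[Q_k:=Y_k-\et^k_{i_1,j_1}L_1-\et^k_{i_2,j_2}L_2\in\L .\]
Since $L_1,L_2\in\M_3$, the third column of $Q_k$ equals that of $Y_k$, so $q^k_{i,3}=\d_{i,k}$. At the position $(i_r,j_r)$ the entry of $Q_k$ is $\et^k_{i_r,j_r}-\et^k_{i_r,j_r}\cdot 1-0=0$. This uses $j_r\in\N_2$, so the pivots lie outside the third column and the two kinds of conditions do not interfere.

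For uniqueness, suppose $Q_k$ and $Q_k'$ both satisfy the requirements. Then $Q_k-Q_k'\in\L$ has zero third column, so it lies in $\L_0$ and can be written as $aL_1+bL_2$. Evaluating at $(i_1,j_1)$ and $(i_2,j_2)$ and using $l^k_{i_r,j_r}=\d_{k,r}$ gives $a=0$ and $b=0$. Hence $Q_k=Q_k'$.

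I do not expect a genuine obstacle. The only point needing care is the surjectivity of $\Lambda$, which follows from the assumed dimensions $\dim\L=5$ and $\dim\L_0=2$ rather than from transitivity. The rest is the same bookkeeping as in Lemma 10 of \cite{ker25}, cited for Lemma \ref{6dimension-extended-basis}.
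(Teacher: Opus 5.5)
Your argument is correct and is essentially the routine completion the paper has in mind: the paper gives no separate proof here, and for the analogous Lemma \ref{6dimension-extended-basis} it defers to Lemma 10 of its reference. Your two steps match that completion, namely the rank--nullity count for $X\mapsto Xe_3$ as in Proposition \ref{3dimensional-L0}, followed by elimination at the two pivot positions using $l^k_{i_r,j_r}=\d_{k,r}$. You are also right that only the standing hypotheses $\dim\L=5$ and $\dim\L_0=2$ of Section \ref{exceptional-forms} are needed, not transitivity, and this matters because the lemma is applied there to subspaces $\L$ that may fail to be transitive.
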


The \emph{canonical basis} $(L_1,L_2,Q_1,Q_2,Q_3)$ of $\L$, and $\L$ itself, are also called \emph{of type} $J$.

We enlarge the subspace $\L_0$ by 1-dimension.
Namely, for every $r\in\N_3$, let $\L_r$ denote the subspace spanned by $\L_0$ and $Q_r$.
Given any $\D= \hbox{diag}(b_1,b_2,b_3)\in\DD_3$, it is clear that $\D$-singularity of $\L$ implies $\D$-singularity of $\L_r$, for every $r\in\N_3$.
Furthermore, $\L_r\in\S_3[\D]$ holds if and only if $\L_r$ is \emph{functionally $\D$-vanishing}, that is $F_{b_\D(r)}(X_{P_r})=0$ is true, for every $X\in\L_r$.

\begin{thm}
\label{En-inconsistancy}
Let $\D\in\DD_3$ be given.
Let us assume that the canonical basis $(L_1,L_2)$ of $\L_0$ is of the form \textup{(En)}, where $n\in\N_{14}$ but $n\ne 7$.
If $\L$ is $\D$-singular, then $\L$ cannot be transitive: $\L\in\S_3[\D] \imp \L\not\in\T_3$.
\end{thm}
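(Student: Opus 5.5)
Assume $\L\in\S_3[\D]$ with $\L_0$ of form (En), $n\ne7$. We follow the pattern of Propositions \ref{regularity-cases123} and \ref{regularity-cases45}. The argument has two parts. First, $\D$-singularity of the $4$-dimensional subspaces $\L_r$ forces special shapes on $Q_1,Q_2,Q_3$. Second, these shapes, together with the shape of $(L_1,L_2)$, produce a nonzero vector $y$ and a nonzero vector $x$ with $\la Xx,y\ra=0$ for all $X\in\L$. That contradicts transitivity.

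\emph{Step 1: the shape of the $Q_r$.} Since $\L\in\S_3[\D]$, each $\L_r=\vee\{L_1,L_2,Q_r\}$ is functionally $\D$-vanishing, i.e.
\[F_{b_\D(r)}(X_{P_r})\equiv0 \quad\hbox{for } X=z_1L_1+z_2L_2+w_rQ_r.\]
By Lemma \ref{basis-completion-5dim}, $Q_r$ has $1$ at $(r,3)$, zeros elsewhere in the third column, zeros at the two pivot positions $(i_1,j_1),(i_2,j_2)$, and four free entries in the first two columns. Expanding $F_{b_\D(r)}(X_{P_r})$ as a polynomial in $z_1,z_2,w_r$ gives three kinds of conditions:
\begin{itemize}
\item the $z$-quadratic part vanishes automatically, because $\L_0$ is functionally $\D$-vanishing;
\item the coefficients of $z_1w_r$ and $z_2w_r$ are linear in the entries of the $2\times2$ block $(Q_r)_{P_r}$;
\item the coefficient of $w_r^2$ equals $F_{b_\D(r)}((Q_r)_{P_r})$.
\end{itemize}
Solving these, typically with the help of the identity $b_\D(1)+b_\D(3)=b_\D(2)$ and of Lemma \ref{simultaneous-2equations}-type reasoning, should pin down $(Q_r)_{P_r}$ to a one- or two-parameter family. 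The entries of $Q_r$ in row $r$ stay free.

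\emph{Step 2: a non-transitivity witness.} With $x=\sum\x_ie_i$ and $y=\sum\ol\et_ie_i$, we write down the six bilinear equations
\[\la L_kx,y\ra=0,\qquad \la Q_rx,y\ra=0,\]
exactly as in the proof of Proposition \ref{regularity-cases123}. Each family (En) has a zero row, or rows that are proportional after the constraints of Step 1. Examples:
\begin{itemize}
\item in (E3), (E6), (E9) two rows of $\L_0$ vanish;
\item in (E1), rows 1 and 3 of $L_1,L_2$ are related through $b_\D(1)$;
\item in (E10), (E12), the second column of $\L_0$ is zero.
\end{itemize}
So the natural choice is to take $y$ supported on one or two coordinates where $\L_0$ contributes only one linear condition on $x$. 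The $Q$'s then contribute at most one further linear condition, leaving a nonzero solution $x$ in $\C^3$. Concretely, in each case we seek $y\ne0$ for which the span of the six functionals $x\mapsto\la Xx,y\ra$ has rank at most $2$. Equivalently, we seek $y$ with $\dim(\L^*y)\le2$, where $\L^*=\{X^*:X\in\L\}$.

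\emph{Cases and main obstacle.} We would carry out Step 1 and Step 2 first for (E1)–(E3) and (E4)–(E6), and then for (E8)–(E14). Within each class, the three subcases differ by a permutation of rows together with the corresponding substitution $b_\D(1),b_\D(2),b_\D(3)$, so they can be handled "similarly", as the paper does elsewhere. The main obstacle is Step 1 in the two-parameter families (E11) and (E13), where $u\ne0$. There $F_{b_\D(r)}(X_{P_r})$ has many mixed terms and the resulting constraints on $Q_r$ are nonlinear. I would first use the $z_kw_r$ coefficients to express $(Q_r)_{P_r}$ linearly in terms of one free entry, then substitute into the $w_r^2$ coefficient. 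I expect this to force the $P_r$-block of $Q_r$ to share a common kernel direction with $L_1,L_2$ on a suitable row.

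Some care is needed in Step 2 to find a $y$ that works uniformly in the parameters $u,x,y$ and in $b_\D$. If no such $y$ exists for some parameter values, that would indicate the (E7)-like genuine exception. We would then re-examine whether the identity $b_\D(1)+b_\D(3)=b_\D(2)$ rules it out.
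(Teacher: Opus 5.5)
Your route is the paper's route. The paper also uses the functional $\D$-vanishing of $\L_1,\L_2,\L_3$ to restrict $Q_1,Q_2,Q_3$. Then, for each resulting configuration, it exhibits a $\wt y$ supported on a single coordinate with $\dim\L^*\wt y\le 2$. It carries this out only for (E1) and leaves the other forms to the reader.

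\textbf{The gap: how Step 1 feeds Step 2.} The $\D$-vanishing of $\L_r$ does not confine $(Q_r)_{P_r}$ to a single family. Typically it yields an alternative, and one branch kills a parameter of $\L_0$. In (E1):
\begin{itemize}
\item $\L_1$ gives either $q^1_{2,1}=0$, in which case the third row of $Q_1$ is a multiple of $(b_\D(1),1,0)$ like the third rows in $\L_0$, or $q^1_{2,1}\ne0$, which forces $x=0$ and $q^1_{3,2}=0$;
\item $\L_2$ likewise gives either $q^2_{1,2}=0$ or $y=0$.
\end{itemize}
The witness must therefore be chosen branch by branch, and no single coordinate works throughout. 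Suppose $q^1_{2,1}\ne0$, $q^2_{1,2}=0$ and $y\ne0$. On $e_3$ the space $\L_0$ contributes only $b_\D(1)\x_1+\x_2$. But $Q_1$ adds $q^1_{2,1}y\,\x_1$ and $Q_3$ adds a functional containing $\x_3$, so the rank is $3$. The paper uses $\et_1\ne0=\et_2=\et_3$ in this branch, which works because the first row of $Q_2$ vanishes there. So your assertion that the $Q$'s ``then contribute at most one further linear condition'' fails for a fixed coordinate. Your fallback is also mistaken: the absence of a uniform witness is not an ``(E7)-like genuine exception.'' It only means the argument must split along the branches of Step 1, as the paper's four subcases for (E1) do.

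\textbf{Two smaller points.} First, the obstacle you expect in (E11) and (E13) does not arise.
\begin{itemize}
\item In (E11), put $v=u/(1+b_\D(3)u)$ and $c=(1+b_\D(2)u)/u$. The rows of elements of $\L_0$ are then multiples of $(1,u)$, $(1,v)$ and $(c,1)$. The identity $1+b_\D(1)v=vc$, together with $c\ne b_\D(1)$ (since $u\ne-1/b_\D(3)$), makes the constraints exactly of the (E1) shape. $\L_3$ forces $q^3_{1,2}=q^3_{2,2}=0$. $\L_1$ gives either $q^1_{2,2}=0$ with the third row of $Q_1$ proportional to $(c,1,0)$, or else $x=0$. $\L_2$ gives the analogous alternative with $y=0$. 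After that the same four witnesses work.
\item In (E13), the branch $q^3_{1,2}\ne0$ kills no parameter. There you use $e_3$, since the branch $q^1_{3,2}\ne0$ of $\L_1$ forces $x=0$; or you use $e_2$ if $x=0$.
\end{itemize}
Second, some of your illustrative claims are off.
\begin{itemize}
\item In (E3) and (E6) only the first row vanishes on $\L_0$.
\item In (E1) the relevant feature concerns the third rows only, not a relation between rows $1$ and $3$.
\item The forms within one class are not row permutations of each other; in class (II) they carry two, one and zero parameters. So each form needs its own computation.
\end{itemize}
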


\noindent
{\bf Proof.}
We provide the proof for the form (E1).
The other cases can be treated similarly.
The details are left to the interested reader.

Let us assume that (E1) holds:
\[L_1=\left[\begin{matrix}
1& -1/b_\D(3)&0\\
0&0&0\\
b_\D(1)x& x&0\end{matrix}\right], \quad L_2=
\left[\begin{matrix}
0&0&0\\
0&1&0\\
b_\D(1)y&y&0\end{matrix}\right] \quad (x,y\in\C).\]
It follows by Lemma \ref{basis-completion-5dim} that
\[ Q_r=
\left[\begin{matrix}
0& q_{1,2}^r& \d_{1,r}\\
q_{2,1}^r& 0& \d_{2,r}\\
q_{3,1}^r& q_{3,2}^r& \d_{3,r}\end{matrix}\right] \quad (r\in\N_3).\]
The general element of $\L_r$ is of the form
\[X=\left[\begin{matrix}
z_1&  -z_1/b_\D(3) + w_r q_{1,2}^r& w_r\d_{1,r}\\
w_r q_{2,1}^r & z_2& w_r\d_{2,r}\\
z_1 b_\D(1)x + z_2 b_\D(1)y + w_r q_{3,1}^r & z_1x +z_2y + w_r q_{3,2}^r&w_r \d_{3,r}\end{matrix}\right].\]
The polynomial
\[F_{b_\D(r)}(X_{P_r})= p_r(z_1,z_2,w_r)\]
is identically zero exactly when all coefficients are zero in its canonical form.

For $r=1$, we have
\[F_{b_\D(1)}(X_{P_1})= z_1w_1 q_{2,1}^1 x + z_2w_1(q_{2,1}^1y - q_{3,1}^1 + b_\D(1) q_{3,2}^1) + w_1^2 q_{2,1}^1 q_{3,2}^1,\]
which is identically zero if and only if
\[q_{2,1}^1x=0, \quad q_{3,1}^1 = q_{2,1}^1 y + b_\D(1) q_{3,2}^1, \quad q_{2,1}^1 q_{3,2}^1 = 0.\]
Assuming $q_{2,1}^1=0$, we get $q_{3,1}^1= b_\D(1) q_{3,2}^1$, and so with $q_{3,2}^1=q_1\in\C$ we obtain
\[Q_1 = \left[\begin{matrix}
0& q_{1,2}^1& 1\\
0&0&0\\
b_\D(1)q_1& q_1&0\end{matrix}\right].\]
Assuming $q_{2,1}^1\ne 0$, we get $x=q_{3,2}^1=0,\; q_{3,1}^1 = q_{2,1}^1 y$, and so with $q_{2,1}^1=q_1$ we obtain
\[Q_1'=\left[\begin{matrix}
0& q_{1,2}^1 & 1\\
q_1&0&0\\
q_1y& 0&0\end{matrix}\right] \quad (q_1\in\C_0,\; x=0).\]
For $r=2$, we have
\begin{eqnarray*}
F_{b_\D(2)}(X_{P_2}) & = & z_1w_2 \left(q_{3,2}^2 + \frac{1}{b_\D(3)} q_{3,1}^2 - b_\D(1)x q_{1,2}^2 - \frac{b_\D(2)}{b_\D(3)} q_{3,2}^2 + b_\D(2) q_{1,2}^2 x\right) \\
 & & + z_2 w_2 \left( -q_{1,2}^2 b_\D(1)y + b_\D(2) q_{1,2}^2 y\right)\\
 & & + w_2^2 \left(-q_{1,2}^2 q_{3,1}^2 + b_\D(2) q_{1,2}^2 q_{3,2}^2\right),
\end{eqnarray*}
which is identically zero exactly when
\[b_\D(3) q_{1,2}^2x -\frac{b_\D(1)}{b_\D(3)} q_{3,2}^2 + \frac{1}{b_\D(3)} q_{3,1}^2=0,\]
\[q_{1,2}^2y=0, \quad q_{1,2}^2(-q_{3,1}^2 + b_\D(2) q_{3,2}^2)=0.\]
We have applied the identity $b_\D(3)+ b_\D(1)= b_\D(2)$.
Assuming $q_{1,2}^2=0$, we get $b_\D(1) q_{3,2}^2 = q_{3,1}^2$, and so with $q_{3,2}^2 =q_2$ we obtain
\[Q_2=\left[\begin{matrix}
0&0&0\\
q_{2,1}^2&0&1\\
b_\D(1) q_2 & q_2&0\end{matrix}\right].\]
Assuming $q_{1,2}^2\ne0$, we get $y=0, \; q_{3,1}^2= b_\D(2) q_{3,2}^2, \; b_\D(3) q_{1,2}^2x + q_{3,2}^2=0$, and so with $q_{1,2}^2=q_2$ we obtain
\[Q_2'= 
\left[\begin{matrix}
0& q_2&0\\
q_{2,1}^2 &0&1\\
-b_\D(2)b_\D(3) q_2x & - b_\D(3) q_2x& 0 \end{matrix}\right] \quad (q_2\in\C_0, \; y=0).\]
For $r=3$, we have
\[F_{b_\D(3)}(X_{P_3}) = z_1 w_3 \frac{1}{b_\D(3)} q_{2,1}^3 + z_2 w_3 b_\D(3) q_{1,2}^3 - w_3^2 q_{1,2}^3 q_{2,1}^3,\]
which is identically zero if and only if $q_{2,1}^3= q_{1,2}^3=0$, hence 
\[Q_3= \left[\begin{matrix}
0&0&0\\
0&0&0\\
q_{3,1}^3&q_{3,2}^3&1\end{matrix}\right].\]

Considering $\wt x= \x_1e_1 + \x_2 e_2 + \x_3e_3$ and $\wt y= \wt\et_1 e_1 + \wt\et_2e_2 + \wt\et_3 e_3\ne0$, the system of equations checking transitivity takes the form
\begin{itemize}
\item[\textup{(1)}] $\quad 0= \la L_1\wt x,\wt y\ra = \left(\x_1-\frac{1}{b_\D(3)}\x_2\right) \et_1 + x (b_\D(1)\x_1+\x_2)\et_3,$
\item[\textup{(2)}] $\quad 0= \la L_2 \wt x, \wt y\ra =\x_2 \et_2 + y(b_\D(1)\x_1 + \x_2)\et_3,$
\item[{(3)}] $\quad 0= \la Q_1 \wt x, \wt y \ra = (q_{1,2}^1 \x_2+\x_3)\et_1 + q_1 (b_\D(1)\x_1 + \x_2)\et_3,$
\item[\textup{(3')}] $\quad 0= \la Q_1' \wt x, \wt y\ra = (q_{1,2}^1\x_2+\x_3)\et_1 + q_1\x_1\et_2+ q_1y\x_1\et_3; \; x=0,$
\item[\textup{(4)}] $\quad 0= \la Q_2 \wt x, \wt y\ra= (q_{2,1}^2 \x_1 +\x_3)\et_2 + q_2 (b_\D(1)\x_1+\x_2)\et_3,$
\item[\textup{(4')}] $\quad 0= \la Q_2' \wt x,\wt y\ra = q_2 \x_2\et_1 + (q_{2,1}^2\x_1+\x_3)\et_2 - b_\D(3) q_2 x (b_\D(2)\x_1+\x_2)\et_3 ; \; y=0,$
\item[\textup{(5)}] $\quad 0 = \la Q_3 \wt x, \wt y\ra = (q_{3,1}^3 \x_1 + q_{3,2}^3 \x_2 + \x_3) \et_3.$
\end{itemize}

If (3) and (4) hold, then choosing $\et_1=\et_2=0$ and $\et_3\ne 0$, the system reduces to
\[b_\D(1)\x_1+\x_2=0,\quad q_{3,1}^3 \x_1 + q_{3,2}^3 \x_2 +\x_3 =0.\]
If (3) and (4') hold with $y=0$, then  choosing $\et_1=\et_3=0$ and $\et_2\ne0$, the system reduces to
\[\x_2=0, \quad q_{2,1}^2 \x_1 +\x_3=0.\]
If (3') with $x=0$ and (4) hold, then choosing $\et_2=\et_3=0$ and $\et_1\ne0$, the system reduces 
to
\[b_\D(3)\x_1 - \x_2=0, \quad q_{1,2}^1 \x_2 +\x_3=0.\]
If (3') and (4') hold with $x=y=0$, then choosing $\et_1=\et_2=0$ and $\et_3\ne0$, the system reduces to
\[q_{3,1}^3\x_1 + q_{3,2}^3 \x_2 + \x_3 =0.\]
All the reduced systems have nonzero solutions for $x$, and so in each case the subspace $\L$ is not transitive.

\rightline{$\square$}

\section{$\D$-singularity in the particular case (E7)}
\label{E7-singularity}

Let us be given a subspace $\L\in\hbox{Lat}M_3[\C]$ with $\dim\L=5$, assuming that the subspace $\L_0=\L\cap\M_3$ is 2-dimensional.
Let $\D= \hbox{diag}(b_1,b_2,b_3)\in\DD_3$ be arbitrary.
\emph{Let us suppose also that the canonical basis $(L_1,L_2)$ in $\L_0$ is of the form \textup{(E7)}}:
\[L_1=\left[\begin{matrix}
1&0&0\\
x&0&0\\
y&0&0\end{matrix}\right] \quad \hbox{and}\quad L_2= \left[\begin{matrix}
0&1&0\\
b_\D(3)x& x&0\\
b_\D(2)y&y&0\end{matrix}\right] \quad (x, y\in\C).\]
Let $(L_1,L_2,Q_1,Q_2,Q_3)$ be the extended canonical basis in $\L$.
We are going to characterize $\D$-singularity of $\L$ in terms of the entries of the matrices in the canonical basis.

Clearly, if $\L$ is $\D$-singular, then so is every subspace $\L'$ of $\L$.
We shall study subspaces spanned by 3 basis vectors; altogether that means ten 3-dimensional possible subspaces.
Recall that, for every $r\in\N_3,\; \L_r$ denotes the subspace spanned by $\{L_1,L_2,Q_r\}$.
First we characterize $\D$-singularity of these subspaces.

\begin{pro}
\label{Q2Q3-singularity}
Let $\D\in\DD_3$ be arbitrary and suppose that \textup{(E7)} holds.

\medskip
\textup{(a)} Then $\L_2$ is $\D$-singular if and only if

\medskip
\noindent
\textup{(F$7_2$)} $\quad\quad Q_2= \left[\begin{matrix}
0&0&0\\
q_{2,1}^2&q_{2,2}^2&1\\
0&0&0\end{matrix}\right],$

\medskip
\noindent
and $\L_3$ is $\D$-singular if and only if

\medskip
\noindent
\textup{(F$7_3$)} $\quad\quad Q_3= \left[\begin{matrix}
0&0&0\\
0&0&0\\
q_{3,1}^3&q_{3,2}^3&1\end{matrix}\right].$

\medskip
\textup{(b)} If $\L\in\S_3[\D]$ and $xy=0$, then $\L\not\in\T_3$.
\end{pro}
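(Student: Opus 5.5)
The plan for (a) is a direct computation with the canonical basis, in the style of the earlier sections. For (b) I would combine (a) with the transitivity test used in Propositions \ref{regularity-cases123} and \ref{regularity-cases45}.

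\textbf{Shape of the $Q_k$.} The type is $J=((1,1),(1,2))$. So Lemma \ref{basis-completion-5dim} gives $q_{1,1}^k=q_{1,2}^k=0$ and $q_{i,3}^k=\d_{i,k}$. Hence the first row of $Q_k$ is $(0,0,\d_{1,k})$, while the entries $q_{i,j}^k$ with $i\in\{2,3\}$, $j\in\{1,2\}$ are free. Recall also that $\L_r\in\S_3[\D]$ exactly when $F_{b_\D(r)}(X_{P_r})\equiv0$ on $\L_r$.

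\textbf{Part (a), the subspace $\L_2$.} Take $X=z_1L_1+z_2L_2+w_2Q_2$ and $P_2=(1,3)$. Then
\[X_{P_2}=\left[\begin{matrix} z_1 & z_2\\ z_1y+z_2b_\D(2)y+w_2q_{3,1}^2 & z_2y+w_2q_{3,2}^2\end{matrix}\right].\]
Expanding $\det X_{P_2}+b_\D(2)z_2(z_2y+w_2q_{3,2}^2)$, the terms $z_1z_2y$ cancel. The terms $z_2^2b_\D(2)y$ cancel against the added ones; this cancellation is exactly why (E7) was functionally $\D$-vanishing. What remains is
\[z_1w_2\,q_{3,2}^2+z_2w_2\,\bigl(b_\D(2)q_{3,2}^2-q_{3,1}^2\bigr).\]
This vanishes identically iff $q_{3,2}^2=q_{3,1}^2=0$, which is precisely (F$7_2$).

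\textbf{Part (a), the subspace $\L_3$.} Use $P_3=(1,2)$ and $b_\D(3)$. The second row of $X_{P_3}$ is $\bigl(z_1x+z_2b_\D(3)x+w_3q_{2,1}^3,\; z_2x+w_3q_{2,2}^3\bigr)$. The same cancellation leaves
\[z_1w_3\,q_{2,2}^3+z_2w_3\,\bigl(b_\D(3)q_{2,2}^3-q_{2,1}^3\bigr).\]
This forces $q_{2,1}^3=q_{2,2}^3=0$, which is (F$7_3$). Both computations are routine.

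\textbf{Part (b), reduction.} Suppose $\L\in\S_3[\D]$. Then its subspaces $\L_2$ and $\L_3$ are $\D$-singular, so by (a) $Q_2$ is supported on the second row and $Q_3$ on the third row. Nothing is needed about $Q_1$ beyond its first row being $(0,0,1)$. As before, I test transitivity through the equations $\la M\wt x,\wt y\ra=0$ for $M$ in the canonical basis.

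\textbf{Part (b), the case $x=0$.} Here $L_1$ and $L_2$ have zero second row, and $Q_3$ has zero second row. Choose $\wt y=e_2$ (so $\et_1=\et_3=0$, $\et_2\ne0$). Then the equations for $L_1,L_2,Q_3$ hold automatically. Only two linear conditions on $(\x_1,\x_2,\x_3)$ remain:
\[q_{2,1}^1\x_1+q_{2,2}^1\x_2=0 \quad\hbox{from } Q_1,\qquad q_{2,1}^2\x_1+q_{2,2}^2\x_2+\x_3=0 \quad\hbox{from } Q_2.\]
Two equations in three unknowns have a nonzero solution, so $\L$ is not transitive.

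\textbf{Part (b), the case $y=0$.} Symmetrically, $L_1,L_2,Q_2$ have zero third row. Taking $\wt y=e_3$ again leaves only the two equations coming from the third rows of $Q_1$ and $Q_3$. This again yields a nonzero solution.

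The only step needing care is bookkeeping the cancellations in (a), and in particular remembering that $b_\D(r)$ is paired with $P_r$. I do not expect a genuine obstacle.
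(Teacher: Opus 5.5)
Your proposal is correct and follows essentially the same route as the paper. In (a) you compute $F_{b_\D(r)}(X_{P_r})$ on $\L_2$ and $\L_3$ and find that the off-row entries of $Q_2$ and $Q_3$ must vanish. In (b) you test transitivity with $e_2$ (when $x=0$) or $e_3$ (when $y=0$), which leaves two linear equations in three unknowns. You also write out the $Q_1$-equation explicitly, which the paper leaves unexpanded.
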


\noindent
{\bf Proof.}
(a): We know by Lemma \ref{basis-completion-5dim} that
\[Q_r=\left[\begin{matrix}
0&0&\d_{1,r}\\
q_{2,1}^r&q_{2,2}^r&\d_{2,r}\\
q_{3,1}^r&q_{3,2}^r&\d_{3,r}\end{matrix}\right] \quad (r\in\N_3),\]
and so the general element of $\L_r$ is of the form
\[X=\left[\begin{matrix}
z_1&z_2& *\\
z_1x+z_2 b_\D(3)x + w_r q_{2,1}^r& z_2x+ w_r q_{2,2}^r&*\\
z_1y + z_2b_\D(2) y +w_r q_{3,1}^r& z_2 y + w_r q_{3,2}^r& *\end{matrix}\right].\]
We have to check when $\L_2$ and $\L_3$ are functionally $\D$-vanishing.

\noindent
For $r=2$, we have
\[F_{b_\D(2)}(X_{P_2})= z_1w_2 q_{3,2}^2 + z_2w_2(-q_{3,1}^2 + b_\D(2) q_{3,2}^2),\]
which is identically zero exactly when $q_{3,2}^2 = q_{3,1}^2=0$, and we obtain (F$7_2$). 

\noindent
For $r=3$, we have
\[F_{b_\D(3)}(X_{P_3})= z_1w_3 q_{2,2}^3 + z_2w_3(-q_{2,1}^3 + b_\D(3) q_{2,2}^3),\]
which is identically zero exactly when $q_{2,2}^3= q_{2,1}^3=0$, and so we obtain (F$7_3$).

(b): If $\L\in\S_3[\D]$, then $\L_2$ and $\L_3$ are functionally $\D$-vanishing, and so (F$7_2$) and (F$7_3$) hold.
Taking $\wt x =\x_1 e_1 + \x_2 e_2 + \x_3 e_3$ and $\wt y= \ol\et_1 e_1 + \ol\et_2 e_2 + \ol\et_3 e_3\ne 0$, let us consider the equations
\begin{itemize}
\item[\textup{(1)}] $\quad 0= \la L_1 \wt x,\wt y\ra = \x_1\et_1+ \x_1x\et_2 + \x_1y\et_3,$
\item[\textup{(2)}] $\quad 0= \la L_2\wt x,\wt y\ra = \x_2 \et_1 +x (b_\D(3)\x_1+\x_2)\et_2 + y (b_\D(2)\x_1+\x_2)\et_3,$
\item[\textup{(3)}] $\quad0= \la Q_1\wt x, \wt y\ra$,
\item[\textup{(4)}] $\quad 0= \la Q_2\wt x,\wt y\ra = (q_{2,1}^2\x_1+ q_{2,2}^2 \x_2 +\x_3)\et_2$,
\item[\textup{(5)}] $\quad 0= \la Q_3\wt x,\wt y\ra = (q_{3,1}^3 \x_1 + q_{3,2}^3\x_2+\x_3)\et_3.$
\end{itemize}
If $x=0$, then choosing $\et_1=\et_3=0$ and $\et_2\ne 0$, the equations (1), (2), (5) evidently hold.
While if $y=0$, then choosing $\et_1=\et_2=0$ and $\et_3\ne 0$, the equations (1), (2), (4) hold evidently.
In both cases the remaining two linear equations have nonzero solution for $\wt x$.

\rightline{$\square$}

\begin{pro}
\label{Q1-singularity}
Let $\D\in\DD_3$ be arbitrary and suppose that $xy\ne0$.
Then $\L_1$ is $\D$-singular if and only if the third element of the canonical basis in $\L$ is of the form

\medskip
\noindent
\textup{(F$7_1$)} $\quad\quad Q_1= \left[\begin{matrix}
0&0&1\\
q_1 & 0&0\\
\frac{y}{x} q_1 & 0&0\end{matrix}\right]\quad (q_1\in\C),$

\medskip

or

\medskip
\noindent
\textup{(F$7_1'$)} $\quad\quad Q_1'= \left[\begin{matrix}
0&0&1\\

q_1'&q_1&0\\

\frac{y}{x} q_1' + \frac{y}{x} b_\D(1)q_1 & \frac{y}{x} q_1&0\end{matrix}\right] \quad (q_1\in\C_0, \; q_1'\in\C).$
\end{pro}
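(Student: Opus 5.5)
The plan is to compute $F_{b_\D(1)}(X_{P_1})$ explicitly for the general element $X$ of $\L_1$. By the remarks preceding Theorem \ref{En-inconsistancy}, $\L_1\in\S_3[\D]$ holds exactly when $\L_1$ is functionally $\D$-vanishing. Since $\L_1\subset M_{P_1}$, this amounts to the identical vanishing of $F_{b_\D(1)}(X_{P_1})$, by Proposition \ref{gamma-determinant} and the argument behind Proposition \ref{functional-conditions-6dim}(a). By Lemma \ref{basis-completion-5dim}, with type $J=((1,1),(1,2))$, we have
\[Q_1=\left[\begin{matrix} 0&0&1\\ a&c&0\\ d&e&0\end{matrix}\right]\]
for some $a,c,d,e\in\C$. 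The general element of $\L_1$ is $X=z_1L_1+z_2L_2+wQ_1$. Since $P_1=(2,3)$, we get
\[X_{P_1}=\left[\begin{matrix} z_1x+z_2b_\D(3)x+wa & z_2x+wc\\ z_1y+z_2b_\D(2)y+wd & z_2y+we\end{matrix}\right].\]

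Next I expand $F_{b_\D(1)}(X_{P_1})=\det X_{P_1}+b_\D(1)(z_2x+wc)(z_2y+we)$ as a polynomial in $z_1,z_2,w$ and collect coefficients. The $z_1^2$ term is absent. The $z_1z_2$ coefficient is $xy-xy=0$. The $z_2^2$ coefficient is $xy\,(b_\D(3)-b_\D(2)+b_\D(1))$, which vanishes by the identity $b_\D(1)+b_\D(3)=b_\D(2)$. So only three coefficients remain:
\[z_1w:\ xe-cy,\qquad z_2w:\ b_\D(2)xe+ay-xd-b_\D(3)cy,\qquad w^2:\ ae-cd+b_\D(1)ce.\]
In the $z_2w$ coefficient I have again used $b_\D(1)+b_\D(3)=b_\D(2)$, through $xe(b_\D(3)+b_\D(1))$ and $cy(b_\D(1)-b_\D(2))=-b_\D(3)cy$.

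Now I use $x\ne0$. The first equation forces $e=\frac{y}{x}c$. Substituting into the second gives $b_\D(2)cy-b_\D(3)cy+ay-xd=0$, that is,
\[d=\frac{y}{x}\,(a+b_\D(1)c).\]
With these values the $w^2$ coefficient becomes $\frac{y}{x}\big(ac-c(a+b_\D(1)c)+b_\D(1)c^2\big)=0$ automatically. So $\L_1$ is $\D$-singular if and only if $e$ and $d$ take the values above, with $a,c$ free. I then split into cases. If $c=0$, then $e=0$ and $d=\frac{y}{x}a$, and setting $q_1=a$ gives (F$7_1$). If $c\ne0$, then setting $q_1=c\in\C_0$ and $q_1'=a$ gives (F$7_1'$). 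The converse direction is just the observation that these forms make all coefficients vanish.

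I expect the main risk to be bookkeeping in the $z_2w$ and $w^2$ coefficients. The needed cancellations rest entirely on $b_\D(1)+b_\D(3)=b_\D(2)$, so I will double-check that expansion carefully. Beyond that, the hypothesis $xy\ne0$ is used only through $x\ne0$, which is needed to solve for $e$ and $d$ (the factor $y$ only appears in the resulting formulas).
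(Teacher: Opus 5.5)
Your proposal is correct and is essentially the paper's proof: you expand $F_{b_\D(1)}(X_{P_1})$ for the general element of $\L_1$ and obtain the same three coefficient conditions. These are the $z_1w$, $z_2w$ and $w^2$ coefficients, with the $z_2w$ one simplified via $b_\D(1)+b_\D(3)=b_\D(2)$, and you solve them using $x\ne0$, just as the paper does; the only cosmetic difference is that you first express $q_{3,2}^1,q_{3,1}^1$ through the free $q_{2,1}^1,q_{2,2}^1$ and then split on $q_{2,2}^1=0$, whereas the paper splits first, and both arrive at (F$7_1$) and (F$7_1'$).
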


\noindent
{\bf Proof.}
We have to verify again when $\L_1$ is functionally $\D$-vanishing.
We have
\begin{eqnarray*}
F_{b_\D(1)}(X_{P_1}) & = & z_1w_1( q_{3,2}^1 x - q_{2,2}^1 y)\\
& & +z_2w_1\Big( b_\D(3)xq_{3,2}^1 + yq_{2,1}^1 -xq_{3,1}^1 -b_\D(2)yq_{2,2}^1\\
 & & \quad\quad\quad \quad\quad\quad\quad\quad\quad\quad  +b_\D(1)x q_{3,2}^1 + b_\D(1) y q_{2,2}^1\Big)\\
 & &+w_1^2 (q_{2,1}^1 q_{3,2}^1 - q_{2,2}^1 q_{3,1}^1 + b_\D(1) q_{2,2}^1 q_{3,2}^1).\end{eqnarray*}
This polynomial is identically zero exactly when
\[q_{3,2}^1 x= q_{2,2}^1 y,\]
\[ q_{3,2}^1 x b_\D(2) - q_{2,2}^1 y b_\D(3) + y q_{2,1}^1 - x q_{3,1}^1 =0,\]
\[q_{2,2}^1 q_{3,1}^1 = q_{3,2}^1 (q_{2,1}^1 + b_\D(1) q_{2,2}^1).\]
Assuming $q_{2,2}^1 = 0$, we infer that $q_{3,2}^1=0$ and $q_{3,1}^1 = \frac{y}{x} q_{2,1}^1$.
Hence, with $q_{2,1}^1=q_1$ we obtain (F$7_1$).
Assuming $q_{2,2}^1 \ne 0$, we infer that 
\[q_{3,2}^1 = \frac{y}{x} q_{2,2}^1 \quad \hbox{and} \quad q_{3,1}^1 = \frac{y}{x} q_{2,1}^1 + \frac{y}{x} b_\D(1) q_{2,2}^1\]
provide the solution of the three equations above.
Taking $q_{2,2}^1=q_1\in\C_0$ and $q_{2,1}^1= q_1'\in\C$ we obtain (F$7_1'$).

\rightline{$\square$}

Let (E$7_0$) denote the form (E7) together with the assumption $xy\ne0$.
Furthermore, let (F7) stand for the canonical basis of $\L$, where (E$7_0$), (F$7_1$), (F$7_2$), (F$7_3$) hold.
Similarly, let (F$7'$) denote the canonical basis of $\L$, where (E$7_0$), (F$7_1'$), (F$7_2$), (F$7_3$) hold.

First we consider the case when (F7) holds, giving a complete characterization of $\D$-singularity.

\begin{thm}
\label{F7-singularity}
Let $\D= \hbox{\rm diag}(b_1,b_2,b_3)\in\DD_3$ be arbitrary, and suppose that \textup{(F7)} holds.
Then $\L$ is $\D$-singular if and only if
\begin{itemize}
\item[\textup{(i)}] $\quad q_{2,2}^2= b_\D(3),$
\item[\textup{(ii)}] $\quad q_{2,1}^2 = b_\D(3)/b_2,$
\item[\textup{(iii)}] $\quad q_{3,2}^3 = b_\D(2),$
\item[\textup{(iv)}] $\quad q_{3,1}^3 = b_\D(2)/b_3,$
\item[\textup{(v)}] $\quad q_1=0.$
\end{itemize}
\end{thm}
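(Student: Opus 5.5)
We will compute $\det\G_\D(X)$ for the general element
\[X= z_1L_1+z_2L_2+w_1Q_1+w_2Q_2+w_3Q_3\]
of $\L$ directly, and require that it vanish identically as a polynomial in $(z_1,z_2,w_1,w_2,w_3)$. Under (F7) the matrix $X$ has the explicit form
\[X=\left[\begin{matrix}
z_1& z_2& w_1\\
z_1x+z_2b_\D(3)x+w_1q_1+w_2q_{2,1}^2 & z_2x+w_2q_{2,2}^2 & w_2\\
z_1y+z_2b_\D(2)y+w_1\frac{y}{x}q_1+w_3q_{3,1}^3 & z_2y+w_3q_{3,2}^3 & w_3\end{matrix}\right],\]
and $\G_\D(X)$ is then read off from the general formula in Section \ref{subspaces-of-matrices}. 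The determinant is a cubic form. Since $\L_1,\L_2,\L_3$ are $\D$-singular by construction of (F7) (Propositions \ref{Q2Q3-singularity} and \ref{Q1-singularity}), every monomial containing only one of the variables $w_1,w_2,w_3$ (with the $z$'s) already vanishes. So only the monomials involving at least two distinct $w_r$ need to be examined: $z_iw_rw_s$, $w_r^2w_s$, and $w_1w_2w_3$.

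\emph{Necessity.} We first use the 3-dimensional subspaces spanned by $\{L_k,Q_r,Q_s\}$, or equivalently set $w_t=0$ and one $z_i=0$. I expect the coefficient of $z_2w_2w_3$ to involve $q_{2,2}^2-b_\D(3)$ and $q_{3,2}^3-b_\D(2)$ through the second columns, using the relations $b_\D(3)=1/b_2-1/b_1$ and $b_\D(2)=1/b_3-1/b_1$. Similarly, the coefficient of $z_1w_2w_3$ should involve $b_2q_{2,1}^2-q_{2,2}^2$ and $b_3q_{3,1}^3-q_{3,2}^3$. To separate these conditions we will use the terms $w_2^2w_3$ and $w_2w_3^2$. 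These come from $\det$ of the $\G_\D$ image of $w_2Q_2+w_3Q_3$, which has first row zero and so needs the $z$'s; hence we will instead pair $Q_2,Q_3$ with $L_1$ or $L_2$ separately, expanding along the first row, which contains only $b_1z_1-z_2$, $b_1z_2-w_1$ and $b_1w_1$. Setting $w_1=0$, the first row becomes $(b_1z_1-z_2,\,b_1z_2,\,0)$, and the determinant is a combination of two $2\times2$ minors of rows 2 and 3. Vanishing for all $z_1,z_2$ will force (i)--(iv) after using $x,y\ne0$ from (E$7_0$). Finally, we take $w_2=w_3=0$ and set $w_1$ together with one of $w_2,w_3$ nonzero. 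The coefficient of $w_1^2w_2$ (or $z_2w_1w_2$) should then be a nonzero multiple of $q_1$, which forces (v).

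\emph{Sufficiency.} We substitute (i)--(v) and verify that the full cubic vanishes. With $q_1=0$, the second and third rows of $\G_\D(X)$ should become linear combinations of the vectors determined by $(z_1,z_2)$ and the $L$-part. The choice $q_{2,1}^2=b_\D(3)/b_2$ and $q_{2,2}^2=b_\D(3)$ should make the $w_2$-contribution to row 2 a multiple of the $x$-contribution, namely proportional to $(b_2b_\D(3)/b_2-b_\D(3),\ b_2b_\D(3)-1,\ b_2)$. I plan to exhibit an explicit nonzero vector in the kernel of $\G_\D(X)^T$, or a linear dependence among the rows, valid for all parameters, which proves singularity more cleanly than expanding the determinant.

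The main obstacle is organising the necessity computation so that each of the five conditions is isolated by a single coefficient, rather than obtaining coupled equations. I will first try the monomials $z_1w_2w_3$, $z_2w_2w_3$ and $z_iw_1w_r$ in that order, invoking the identity $b_\D(1)+b_\D(3)=b_\D(2)$ to simplify. If the coefficients remain coupled, I will fall back on choosing specific test vectors (for instance $z_1=0$, $z_2=1$), reducing each condition to a univariate polynomial identity in the remaining variable.
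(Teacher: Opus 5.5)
Your reduction to monomials containing two distinct $w_r$ is correct, and the sufficiency half works. The natural dependence is among columns: under (i)--(v) the first column of $\Gamma_\Delta(X)$ is $\frac{b_1z_1-z_2}{b_1}(b_1,b_2x,b_3y)^T$, and the second column plus $b_1^{-1}$ times the third is $z_2(b_1,b_2x,b_3y)^T$. This is the paper's factorization $(z_1b_1-z_2)\det Y$ in cleaner form.

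The necessity half, however, fails at both steps you rely on. With $w_1=0$ one gets exactly
\[
\det\Gamma_\Delta(X)=w_2w_3\bigl[(b_1z_1-z_2)(b_3c_2-b_2c_3)-b_1z_2(b_3a_2-b_2a_3)\bigr],\qquad c_k=b_kq^k_{k,2}-1,\ a_k=b_kq^k_{k,1}-q^k_{k,2}.
\]
Here $x$ and $y$ drop out, and you get two relations among four unknowns, not (i)--(iv). Conditions (i) and (iii) come instead from the $z_1w_1w_2$ and $z_1w_1w_3$ coefficients, $-b_3y(b_2-b_1+b_1b_2q^2_{2,2})$ and $b_2x(b_3-b_1+b_1b_3q^3_{3,2})$. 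Nor is any coefficient a nonzero multiple of $q_1$. The $w_1^2w_2$ coefficient is $-b_3\frac yx q_1(b_2-b_1+b_1b_2q^2_{2,2})$, which is killed by (i). The $z_2w_1w_2$ and $z_2w_1w_3$ coefficients only give $q^2_{2,1}=b_\Delta(3)/b_2-q_1/x$ and $q^3_{3,1}=b_\Delta(2)/b_3-q_1/x$.

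This cannot be repaired, because (ii), (iv), (v) are not necessary. Assume (i), (iii) and the two relations just obtained, with $q_1$ arbitrary. Divide rows 2 and 3 of $\Gamma_\Delta(X)$ by $b_2,b_3$ and put $v=(b_1,x,y)^T$. The columns are then $\gamma_3=(b_1w_1,w_2,w_3)^T$, $\gamma_2=z_2v-b_1^{-1}\gamma_3$ and $\gamma_1=\frac{b_1z_1-z_2}{b_1}v+\frac{q_1}{x}(w_1v-\gamma_3)$, so $\det\Gamma_\Delta(X)\equiv0$. For example, $(b_1,b_2,b_3)=(1,2,3)$, $x=y=q_1=1$, $q^2_{2,2}=-\frac12$, $q^2_{2,1}=-\frac54$, $q^3_{3,2}=-\frac23$, $q^3_{3,1}=-\frac{11}9$ gives a $\Delta$-singular $\mathcal L$ of form (F7) with $q_1\ne0$. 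So the ``only if'' direction as stated is false. The correct conditions are those of Theorem~\ref{F7comma-singularity} specialized to $q^1_{2,2}=0$.

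The paper's own proof reaches (iv) and (v) only through a slip. In the combination $z_2L_2+w_1Q_1+w_3Q_3$ the $(1,2)$ entry $z_2$ is omitted, which drops a term $-b_1b_2b_3q_1$ from the $z_2w_1w_3$ coefficient. Restoring it gives the $q_1$-dependent value of $q^3_{3,1}$ above, and the $z_2w_2w_3$ coefficient then vanishes identically rather than equalling $b_1b_2b_3q_1/x$.
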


\noindent
{\bf Proof.}
First we verify the necessity of the conditions (i)--(v).
So let us assume that $\L$ is $\D$-singular.

The linear combination $X= z_1L_1 + w_1 Q_1 +w_2Q_2$ is of the form
\[\left[\begin{matrix}
z_1&0&w_1\\
z_1x+w_1q_1 + w_2 q_{2,1}^2& w_2 q_{2,2}^2&w_2\\
z_1y+w_1\frac{y}{x}q_1&0&0\end{matrix}\right],\]
whence
\[\G_\D(X) = \left[\begin{matrix}
z_1b_1 & -w_1& b_1w_1\\
z_1b_2x + w_1b_2q_1 +w_2(b_2q_{2,1}^2-q_{2,2}^2) & w_2(b_2q_{2,2}^2-1) & b_2w_2\\
z_1b_3y+w_1b_3\frac{y}{x}q_1&0&0\end{matrix}\right]\]
follows; see Section \ref{subspaces-of-matrices}.
Expending the determinant along the third row we obtain
\[\det\G_\D(X) = -z_1w_1w_2 b_3 y(b_2-b_1+b_1b_2 q_{2,2}^2) - w_1^2 w_2 b_3\frac{y}{x} q_1 (b_2-b_1+b_1b_2q_{2,2}^2).\]
This polynomial is zero if and only if $b_2-b_1+b_1b_2q_{2,2}^2=0$, that is when $q_{2,2}^2= b_\D(3)$.

The linear combination $X= z_2L_2 +w_1Q_1+w_2Q_2$ is
\[X= \left[\begin{matrix}
0&z_2&w_1\\
z_2x b_\D(3)+ w_1q_1+w_2q_{2,1}^2& z_2x+ w_2 q_{2,2}^2& w_2\\
z_2yb_\D(2)+ w_1\frac{y}{x}q_1& z_2y&0\end{matrix}\right],\]
and so $\G_\D(X)$ is of the form
\[\left[\begin{matrix}
-z_2& z_2b_1-w_1& b_1w_1\\
z_2x(b_2 b_\D(3)-1)+ w_1b_2q_1+ w_2(b_2q_{2,1}^2-q_{2,2}^2)& z_2b_2x + w_2(b_2q_{2,2}^2-1)& b_2w_2\\
z_2y(b_3b_\D(2)-1) +w_1b_3\frac{y}{x}q_1& z_2yb_3&0\end{matrix}\right].\]
Applying Sarrus' rule we infer that the determinant is of the form
\[\det\G_\D(X)= A z_2^2w_1 + Bz_2^2w_2 + Cz_2w_1^2 + Dz_2w_1w_2 + Ew_1^2w_2.\]
All coefficients here are equal to zero.
In particular, we have
\[D= b_1b_2b_3 y q_{2,1}^2 -(b_1b_3-b_2b_3)yq_{2,2}^2 +b_1b_2b_3\frac{y}{x}q_1+\frac{b_2b_3}{b_1}y -b_3y.\]
Thus, $D=0$ and $q_{2,2}^2=b_\D(3)$ imply
\[q_{2,1}^2= b_\D(3)q_{2,2}^2-\frac{q_1}{x} + \frac{b_\D(3)}{b_1}= \frac{b_\D(3)}{b_2}-\frac{q_1}{x}.\]

The linear combination $X=z_1L_1 + w_1Q_1+w_3Q_3$ is
\[X= \left[\begin{matrix}
z_1&0&w_1\\
z_1x+w_1q_1& 0&0\\
z_1y+ w_1\frac{y}{x}q_1+ w_3q_{3,1}^3&w_3q_{3,2}^3& w_3\end{matrix}\right],\]
whence
\[\G_\D(X)= \left[\begin{matrix}
z_1b_1& -w_1& b_1w_1\\
z_1b_2x + w_1b_2q_1& 0&0\\
z_1b_3y+ w_1b_3\frac{y}{x}q_1 + w_3(b_3q_{3,1}^3-1)& w_3(b_3q_{3,2}^3-1)& b_3w_3\end{matrix}\right].\]
Expanding the determinant along the second row, we obtain
\[\det\G_\D(X)= z_1w_1w_3b_2x(b_3-b_1+b_1b_3q_{3,2}^3) + w_1^2w_3 b_2q_1(b_3-b_1+b_1b_3 q_{3,2}^3).\]
This polynomial is identically zero exactly when $b_3-b_1+b_1b_3 q_{3,2}^3=0$, that is when $q_{3,2}^3= b_\D(2)$.

The linear combination $X=z_2L_2+w_1Q_1+w_3Q_3$ is 
\[X=\left[\begin{matrix}
0&0&w_1\\
z_2xb_\D(3)+w_1q_1& z_2x& 0\\
z_2yb_\D(2)+w_1\frac{y}{x}q_1+w_3q_{3,1}^3& z_2y+ w_3q_{3,2}^3& w_3\end{matrix}\right],\]
hence $\G_\D(X)$ is of the form
\[\left[\begin{matrix}
0&-w_1&b_1w_1\\
z_2x (b_2b_\D(3)-1)+ w_1b_2q_1 & z_2b_2x&0\\
z_2y(b_3b_\D(2)-1)+w_1\frac{y}{x}b_3q_1+w_3(b_3q_{3,1}^3-q_{3,2}^3)& z_2b_3 y+ w_3(b_3q_{3,2}^3-1)& b_3w_3\end{matrix}\right].\]
By Sarrus' rule we infer
\[\det\G_\D(X)= A z_2^2w_1 + B z_2w_1^2 + C z_2w_1w_3 + D w_1^2w_3.\]
A short computation yields 
\[C=-b_1b_2b_3xq_{3,1}^3 +(b_1b_2-b_2b_3)x q_{3,2}^3 +b_2x -\frac{b_2b_3}{b_1}x.\]
Since $C=0$ and $q_{3,2}^3=b_\D(2)$, it follows that
\[q_{3,1}^3 = b_\D(2)q_{3,2}^3 + \frac{b_\D(2)}{b_1} = \frac{b_\D(2)}{b_3}.\]

The linear combination $X= z_2L_2+ w_2Q_2+w_3Q_3$ is
\[X= \left[\begin{matrix}
0&z_2&0\\
z_2xb_\D(3)+ w_2 q_{2,1}^2& z_2x + w_2 q_{2,2}^2 & w_2\\
z_2y b_\D(2) + w_3q_{3,1}^3& z_2y +w_3q_{3,2}^3& w_3\end{matrix}\right],\]
hence $\G_\D(X)$ is of the form
\[\left[\begin{matrix}
-z_2& z_2b_1&0\\
z_2x (b_2b_\D(3)-1)+ w_2(b_2q_{2,1}^2-q_{2,2}^2)& z_2xb_2 + w_2(b_2q_{2,2}^2-1)& w_2b_2\\
z_2y(b_3b_\D(2)-1) + w_3(b_3q_{3,1}^3-q_{3,2}^3)& z_2yb_3 + w_3 (b_3q_{3,2}^3-1) & w_3b_3\end{matrix}\right].\]
We have \[\det\G_\D(X)= Az_2^2w_2 + B z_2^2w_3 + Cz_2w_2w_3,\]
where
\[C= -(b_2q_{2,2}^2-1)b_3 + b_1b_2(b_3q_{3,1}^3-q_{3,2}^3) - b_1(b_2q_{2,1}^2 -q_{2,2}^2)b_3 + b_2(b_3q_{3,2}^3-1).\]
Substituting the values in (i)--(iv), we obtain that $C= b_1b_2b_3 q_1/x$.
Thus, $C=0$ yields that $q_1=0$.

We proceed with the proof of sufficiency.
So let us assume that (i)--(v) hold.
Then the general element $X=z_1L_1 + z_2L_2 +w_1Q_1 +w_2Q_2+ w_3Q_3$ of $\L$ is of the form
\[\left[\begin{matrix}
z_1&z_2&w_1\\
z_1x + z_2b_\D(3)x + w_2 b_\D(3)/b_2& z_2x +w_2 b_\D(3)& w_2\\
z_1y+ z_2b_\D(2)y+ w_3 b_\D(2)/b_3& z_2y+ w_3b_\D(2)& w_3\end{matrix}\right].\]
Hence
\[\G_\D(X)= \left[\begin{matrix}
z_1b_1-z_2& z_2b_1-w_1& b_1w_1\\
z_1b_2x-z_2xb_2/b_1& z_2xb_2 -w_2b_2/b_1 & b_2w_2\\
z_1b_3y-z_2yb_3/b_1& z_2yb_3 - w_3b_3/b_1& b_3w_3\end{matrix}\right].\]
Taking into account that
\[z_1b_2x-z_2x\frac{b_2}{b_1} = \frac{b_2}{b_1}x (z_1b_1-z_2), \quad z_1b_3y-z_2y\frac{b_3}{b_1} = \frac{b_3}{b_1}y (z_1b_1-z_2),\]
we obtain that
\[\det\G_\D(X)= (z_1b_1-z_2) \det Y,\]
where
\[ Y= \left[\begin{matrix}
1& z_2b_1-w_1& b_1w_1\\
b_2x/b_1& z_2xb_2-w_2b_2/b_1& b_2w_2\\
b_3y/b_1& z_2y b_3 -w_3b_3/b_1& b_3w_3\end{matrix}\right].\]
Application of Sarrus' rule yields after a short computation that $\det Y\equiv 0$, and so $\det\G_\D(X)\equiv 0$.

\rightline{$\square$}

Now we turn to the characterization of $\D$-singularity of the form (F$7'$).

\begin{thm}
\label{F7comma-singularity}
Let $\D= \hbox{\rm diag}(b_1,b_2,b_3)\in\DD_3$ be arbitrary, and let us assume that \textup{(F$7'$)} holds.
Then $\L$ is $\D$-singular if and only if
\begin{itemize}
\item[\textup{(i)}] $\quad q_{2,2}^2= b_\D(3) - q_1/x$,
\item[\textup{(ii)}] $\quad q_{2,1}^2 = b_\D(3)/b_2 - q_1'/x,$
\item[\textup{(iii)}] $\quad q_{3,2}^3 =b_\D(2) - q_1/x,$
\item[\textup{(iv)}] $\quad q_{3,1}^3 = b_\D(2)/b_3 - b_\D(1) q_1/x - q_1'/x.$
\end{itemize}
\end{thm}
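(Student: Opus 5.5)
The plan mirrors the proof of Theorem \ref{F7-singularity}. For necessity, assume $\L\in\S_3[\D]$, so every subspace of $\L$ is $\D$-singular. I would test the same four 3-dimensional subspaces spanned by
\[\{L_1,Q_1',Q_2\},\quad \{L_2,Q_1',Q_2\},\quad \{L_1,Q_1',Q_3\},\quad \{L_2,Q_1',Q_3\},\]
and then the subspace spanned by $\{L_2,Q_2,Q_3\}$ as a consistency check. Since (F$7_2$) and (F$7_3$) hold, in each case $\G_\D(X)$ has a row or a column with many zeros, so the determinant is a short cubic form in the coefficients. I then set its coefficients to zero.

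\emph{First subspace.} For $X=z_1L_1+w_1Q_1'+w_2Q_2$, the third row of $X$ is $(z_1y+w_1\frac{y}{x}(q_1'+b_\D(1)q_1),\ w_1\frac{y}{x}q_1,\ 0)$. This is no longer of the form $(\ast,0,0)$, so the computation is a bit longer than in the (F7) case. Collecting the coefficient of $z_1w_1w_2$ should give a linear relation between $q_{2,2}^2$ and $q_1/x$. Using $y\ne0$, I expect this to yield (i): $b_2-b_1+b_1b_2(q_{2,2}^2+q_1/x)=0$.

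\emph{Second subspace.} From $\{L_2,Q_1',Q_2\}$, the coefficient of $z_2w_1w_2$, combined with (i), should yield (ii).

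\emph{Third and fourth subspaces.} Symmetrically, $\{L_1,Q_1',Q_3\}$ should yield (iii), and $\{L_2,Q_1',Q_3\}$ should yield (iv). The extra term $-b_\D(1)q_1/x$ in (iv) comes from the $b_\D(1)q_1$ part of the $(3,1)$ entry of $Q_1'$. Here and in (i)--(iii) I will repeatedly use $b_\D(1)+b_\D(3)=b_\D(2)$ and $b_\D(r)=1/b_{k_2}-1/b_{k_1}$.

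\emph{Fifth subspace.} Unlike in Theorem \ref{F7-singularity}, no extra condition such as $q_1=0$ should appear. So the $z_2w_2w_3$ coefficient from $\{L_2,Q_2,Q_3\}$ must vanish identically once (i)--(iv) are substituted. Checking this is a sanity test for the formulas, not a source of new conditions.

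For sufficiency, assume (i)--(iv) and write the general element $X=z_1L_1+z_2L_2+w_1Q_1'+w_2Q_2+w_3Q_3$. I would compute $\G_\D(X)$ and look for the same factorization as in the (F7) case. There, rows 2 and 3 of the first column were $\frac{b_2}{b_1}x(z_1b_1-z_2)$ and $\frac{b_3}{b_1}y(z_1b_1-z_2)$.

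Now the $w_1$ terms from $Q_1'$ enter, and the natural substitution is $u=z_1b_1-z_2+(\text{multiple of }w_1)$. The aim is to perform row operations: subtract $\frac{y}{x}\cdot\frac{b_3}{b_2}$ times row 2 from row 3. After this, $\G_\D(X)$ should become a matrix in which one row is visibly a combination of the others, or one row vanishes. Conditions (i)--(iv) are exactly what makes the residual third row cancel. Concretely, after the row operation the new third row should be proportional to $w_3-\frac{y}{x}w_2$ times a fixed vector. That vector should in turn be proportional to one of the remaining rows, so the determinant is identically zero.

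The main obstacle is this sufficiency computation. The entries carrying $q_1,q_1'$ spoil the clean factorization of the (F7) case. If no row operation works cleanly, I would fall back on expanding $\det\G_\D(X)$ by Sarrus' rule as a cubic in $(z_1,z_2,w_1,w_2,w_3)$. I would then verify coefficient by coefficient that every monomial vanishes after substituting (i)--(iv). By the $y/x$ proportionality between rows 2 and 3, it should suffice to check the monomials involving $w_1$, since the $q_1=q_1'=0$ part reduces to the identity already established in Theorem \ref{F7-singularity}.
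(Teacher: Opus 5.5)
Your necessity argument is correct. It differs from the paper only in which cross-sections you use.

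\emph{How the paper does it.} The paper uses only the spans of $\{L_1,Q_1',Q_2\}$ and $\{L_1,Q_1',Q_3\}$. The $z_1w_1w_2$ and $z_1w_1w_3$ coefficients give (i) and (iii). Then (ii) and (iv) are read off from the $w_1^2w_2$ and $w_1^2w_3$ coefficients after dividing by $q_1\ne0$.

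\emph{Your route.} Your $L_2$-subspaces also work. For $X=z_2L_2+w_1Q_1'+w_2Q_2$, $\det\G_\D(X)$ contains only the monomials $z_2w_1w_2$ and $w_1^2w_2$. Under (i), the $z_2w_1w_2$ coefficient equals $\frac{yb_3}{x}\bigl(b_1b_2x\,q_{2,1}^2+x(1-b_1/b_2)+b_1b_2q_1'\bigr)$, and its vanishing is exactly (ii). The same computation with $Q_3$ and (iii) yields (iv). This route does not use $q_1\ne0$. Your consistency check on $\{L_2,Q_2,Q_3\}$ does come out to zero.

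\emph{The gap is in sufficiency.} Let $R_1,R_2,R_3$ be the rows of $\G_\D(X)$ for the general $X\in\L$, and put $p=b_2q_1'-q_1$. Your row operation is the right one:
\[R_3-\frac{yb_3}{xb_2}R_2=b_3\Bigl(w_3-\frac{y}{x}w_2\Bigr)v,\qquad v=\Bigl(-\frac{p}{b_2x},\,-\frac{1}{b_1}-\frac{q_1}{x},\,1\Bigr).\]
But your next claim fails: $v$ is not proportional to $R_1$ or $R_2$. Their third entries $b_1w_1$ and $b_2w_2$ vanish at $w_1=0$, resp.\ $w_2=0$, while the rows themselves do not, so the determinant does not yet drop out.

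The missing observation is that the same $v$ is hidden in $R_1$ and $R_2$ as well. The rows $R_k-b_kw_kv$ ($k=1,2,3$) have zero third entry and are multiples of one another:
\[R_2-b_2w_2v=\frac{xb_2}{b_1}(R_1-b_1w_1v),\qquad R_3-b_3w_3v=\frac{yb_3}{b_1}(R_1-b_1w_1v).\]
Hence
\[\G_\D(X)=u\,(R_1-b_1w_1v)+(b_1w_1,b_2w_2,b_3w_3)^{\top}v,\qquad u=\Bigl(1,\frac{xb_2}{b_1},\frac{yb_3}{b_1}\Bigr)^{\top}.\]
This is a sum of two rank-one matrices, so $\det\G_\D(X)\equiv0$.

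With this repair your argument is shorter and more transparent than the paper's splitting $\det\G_\D(X)=(z_1b_1-z_2)\det Y+z_2\det Z+\det V$. It also shows that $\G_\D(\L)$ is the $5$-dimensional space $\{u\,a+c\,v\}$ ($a$ a row, $c$ a column), with $u$ and $v$ fixed.

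\emph{The fallback shortcut is also unjustified.} The $w_1$-free terms still depend on $q_1,q_1'$ through the entries of $Q_2,Q_3$ prescribed by (i)--(iv). So they do not reduce to Theorem \ref{F7-singularity}; only the specialization $q_1=q_1'=0$ does. A brute-force check would have to cover every term involving $q_1$ or $q_1'$.
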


\noindent
{\bf Proof.}
First we prove necessity.
So let us assume that the subspace $\L$ is $\D$-singular.

Let us consider the linear combination $X=z_1L_1 +w_1Q_1+w_2Q_2$:
\[X=\left[\begin{matrix}
z_1& 0& w_1\\
z_1x +w_1q_1' +w_2q_{2,1}^2& w_1q_1 +w_2 q_{2,2}^2 & w_2\\
z_1y + w_1\frac{y}{x} \wt q  & w_1\frac{y}{x}q_1& 0\end{matrix}\right]\quad \hbox{where}\;\; \wt q=q_1' + b_\D(1)q_1.\]
Then $\G_\D(X)$ is of the form
\[\left[\begin{matrix}
z_1b_1 & -w_1& w_1b_1\\
z_1b_2x+ w_1(b_2q_1'-q_1)+ w_2(b_2q_{2,1}^2-q_{2,2}^2)& w_1b_2q_1 + w_2(b_2q_{2,2}^2-1)& w_2b_2\\
z_1b_3y + w_1\frac{y}{x}(b_3\wt q-q_1)& w_1\frac{y}{x} b_3q_1& 0\end{matrix}\right],\]
hence
\[\det\G_\D(X)= A z_1w_1w_2 + B z_1w_1^2 + C w_1^2w_2 + D w_1^3.\]
The coefficient $A$ is:
\[A= -b_1b_2b_3yq_{2,2}^2 + (b_1b_3-b_2b_3)y -b_1b_2b_3y \frac{q_1}{x}.\]
Since $A=0$, it follows that
\[q_{2,2}^2 = b_\D(3) -\frac{q_1}{x}.\]
Using this value of $q_{2,2}^2$, straightforward computation yields that
\[C= \frac{y}{x} b_1b_2b_3 q_1 \left(q_{2,1}^2 - \frac{b_\D(3)}{b_2} + \frac{q_1'}{x}\right).\]
Since $C=0$, we conclude that
\[q_{2,1}^2 = \frac{b_\D(3)}{b_2} - \frac{q_1'}{x}.\]

Let us consider the linear combination $X=z_1L_1+w_1Q_1+w_3Q_3$:
\[X=\left[\begin{matrix}
z_1& 0& w_1\\
z_1x+w_1q_1'& w_1q_1& 0\\
z_1y +w_1\frac{y}{x} \wt q+ w_3q_{3,1}^3 & w_1\frac{y}{x} q_1 +w_3 q_{3,2}^3& w_3\end{matrix}\right].\]
Then $\G_\D(X)$ is of the form
\[\left[\begin{matrix}
z_1b_1 & -w_1 & w_1b_1\\
z_1b_2x + w_1(b_2q_1'-q_1)& w_1b_2q_1& 0\\
z_1b_3y +w_1\frac{y}{x} (b_3\wt q-q_1) + w_3 (b_3q_{3,1}^3 -q_{3,2}^3)& w_1\frac{y}{x} b_3q_1 + w_3(b_3q_{3,2}^3-1)& w_3b_3\end{matrix}\right],\]
hence
\[\det\G_\D(X)= Az_1w_1w_3 + Bz_1w_1^2 +C w_1^2w_3 + Dw_1^3.\]
It is easy to check that
\[A= b_1b_2b_3xq_{3,2}^3 - b_1b_2x + b_2b_3 x +b_1b_2b_3 q_1.\]
Since $A=0$, we obtain
\[q_{3,2}^3 = b_\D(2) - \frac{q_1}{x}.\]
Using this value, straightforward computation yields
\[ C= -b_1b_2b_3q_1 q_{3,1}^3 +\left(\frac{b_1b_2}{b_3} -b_2\right)q_1 - (b_1b_2-b_1b_3)\frac{q_1^2}{x} - b_1b_2b_3 \frac{q_1'q_1}{x}.\]
Since $C=0$, it follows that
\[q_{3,1}^3 = \frac{b_\D(2)}{b_3} - b_\D(1) \frac{q_1}{x} - \frac{q_1'}{x}.\]

Let us proceed with the proof of sufficiency.
So let us assume that the conditions (i)--(iv) hold.
Let us consider the general element $X= z_1L_1 +z_2L_2 +w_1Q_1'+w_2Q_2+w_3 Q_3$ of $\L$:
\[\left[\begin{matrix}z_1&z_2&w_1\\
z_1x+ z_2b_\D(3)x + w_1q_{2,1}^1+w_2q_{2,1}^2& z_2x+w_1q_{2,2}^1+w_2 q_{2,2}^2& w_2\\
z_1y+ z_2b_\D(2)y+w_1q_{3,1}^1+w_3 q_{3,1}^3& z_2y+ w_1q_{3,2}^1 +w_3 q_{3,2}^3& w_3\end{matrix}\right],\]
where
\[q_{2,1}^1=q_1', \; q_{2,2}^1=q_1, \; q_{3,1}^1= \frac{y}{x}q_1'+ \frac{y}{x} b_\D(1)q_1, \; q_{3,2}^1 = \frac{y}{x} q_1,\]
and (i)--(iv) are valid for the entries of $Q_2$ and $Q_3$.
Then $\G_\D(X)$ is the matrix $\wt X= [\wt\x_{i,j}]_3$ with the entries
\begin{eqnarray*}
\wt\x_{1,1} & = & z_1b_1-z_2,\\
\wt\x_{2,1}& = & (z_1b_1-z_2)x\frac{b_2}{b_1} - w_1(q_1-b_2q_1') + w_2 \frac{1}{x}(q_1-b_2q_1'),\\
\wt\x_{3,1}&=& (z_1b_1-z_2)y \frac{b_3}{b_1} - w_1\frac{y}{x} \frac{b_3}{b_2} (q_1-b_2q_1') + w_3 \frac{1}{x} \frac{b_3}{b_2} (q_1-b_2q_1');\\
\wt\x_{1,2}&=& z_2b_1-w_1,\\
\wt\x_{2,2}&=& z_2xb_2+ w_1b_2q_1 - w_2\frac{b_2}{b_1} \left(q_1\frac{b_1}{x}+1\right),\\
\wt\x_{3,2}&=& z_2yb_3 + w_1\frac{y}{x}b_3 q_1 -w_3 \frac{b_3}{b_1}\left(q_1\frac{b_1}{x} +1\right);
\end{eqnarray*}
\[\wt\x_{1,3}= w_1b_1,\quad \wt\x_{2,3} = w_2b_2, \quad \wt\x_{3,3}= w_3b_3.\]
Applying basic rules concerning determinants we can see that $\det\wt X$ splits into the sum
\[\det\wt X= (z_1b_1-z_2) \det Y + z_2 \det Z + \det V,\]
where
\[Y= \left[\begin{matrix}
1&z_2b_1-w_1& w_1b_1\\
xb_2/b_1 & z_2xb_2+w_1b_2q_1-w_2b_2(q_1b_1/x+1)/b_1&w_2b_2\\
yb_3/b_1& z_3yb_3+w_1b_3q_1y/x-w_3b_3(q_1b_1/x+1)/b_1&w_3b_3\end{matrix}\right],\]
\[Z=\left[\begin{matrix}
0&b_1&w_1b_1\\
-w_1(q_1-b_2q_1')+ w_2(q_1-b_2q_1')/x&xb_2&w_2b_2\\
-w_1(q_1-b_2q_1')yb_3/(xb_2)+ w_3(q_1-b_2q_1') b_3/(xb_2)& yb_3&w_3b_3\end{matrix}\right],\]
and
\[V=\left[\begin{matrix}
0&-w_1& w_1b_1\\
v_{2,1}&v_{2,2}&w_2b_2\\
v_{3,1}&v_{3,2}&w_3b_3\end{matrix}\right]\]
with
\begin{eqnarray*}
v_{2,1}&=&-w_1(q_1-b_2q_1')+w_2(q_1-b_2q_1')/x,\\
v_{2,2}&=& w_1b_2q_1-w_2(q_1b_1/x+1)b_2/b_1,\\
v_{3,1}&=&-w_1(q_1-b_2q_1') yb_3/(xb_2)+ w_3(q_1-b_2q_1') b_3/(xb_2),\\
v_{3,2}&=& w_1b_3q_1y/x - w_3(q_1b_1/x+1)b_3/b_1.
\end{eqnarray*}
We infer
\[\det Y= \frac{1}{b_1} \det\left[\begin{matrix}
b_1&-w_1&w_1b_1\\
xb_2& w_1b_2q_1-w_2(q_1b_1/x+1)b_2/b_1&w_2b_2\\
yb_3& w_1b_3q_1y/x-w_3(q_1b_1/x+1)b_3/b_1&w_3b_3\end{matrix}\right]\]
\begin{eqnarray*} &=&\frac{1}{b_1} w_1\det\left[\begin{matrix}
b_1&-1&w_1b_1\\
xb_2& b_2q_1&w_2b_2\\
yb_3& b_3q_1y/x& w_3b_3\end{matrix}\right]\\
 & &-\frac{1}{b_1^2} \left(q_1\frac{b_1}{x}+1\right) \det\left[\begin{matrix}
b_1&0&w_1b_1\\
xb_2&w_2b_2&w_2b_2\\
yb_3&w_3b_3&w_3b_3\end{matrix}\right].
\end{eqnarray*}
Applying Laplace expansion along the third column and along the first row, respectively, we obtain that $\det Y=0$.

Similarly, we have
\begin{eqnarray*}
\det Z&=& b_1(q_1-b_2q_1') \det\left[\begin{matrix}
0&1&w_1\\
-w_1+w_2/x& xb_2&w_2b_2\\
-w_1yb_3/(xb_2)+ w_3 b_3/(xb_2)&yb_3&w_3b_3\end{matrix}\right]\\
 &=& b_1(q_1-b_2q_1') \det\left[\begin{matrix}
0&1&0\\
-w_1+w_2/x& xb_2& w_2b_2-w_1xb_2\\
-w_1 yb_3/(xb_2)+w_3b_3/(xb_2)& y b_3& w_3b_3-w_1yb_3\end{matrix}\right],
\end{eqnarray*}
which yields after some computation that $\det Z=0$.

Finally, the expression

\medskip
\noindent
$\;\;\det V= w_1(q_1-b_2q_1') \times$

\medskip
$\quad
 \times\det\left[\begin{matrix}
0&-1&0\\
-w_1+w_2/x& *& w_1b_1b_2q_1 - w_2 q_1b_1b_2/x\\
-w_1yb_3/(xb_2) + w_3 b_3/(xb_2)&*&w_1b_1b_3q_1y/x-w_3 q_1b_1b_3/x\end{matrix}\right]$

\medskip
\noindent
results in that $\det V=0$.

Therefore, $\det\wt X=0$ and so the subspace $\L$ is $\D$-singular.

\rightline{$\square$}

We say that \emph{the canonical basis of $\L$ is of the form \textup{(G7)}}, if it is of the form (F7), satisfying also the conditions (i)--(v) of Theorem \ref{F7-singularity}.
Similarly, \emph{the canonical basis is of the form \textup{(G$7'$)}}, if it is of the form (F$7'$), satisfying the conditions (i)--(iv) of Theorem \ref{F7comma-singularity}.

\section{Consistency with transitivity}
\label{consistancy}

We are going to show that if (G7) or (G$7'$) holds, then the subspace is transitive.

\begin{pro}
\label{G7-transitivity}
Let $\D= \hbox{\textup{diag}}(b_1,b_2,b_3)\in\DD_3$ be arbitrary.
If \textup{(G7)} holds, then $\L$ is transitive.
\end{pro}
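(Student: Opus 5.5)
Transitivity of the $5$-dimensional subspace $\L$ means strict transitivity: for every nonzero $\wt x=\x_1e_1+\x_2e_2+\x_3e_3$ the five vectors $L_1\wt x, L_2\wt x, Q_1\wt x, Q_2\wt x, Q_3\wt x$ span $\E_3$. Equivalently, as in the earlier tests, no nonzero $\wt y$ annihilates all of them. I will therefore write down the canonical basis in the form (G7) explicitly and compute these five vectors. I abbreviate $\beta=b_\D(3)$ and $\gamma=b_\D(2)$. By (i)--(v) of Theorem \ref{F7-singularity} the basis is
\[Q_1=E_{1,3},\quad Q_2=E_{2,3}+\tfrac{\beta}{b_2}E_{2,1}+\beta E_{2,2},\quad Q_3=E_{3,3}+\tfrac{\gamma}{b_3}E_{3,1}+\gamma E_{3,2},\]
together with $L_1,L_2$ as in (E$7_0$), where $xy\neq0$. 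Hence
\[L_1\wt x=\x_1(e_1+xe_2+ye_3),\quad L_2\wt x=\x_2e_1+x(\beta\x_1+\x_2)e_2+y(\gamma\x_1+\x_2)e_3,\]
and $Q_r\wt x=\alpha_r e_r$, where
\[\alpha_1=\x_3,\quad \alpha_2=\beta(\x_1/b_2+\x_2)+\x_3,\quad \alpha_3=\gamma(\x_1/b_3+\x_2)+\x_3.\]
I will use that $\beta,\gamma\neq0$. I will also use $\beta\neq\gamma$, which holds because $b_2\neq b_3$.

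The proof then splits according to which of $\alpha_1,\alpha_2,\alpha_3$ vanish. If all three are nonzero, the vectors $Q_r\wt x$ already span $\E_3$. If exactly one vanishes, say $\alpha_k=0$, it suffices that $L_1\wt x$ or $L_2\wt x$ has nonzero $k$-th coordinate. If this fails, the vanishing of $\alpha_k$ and of those coordinates gives a small linear system in $\x_1,\x_2,\x_3$. I will show it forces $\wt x=0$, using $xy\ne0$ and $\beta,\gamma\neq0$. For instance, take $k=1$: then $\x_3=0$, and $\x_1=\x_2=0$ would follow. If two vanish, say $\alpha_k=\alpha_l=0$, then I need the $2\times2$ minor of $[L_1\wt x\; L_2\wt x]$ in rows $k,l$ to be nonzero. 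A sample computation: for $\x_3=0$ and $\x_1/b_2+\x_2=0$, the $(1,2)$-minor is $x\beta\x_1^2$, which is nonzero unless $\wt x=0$. The other pairs give minors of the form $x y(\gamma-\beta)\x_1\x_2$ or $\x_1$-multiples of $y\gamma\x_1$, $x\beta\x_1$. Each is combined with the linear relations coming from $\alpha_k=\alpha_l=0$.

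The remaining case is $\alpha_1=\alpha_2=\alpha_3=0$. This gives $\x_3=0$ and $\x_1/b_2+\x_2=0=\x_1/b_3+\x_2$. Since $b_2\ne b_3$, it forces $\wt x=0$, so it does not occur.

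The main obstacle is the two-vanishing case $\alpha_2=\alpha_3=0$ with $\x_3\ne0$. There one must check that the $(2,3)$-minor
\[x y\bigl[\x_1(\gamma\x_1+\x_2)-\x_1(\beta\x_1+\x_2)\bigr]=xy(\gamma-\beta)\x_1^2\]
is nonzero. This in turn needs $\x_1\neq0$. I will get that from the two linear equations: subtracting them gives $\x_1(\beta/b_2-\gamma/b_3)+(\beta-\gamma)\x_2=0$, and $\x_1=0$ would force $\x_2=0$ and then $\x_3=0$. The identities relating $b_\D(r)$ to $1/b_i$ are used exactly here, so I expect this verification to be the delicate point, and I will carry it out carefully with explicit expressions.
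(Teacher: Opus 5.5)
Your proof is correct, but it runs the transitivity test in the opposite direction from the paper.

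The paper fixes a nonzero $(\et_1,\et_2,\et_3)$ and solves the resulting linear system in $\wt x$. It splits into cases according to whether the two linear forms $\et_1+x\et_2+y\et_3$ and $b_\D(3)x\et_2+b_\D(2)y\et_3$ coming from $L_1,L_2$ vanish. In each case it shows that only $\wt x=0$ survives.

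You instead fix $\wt x\neq 0$. You use that under (G7), in particular because $q_1=0$, each $Q_r\wt x=\alpha_r e_r$ lies on a coordinate line. Everything then reduces to a rank condition on the rows of $[L_1\wt x\; L_2\wt x]$ indexed by the vanishing $\alpha_r$.

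Your computations check out:
\begin{itemize}
\item The relevant minors are $x\beta\x_1^2$, $y\gamma\x_1^2$ and $xy(\gamma-\beta)\x_1^2$. Your passing mention of $xy(\gamma-\beta)\x_1\x_2$ is a slip that you correct later.
\item In each two-vanishing case, $\x_1=0$ together with the vanishing $\alpha$'s forces $\wt x=0$.
\item The all-vanishing case is excluded by $b_2\neq b_3$.
\end{itemize}
The case you flag as the main obstacle is not delicate. Only $\beta\neq\gamma$ is used there; no further identity between $b_\D(r)$ and the $1/b_i$ is needed.

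What your route buys is transparency. Every case is a one-line minor, and it is visible that the only nondegeneracy conditions used are $xy\neq 0$ and the distinctness of $b_1,b_2,b_3$, through $\beta,\gamma\neq 0$ and $\beta\neq\gamma$.

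What the paper's dual route buys is uniformity. It does not rely on the $Q_r$ having images in coordinate lines. So the same case split carries over to (G$7'$) in Proposition \ref{G7comma-transitivity}, where $Q_1'\wt x$ generally has components off $\C e_1$ and your reduction would need reworking. It also matches the annihilating-vector tests the paper uses elsewhere to prove non-transitivity.
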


\noindent
{\bf Proof.}
We have to examine the system of equations
\begin{itemize}
\item[\textup{(1)}] $\quad 0= \x_1(\et_1+x\et_2+y\et_3),$
\item[\textup{(2)}] $\quad 0 = \x_1(b_\D(3)x\et_2+ b_\D(2)y\et_3) + \x_2(\et_1+ x\et_2+y\et_3),$
\item[\textup{(3)}] $\quad 0= \x_3\et_1,$
\item[\textup{(4)}] $\quad 0= \left(\frac{b_\D(3)}{b_2} \x_1 + b_\D(3)\x_2 + \x_3\right) \et_2,$
\item[\textup{(5)}] $\quad 0= \left(\frac{b_\D(2)}{b_3}\x_1+ b_\D(2)\x_2+ \x_3\right)\et_3$.
\end{itemize}
We are going to show that given any $(\et_1,\et_2,\et_3)\ne(0,0,0)$, this system has only the trivial $(0,0,0)$ solution for $(\x_1,\x_2,\x_3)$.
Recall that $xy\ne0$.

If $\et_1+x\et_2+y\et_3\ne 0$, then (1) and (2) imply $\x_1=\x_2=0$.
Hence (3), (4), (5) yield that $\x_3=0$.

Suppose from now on that $\et_1+x\et_2+ y\et_3=0$.
Then (1) is automatic and (2) takes the form 
\[\x_1(b_\D(3)x\et_2 + b_\D(2)y\et_3)=0.\]
If $b_\D(3)x\et_2+ b_\D(2)y\et_3\ne 0$, then $\x_1=0$.
Assuming also $\et_1\ne0$, (3) yields $\x_3=0$.
Since $\et_1+x\et_2+y\et_3=0$, it follows that $\et_2\ne 0$ or $\et_3\ne0$, and so (4) or (5) implies $\x_2=0$.
If $\et_1=0$, then (3) is automatic.
Furthermore $\et_2= -\et_3y/x$ cannot be zero.
Thus (4) and (5) take the form
\[b_\D(3)\x_2+\x_3=0, \quad b_\D(2)\x_2+\x_3=0.\]
Since $b_\D(2)\ne b_\D(3)$, it follows that $\x_2=\x_3=0$.

Finally, let us assume that
\[\et_1+x\et_2+y\et_3=0 \quad\hbox{and}\quad b_\D(3)x\et_2+ b_\D(2)y\et_3=0.\]
Then (1) and (2) evidently hold.
Furthermore
\[\et_2=-\frac{b_\D(2)}{b_\D(3)} \frac{y}{x}\et_3 \quad \hbox{and} \quad  \et_1= -x\et_2-y\et_3= \frac{b_\D(1)}{b_\D(3)}y\et_3\]
are nonzero.
Hence (3) yields $\x_3=0$, and so (4) and (5) reduce to
\[\x_1+ b_2\x_2=0 \quad \hbox{and} \quad \x_1 + b_3\x_2=0.\]
Since $b_2\ne b_3$, it follows that $\x_1=\x_2=0.$

\rightline{$\square$}

\begin{pro}
\label{G7comma-transitivity}
Let $\D= \hbox{\textup{diag}}(b_1,b_2,b_3)\in\DD_3$ be arbitrary.
If \textup{(G$7'$)} holds, then $\L$ is transitive.
\end{pro}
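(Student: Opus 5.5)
The plan is to mimic the proof of Proposition \ref{G7-transitivity}. First I write out $\la X\wt x,\wt y\ra$ for the five canonical basis matrices. The matrices $L_1,L_2$ are as in (E$7_0$), with $xy\ne0$, and $Q_1'$ is as in (F$7_1'$), with $q_1\ne0$. The matrices $Q_2,Q_3$ are as in (F$7_2$), (F$7_3$), with entries given by (i)--(iv) of Theorem \ref{F7comma-singularity}. This gives the system
\begin{itemize}
\item[\textup{(1)}] $\quad 0=\x_1(\et_1+x\et_2+y\et_3)$,
\item[\textup{(2)}] $\quad 0=\x_1(b_\D(3)x\et_2+b_\D(2)y\et_3)+\x_2(\et_1+x\et_2+y\et_3)$,
\item[\textup{(3)}] $\quad 0=\x_3\et_1+(q_1'\x_1+q_1\x_2)(\et_2+\frac{y}{x}\et_3)+\frac{y}{x}b_\D(1)q_1\x_1\et_3$,
\item[\textup{(4)}] $\quad 0=(q_{2,1}^2\x_1+q_{2,2}^2\x_2+\x_3)\et_2$,
\item[\textup{(5)}] $\quad 0=(q_{3,1}^3\x_1+q_{3,2}^3\x_2+\x_3)\et_3$.
\end{itemize}
For every $(\et_1,\et_2,\et_3)\ne(0,0,0)$ we must show that this system forces $\x_1=\x_2=\x_3=0$. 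Write $A=\et_1+x\et_2+y\et_3$ and $B=b_\D(3)x\et_2+b_\D(2)y\et_3$, and split into the same three cases as before.

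\emph{Case $A\ne0$.} Equations (1) and (2) give $\x_1=\x_2=0$. Then (3), (4), (5) reduce to $\x_3\et_k=0$ for $k=1,2,3$, so $\x_3=0$.

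\emph{Case $A=0$, $B\ne0$.} Equation (2) gives $\x_1=0$. I split further according to which of $\et_2,\et_3$ vanish; both cannot vanish, since then $A=0$ would force $\et_1=0$.
\begin{itemize}
\item[$\bullet$] If $\et_2\et_3\ne0$, then (4) and (5) give $q_{2,2}^2\x_2+\x_3=0=q_{3,2}^3\x_2+\x_3$. Since $q_{2,2}^2-q_{3,2}^3=b_\D(3)-b_\D(2)=-b_\D(1)\ne0$, this yields $\x_2=\x_3=0$.
\item[$\bullet$] If $\et_3=0$, then $\et_1=-x\et_2$. Substituting $\x_3=-q_{2,2}^2\x_2$ from (4) into (3) gives $\x_2\et_2(q_1+xq_{2,2}^2)=0$. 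By (i) we have $q_1+xq_{2,2}^2=xb_\D(3)\ne0$.
\item[$\bullet$] If $\et_2=0$, then $\et_1=-y\et_3$, and the same substitution gives $\x_2\et_3\frac{y}{x}(q_1+xq_{3,2}^3)=0$. By (iii) we have $q_1+xq_{3,2}^3=xb_\D(2)\ne0$.
\end{itemize}
In each subcase $\x_2=0$, and then $\x_3=0$ follows from (4) or (5). This is the mechanism I expect throughout: the $q_1/x$ corrections in (i) and (iii) are precisely what cancels the extra $q_1$ terms contributed by $Q_1'$.

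\emph{Case $A=B=0$.} As in Proposition \ref{G7-transitivity}, we get $\et_2=-\frac{b_\D(2)}{b_\D(3)}\frac{y}{x}\et_3$ and $\et_1=\frac{b_\D(1)}{b_\D(3)}y\et_3$, all nonzero. Equations (1) and (2) hold automatically, and (3), (4), (5) become a homogeneous $3\times3$ linear system in $(\x_1,\x_2,\x_3)$. This is the main obstacle: I must show that its determinant is nonzero.

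I would first use (4) and (5) to express $\x_3$ in two ways. Subtracting them gives
\[
(q_{2,1}^2-q_{3,1}^3)\x_1+(q_{2,2}^2-q_{3,2}^3)\x_2=0 .
\]
By (i)--(iv), $q_{2,2}^2-q_{3,2}^3=-b_\D(1)$. Moreover, $q_{2,1}^2-q_{3,1}^3$ equals $b_\D(3)/b_2-b_\D(2)/b_3+b_\D(1)q_1/x$, where the $q_1'$ terms cancel. One checks that $b_\D(3)/b_2-b_\D(2)/b_3=-b_\D(1)/b_1$. Hence the subtracted equation reads $\x_2=(q_1/x-1/b_1)\x_1$.

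Next I substitute $\x_2$ and $\x_3$ into (3), divided by $\et_3$, and compute the resulting scalar coefficient of $\x_1$. I hope to show that all $q_1$ and $q_1'$ terms cancel. What should remain is the corresponding expression from the (G7) case, which there reduced to the independent equations $\x_1+b_2\x_2=0$ and $\x_1+b_3\x_2=0$, so the remaining coefficient should be a nonzero multiple of $(b_2-b_3)$. That gives $\x_1=0$, and hence $\x_2=\x_3=0$. If the cancellation fails, I would instead check nonvanishing of this $3\times3$ determinant directly, using the identity $b_\D(1)+b_\D(3)=b_\D(2)$.
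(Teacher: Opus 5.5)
Your system (1)--(5) is correct, and your handling of the cases $A\ne0$ and $A=0\ne B$ is sound and follows the paper's case split; your three subcases there are more explicit than the paper's ``similarly''. The gap is in the case $A=B=0$. This is exactly where the values (i)--(iv) are decisive, and you leave it as a hope (``I hope to show\dots'', ``should remain\dots''). Moreover, the one concrete step you give there is false. With $b_\D(1)=1/b_3-1/b_2$, $b_\D(2)=1/b_3-1/b_1$ and $b_\D(3)=1/b_2-1/b_1$, one has
\[
\frac{b_\D(3)}{b_2}-\frac{b_\D(2)}{b_3}=-b_\D(1)\Big(\frac1{b_2}+\frac1{b_3}-\frac1{b_1}\Big),
\]
not $-b_\D(1)/b_1$. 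For $b_1=1$, $b_2=2$, $b_3=3$ the left side is $-1/36$, while $-b_\D(1)/b_1=1/6$. So subtracting (4) and (5) actually gives $\xi_2=(q_1/x-1/b_2-1/b_3+1/b_1)\xi_1$. Also, your prediction of a final coefficient proportional to $b_2-b_3$ is not what your elimination order produces.

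The missing computation is short, and it is exactly the paper's evaluation of $\det C$. In this case $\eta_1=\frac{b_\D(1)}{b_\D(3)}y\eta_3$ and $\eta_2+\frac yx\eta_3=-\frac{b_\D(1)}{b_\D(3)}\frac yx\eta_3$, with $b_\D(1)y\eta_3\ne0$. Hence (3) is equivalent to $\xi_3=\frac{q_1'-b_\D(3)q_1}{x}\xi_1+\frac{q_1}{x}\xi_2$. Substitute this into (4) and (5), divided by $\eta_2$ and $\eta_3$ respectively. The $q_1'$-terms and the $q_1\xi_2$-terms cancel, and using $b_\D(1)+b_\D(3)=b_\D(2)$ one is left with
\[
b_\D(3)\Big(\big(\tfrac1{b_2}-\tfrac{q_1}{x}\big)\xi_1+\xi_2\Big)=0,\qquad b_\D(2)\Big(\big(\tfrac1{b_3}-\tfrac{q_1}{x}\big)\xi_1+\xi_2\Big)=0 .
\]
Hence $(1/b_2-1/b_3)\xi_1=0$, and $b_2\ne b_3$ gives $\xi_1=\xi_2=\xi_3=0$. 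This is the paper's row reduction, which yields $\det C=b_\D(2)b_\D(3)(1/b_3-1/b_2)\ne0$. Your own route also closes once the identity is corrected: inserting the correct $\xi_2$, and $\xi_3$ from (4), into the normalized (3) leaves $b_\D(2)b_\D(3)\,\xi_1=0$.
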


\noindent
{\bf Proof.}
Now the system of equations we have to study is the following:
\begin{itemize}
\item[\textup{(1)}] $\quad 0= \x_1(\et_1+x\et_2+y\et_3)$,
\item[\textup{(2)}] $\quad 0= \x_1(b_\D(3)x\et_2 + b_\D(2)y\et_3) + \x_2(\et_1+x\et_2+y\et_3)$,
\item[\textup{($3'$)}] $\quad 0= (q_1'\x_1+q_1\x_2)\et_2 + \frac{y}{x}\big((q_1'+b_\D(1)q_1)\x_1+q_1\x_2\big)\et_3+ \x_3\et_1$,
\item[\textup{(4)}] $\quad 0=\left(\left(\frac{b_\D(3)}{b_2}-\frac{q_1'}{x}\right)\x_1+ \left(b_\D(3)-\frac{q_1}{x}\right)\x_2 +\x_3\right)\et_2$,
\item[\textup{(5)}] $\quad 0= \left( \left(\frac{b_\D(2)}{b_3} -b_\D(1) \frac{q_1}{x}-\frac{q_1'}{x}\right)\x_1 + \left(b_\D(2)-\frac{q_1}{x}\right)\x_2 +\x_3\right)\et_3$.
\end{itemize}
We assume that $(\et_1,\et_2,\et_3)\ne (0,0,0)$.

If $\et_1+x\et_2+y\et_3\ne0$, then (1) and (2) yield $\x_1=\x_2=0$.
Since $\et_i\ne0$ is true for some $i\in\N_3$, one of the equations ($3'$), (4), (5) implies that $\x_3=0$.

Suppose that
\[\et_1+x\et_2+y\et_3=0 \quad \hbox{and} \quad b_\D(3)x\et_2 + b_\D(2)y\et_3\ne0.\]
Then (1) is valid trivially, and (2) yields $\x_1=0$.
Thus, the remaining three equations take the form
\begin{itemize}
\item[\textup{($3'$)}] $\quad 0= (q_1\x_2-x\x_3)\et_2 + \left(\frac{y}{x}q_1\x_2-y\x_3\right)\et_3$,
\item[\textup{(4)}] $\quad 0= \left(\left(b_\D(3)-\frac{q_1}{x}\right)\x_2 +\x_3\right)\et_2$,
\item[\textup{(5)}] $\quad 0= \left(\left(b_\D(2)-\frac{q_1}{x}\right)\x_2+\x_3\right)\et_3.$
\end{itemize}
If $\et_2=0\ne\et_3$, then this system reduces to
\begin{itemize}
\item[\textup{($3'$)}] $\quad 0= \frac{q_1}{x} \x_2 -\x_3$,
\item[\textup{(5)}] $\quad 0= \left(b_\D(2)- \frac{q_1}{x}\right)\x_2+\x_3,$
\end{itemize}
which has only the zero solution $\x_2=\x_3=0$.
The cases $\et_2\ne0=\et_3$ and $\et_2\ne0\ne\et_3$ can be treated similarly, leading to the trivial solution $\x_2=\x_3=0$.

Finally, suppose that
\[\et_1+x\et_2+y\et_3=0 \quad \hbox{and} \quad b_\D(3)x\et_2+b_\D(2) y\et_3=0.\]
Then we have
\[\et_2 = - \frac{b_\D(2)}{b_\D(3)} \frac{y}{x}\et_3 \quad \hbox{and} \quad \et_1= \frac{b_1(1)}{b_\D(3)} y\et_3.\]
It follows that $\et_3\ne0$, and so $\et_1\ne0\ne\et_2$ also hold.
Hence $\et_2$ and $\et_3$ can be cancelled in (4) and (5), respectively.
Furthermore, substituting the previous expressions of $\et_1$ and $\et_2$ into ($3'$), this equation can be written in the form
\begin{itemize}
\item[\textup{($3'$)}] $\quad 0= \left(\frac{q_1'}{x} - b_\D(3) \frac{q_1}{x}\right) \x_1 + \frac{q_1}{x}\x_2 -\x_3$.
\end{itemize}
The coefficient matrix of the system ($3'$), (4), (5) is
\[C=\left[\begin{matrix}
q_1'/x -b_\D(3)q_1/x & q_1/x& -1\\
b_\D(3)/b_2-q_1'/x & b_\D(3)-q_1/x& 1\\
b_\D(2)/b_3-b_\D(1)q_1/x-q_1'/x& b_\D(2)- q_1/x&1\end{matrix}\right].\]
We can easily evaluate its determinant:
\begin{eqnarray*}
\det C &=& \det\left[\begin{matrix}
q_1'/x-b_\D(3)q_1/x & q_1/x& -1\\
b_\D(3)/b_2-b_\D(3)q_1/x& b_\D(3)& 0\\
b_\D(2)/b_3-b_\D(2)q_1/x& b_\D(2)&0\end{matrix}\right]\\
 &= & -b_\D(2)b_\D(3) \det\left[\begin{matrix}
1/b_2-q_1/x & 1\\
1/b_3-q_1/x& 1\end{matrix}\right]\\
 &=& b_\D(2)b_\D(3) \left(1/b_3-1/b_2\right).
\end{eqnarray*}
Since $b_2\ne b_3$, it follows that $\det C\ne0$, and so the system has only the zero solution $(\x_1,\x_2,\x_3)= (0,0,0)$.

\rightline{$\square$}

\section{Conclusions}
\label{conclusions}

Now we summarize the results we have achieved up till now.
Let us be given a subspace $\L\in \hbox{Lat} M_3[\C]$ and a matrix $\D\in\DD_3$.
We recall that
\[\L_0=\L\cap\M_3= \left\{X=[\x_{i,j}]_3\in\L: \x_{1,3}=\x_{2,3}=\x_{3,3}=0\right\}.\]
Notice that $\dim\L_0\ge2$, when $\dim\L=5$.
We are going to collect the conditions resulting that $\L$ is not transitive.

We can make distinctions on the basis of dimension.
Recall that $\dim\L\le 6$ is true, whenever $\L\in\S_3[\D]$ (see Proposition \ref{possible-dimension}).

\begin{pro}
\label{dimension-condition}
Suppose that $\L\in\S_3[\D]$.
Then $\L$ is not transitive in any of the following cases:
\begin{itemize}
\item[\textup{(D1)}] $\quad \dim\L=6,$
\item[\textup{(D2)}] $\quad \dim\L<5$,
\item[\textup{(D3)}] $\quad \dim\L=5$ and $\dim \L_0>2$.
\end{itemize}
\end{pro}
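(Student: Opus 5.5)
The plan is to treat the three cases separately, each by contradiction: assume $\L\in\S_3[\D]$ and $\L\in\T_3$, and derive a contradiction from results already established.

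\emph{Case (D1).} Suppose $\dim\L=6$ and $\L\in\T_3$. By Theorem \ref{regularity-6dimension} we get $\L\in\R_3^+\subset\R_3[\D]$. So some $X\in\L$ has $\det\G_\D(X)\ne0$. This contradicts $\D$-singularity, since $\S_3[\D]$ and $\R_3[\D]$ are complementary by definition.

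\emph{Case (D2).} Suppose $\dim\L<5$. Then $\L\notin\T_3$ immediately by Proposition \ref{possible-dimension}(b), which is Azoff's bound. Singularity is not even needed here.

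\emph{Case (D3).} Suppose $\dim\L=5$ and $\dim\L_0>2$, and assume $\L$ is transitive. As in Proposition \ref{3dimensional-L0}, the map $\Lambda\co\L\to\E_3$, $X\mapsto Xe_3$, is surjective by transitivity. Its kernel is $\L_0$, so $\dim\L_0=\dim\L-3=2$. This contradicts $\dim\L_0>2$.

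For completeness of the remark before the statement: in general $\dim\L_0\ge\dim\L-3$, so $\dim\L_0\ge2$ when $\dim\L=5$. Hence (D3) is exactly the case where $\Lambda$ fails to be surjective.

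The only step with real content is (D1), and it rests entirely on Theorem \ref{regularity-6dimension}. The one thing to check is that strong regularity implies $\D$-regularity for the given $\D$. That holds because $\R_3^+$ is the intersection of the classes $\R_3[\D]$ over all $\D\in\DD_3$.
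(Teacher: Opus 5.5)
Your proposal is correct and follows essentially the same route as the paper. The paper handles (D1) by Theorem \ref{regularity-6dimension}, (D2) by Proposition \ref{possible-dimension}(b), and (D3) by the surjectivity argument for $X\mapsto Xe_3$ from the proof of Proposition \ref{3dimensional-L0}; you use the same three ingredients, and you additionally spell out that $\R_3^+\subset\R_3[\D]$ is the complement of $\S_3[\D]$, which the paper leaves implicit.
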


\noindent
{\bf Proof.}
If $\dim\L=6$, then Theorem \ref{regularity-6dimension} states the result.
While if $\dim\L<5$, then Proposition \ref{possible-dimension} can be applied.
In connection with (D3) we refer to the proof of Lemma \ref{3dimensional-L0}.

\rightline{$\square$}

It remains to study the case, when
\begin{itemize}
\item[\textup{(D4)}] $\quad\dim\L=5$ and $\dim\L_0=2,$
\end{itemize}
what we shall assume in the sequel.

We can make distinctions also on the basis of the properties of $\L_0$.
In view of Proposition \ref{regularity-condition} we know that if $\L\in\T_3$ and $\L_0$ is not functionally $\D$-vanishing, then $\L\in\R_3[\D]$.
Equivalently, $\L\in\S_3[\D]$ implies that $\L\not\in \T_3$ or $\L_0$ is functionally $\D$-vanishing.
Consequently, we can state that the conditions $\L\in\S_3[\D]$ and that $\L_0$ is not functionally $\D$-vanishing imply $\L\not\in\T_3$.
The canonical basis $(L_1, L_2)$ of $\L_0$ plays a significant role here.
In Section \ref{5dim-transitive} we have proved that $\L_0$ is functionally $\D$-vanishing if and only if $(L_1,L_2)$ is of the form (En) for some $n\in\N_{14}$.
Furthermore, it turned out in Section \ref{exceptional-forms} that if (En) holds with an $n\in\N_{14}\setminus\{7\}$, then $\L\not\in\T_3$.
It has been shown also that if (E7) holds but (E$7_0$) fails, then $\L\not\in\T_3$; see Proposition \ref{Q2Q3-singularity}.
Consequently, we obtain

\begin{pro}
\label{E7-sufficiency}
Suppose that $\L\in\S_3[\D]$ and \textup{(D4)} hold.
If $(L_1,L_2)$ is not of the form \textup{(E$7_0$)}, then $\L\not\in\T_3$.
\end{pro}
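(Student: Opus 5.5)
The plan is to argue by contraposition: assume $\L\in\S_3[\D]$, (D4) and $\L\in\T_3$, and show that $(L_1,L_2)$ must be of the form (E$7_0$). The argument splits according to whether $\L_0$ is functionally $\D$-vanishing.

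\emph{First step: $\L_0$ is functionally $\D$-vanishing.} If it were not, Proposition \ref{regularity-condition} would give $\L\in\R_3[\D]$. This contradicts $\L\in\S_3[\D]$, since $\D$-regularity of $\L$ means that some $X\in\L$ has $\det\G_\D(X)\ne0$.

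\emph{Second step: $(L_1,L_2)$ has one of the forms (E1)--(E14).} By (D4), $\dim\L_0=2$, so the canonical basis $(L_1,L_2)$ of Lemma \ref{canonical-basis-2dim} exists with some type $J\in\J_2$. I will run through the five classes of types.
\begin{itemize}
\item[(a)] Class (I) cannot occur. By Proposition \ref{classI-2basis}, $\L_0$ would then be functionally nonvanishing, hence not functionally $\D$-vanishing for any $\D$, against the first step.
\item[(b)] For the classes (II)--(V), Propositions \ref{classII-2basis}, \ref{classIII-2basis}, \ref{classIV-2basis}, \ref{classV-2basis} characterize functional $\D$-vanishing exactly. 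So $(L_1,L_2)$ is of the form (En) for some $n\in\N_{14}$.
\end{itemize}

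\emph{Third step: $n=7$.} If $n\ne 7$, Theorem \ref{En-inconsistancy} applies, because $\dim\L=5$ and $\dim\L_0=2$ are exactly its standing assumptions. Together with $\L\in\S_3[\D]$ it gives $\L\notin\T_3$, a contradiction.

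\emph{Fourth step: $xy\ne0$.} With $n=7$, suppose $xy=0$. Then Proposition \ref{Q2Q3-singularity}(b) yields $\L\notin\T_3$, again a contradiction. Hence $xy\ne0$, which is precisely (E$7_0$).

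The argument is just this chaining of earlier results, so I expect no real obstacle. The one delicate point is that several of the exceptional-form characterizations (parts (b), (c) of the propositions and most cases of Theorem \ref{En-inconsistancy}) were only sketched in the paper, and I rely on them as stated. I also need the hypotheses of Theorem \ref{En-inconsistancy} and Proposition \ref{Q2Q3-singularity} to coincide with (D4), and they do.
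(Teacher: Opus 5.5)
Your proposal is correct and is essentially the paper's own argument. The paper chains the same four ingredients, stated directly rather than by contraposition: Proposition \ref{regularity-condition}, which forces $\L_0$ to be functionally $\D$-vanishing; the classification of Section \ref{5dim-transitive} into the forms (E1)--(E14), where your explicit exclusion of class (I) is only implicit in the paper; Theorem \ref{En-inconsistancy} for $n\ne 7$; and Proposition \ref{Q2Q3-singularity}(b) for $xy=0$.
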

We recall that (E$7_0$) stands for the form
\[L_1=\left[\begin{matrix}
1&0&0\\
x&0&0\\
y&0&0\end{matrix}\right] \quad \hbox{and}\quad L_2= \left[\begin{matrix}
0&1&0\\
b_\D(3)x& x&0\\
b_\D(2)y&y&0\end{matrix}\right], \quad \hbox{with}\; xy\not= 0.\]
Thus, we have reduced our investigation to the situation, when (E$7_0$) holds.
The $\D$-singularity has been characterized in Section \ref{E7-singularity} in terms of the extended canonical basis $(L_1,L_2,Q_1,Q_2,Q_3)$ of $\L$.
Namely, we have
\begin{eqnarray*}\L\in\S_3[\D] & \imp & \L_1\in\S_3[\D] \iff \textup 
{(F$7_1$)} \; \hbox{or}\;  \textup{(F$7_1'$)} \;\hbox{holds for}\; Q_1,\\
 & \imp & \L_2\in\S_3[\D] \iff \textup{(F$7_2$)} \; \hbox{holds for}\; Q_2,\\
 &\imp &\L_3\in\S_3[\D] \iff \textup{(F$7_3$)} \; \hbox{holds for}\; Q_3.
\end{eqnarray*}
Then (F7) means that (E$7_0$), (F$7_1$), (F$7_2$), (F$7_3$) hold; similarly (F$7'$) means that (E$7_0$), (F$7_1'$), (F$7_2$), (F$7_3$) hold.
We have concluded in Sections \ref{E7-singularity} and \ref{consistancy} that if (F7) is valid, then
\[\L\in\S_3[\D] \iff \textup{(G7)}\; \hbox{holds} \; \imp \L \; \hbox{is transitive};\]
while if (F$7'$) is valid, then
\[\L\in\S_3[\D] \iff \textup{(G$7'$)}\; \hbox{holds} \;\imp \L \; \hbox{is transitive}.\]
Contrasting these statements with Proposition \ref{E7-sufficiency}, we obtain

\begin{thm}
\label{E7-characterization}
Suppose that $\L\in\S_3[\D]$ and \textup{(D4)} hold.
Then $\L\not\in\T_3$ is true if and only if $(L_1,L_2)$ is not of the form \textup{(E$7_0$)}.
\end{thm}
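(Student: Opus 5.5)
The theorem asserts an equivalence under the standing hypotheses $\L\in\S_3[\D]$ and (D4), so I will prove the two implications separately, relying on the results of Sections \ref{5dim-transitive}--\ref{consistancy}.

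The implication ``if $(L_1,L_2)$ is not of the form (E$7_0$), then $\L\not\in\T_3$'' is exactly Proposition \ref{E7-sufficiency}. For completeness I would recall how it is assembled. If $\L_0$ is not functionally $\D$-vanishing, then $\D$-singularity forces $\L\not\in\T_3$ by Proposition \ref{regularity-condition}. Otherwise Propositions \ref{classI-2basis}--\ref{classV-2basis} show that $(L_1,L_2)$ is of some form (En). For $n\ne7$, Theorem \ref{En-inconsistancy} excludes transitivity. For (E7) with $xy=0$, Proposition \ref{Q2Q3-singularity}(b) does the same.

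For the converse, assume that $(L_1,L_2)$ is of the form (E$7_0$); I must show that $\L\in\T_3$. Since $\L$ is $\D$-singular, so is each of its subspaces $\L_1,\L_2,\L_3$.
\begin{itemize}
\item Proposition \ref{Q2Q3-singularity}(a) then forces $Q_2$ to have the form (F$7_2$) and $Q_3$ to have the form (F$7_3$).
\item Proposition \ref{Q1-singularity}, which applies because $xy\ne0$, forces $Q_1$ to have the form (F$7_1$) or (F$7_1'$).
\end{itemize}
Hence the canonical basis of $\L$ is of the form (F7) or (F$7'$).

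In the case (F7), the necessity part of Theorem \ref{F7-singularity}, applied to the $\D$-singular $\L$, gives conditions (i)--(v). So the basis is of the form (G7), and Proposition \ref{G7-transitivity} yields $\L\in\T_3$. In the case (F$7'$), Theorem \ref{F7comma-singularity} gives (i)--(iv), so the basis is of the form (G$7'$), and Proposition \ref{G7comma-transitivity} yields transitivity. Either way $\L\in\T_3$, which contradicts $\L\not\in\T_3$. This establishes the ``only if'' direction.

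All the work has already been done in the earlier results, so I expect no real obstacle; the proof is pure bookkeeping. The one point needing care is the split between (F$7_1$) and (F$7_1'$). I will note that these two alternatives are distinguished by whether $q_{2,2}^1$ vanishes. Together they exhaust all possibilities, so the case analysis above covers every $\D$-singular $\L$ with basis of the form (E$7_0$).
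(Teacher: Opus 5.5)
Your proposal is the paper's own proof, step for step. One direction is Proposition \ref{E7-sufficiency}; the other is the chain Proposition \ref{Q2Q3-singularity}(a), Proposition \ref{Q1-singularity}, Theorems \ref{F7-singularity} and \ref{F7comma-singularity}, and Propositions \ref{G7-transitivity} and \ref{G7comma-transitivity}. In the branch (F7) it therefore inherits a genuine gap. The necessity half of Theorem \ref{F7-singularity} is false, so ``$\L\in\S_3[\D]$ and (F7) $\Rightarrow$ (G7)'' fails.

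Here is a counterexample. Take any $\L^{(0)}$ of form (G7) and $R=I+dJ^2$ with $d\neq 0$.
\begin{itemize}
\item Since $RJ=JR$, we have $\G_\D(XR)=\G_\D(X)R$, so $\L^{(0)}R\in\S_3[\D]$.
\item As $J^2=E_{3,1}$, right multiplication by $R$ adds $d$ times the third column to the first. It fixes $L_1$, $L_2$ and all third columns, so $\L^{(0)}R$ satisfies (D4) with the same pair (E$7_0$).
\item Its canonical basis is $L_1,L_2$, $E_{1,3}-dxE_{2,1}-dyE_{3,1}$, $Q_2+dE_{2,1}$, $Q_3+dE_{3,1}$, with $Q_2,Q_3$ those of $\L^{(0)}$.
\end{itemize}
This is (F7) with $q_1=-dx\neq 0$, and it violates (ii), (iv) and (v).

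The slip in the paper is in the combination $z_2L_2+w_1Q_1+w_3Q_3$, where the $(1,2)$ entry $z_2$ of $X$ is dropped. The correct coefficient $C$ has an extra term $-b_1b_2b_3q_1$, which gives $q^3_{3,1}=b_\D(2)/b_3-q_1/x$. The paper itself obtains $q^2_{2,1}=b_\D(3)/b_2-q_1/x$ from $D=0$. With both values, the $q_1$-terms cancel in the last combination $z_2L_2+w_2Q_2+w_3Q_3$, and no condition on $q_1$ results.

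The theorem itself survives, and the same device repairs your proof; it also makes Theorem \ref{F7comma-singularity} unnecessary.
\begin{itemize}
\item By Proposition \ref{Q1-singularity}, $Q_1=E_{1,3}+q'E_{2,1}+qE_{2,2}+\frac{y}{x}(q'+b_\D(1)q)E_{3,1}+\frac{y}{x}qE_{3,2}$ for some $q,q'\in\C$. Here (F$7_1$) is the case $q=0$ and (F$7_1'$) the case $q\neq 0$.
\item Put $R=I+cJ+dJ^2$ with $c=-q/x$ and $d=(b_\D(3)q-q')/x$. A direct check gives $Q_1=E_{1,3}R-dL_1-cL_2$, so $E_{1,3}\in\L R^{-1}$.
\item $R^{-1}$ commutes with $J$, fixes $e_3$, and satisfies $L_1R^{-1}=L_1$, $L_2R^{-1}=L_2-cL_1$. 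Hence $\L R^{-1}$ is again $\D$-singular, satisfies (D4) with the same pair (E$7_0$), and has canonical basis of form (F7) with $q_1=0$.
\item For $q_1=0$ the paper's necessity computations are correct, since the lost term is a multiple of $q_1$. So $\L R^{-1}$ is of form (G7) and is transitive by Proposition \ref{G7-transitivity}.
\item Finally, $\L$ is transitive, because $\L\xi=(\L R^{-1})(R\xi)$ and $R\xi\neq 0$ for $\xi\neq0$.
\end{itemize}
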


Now we turn to the operator
\[T=\left[\begin{matrix}
A&C\\
0&B\end{matrix}\right]\in \L(\HH_1\op\HH_2),\]
where $A$ is similar to the unilateral shift $S$ and $\dim\HH_2=\aleph_0$.
Let us consider the similarity model $\wh T$ of $T$ constructed in Section \ref{sim-model} and the cross-sections $\L_{*,r}\; (r\in\N)$ of the commutant of $\wh T$ introduced in Section \ref{cross-sections}.
In view of Proposition \ref{cross-section-transitivity}, we can derive from Proposition \ref{dimension-condition} and Theorem \ref{E7-characterization} the following

\begin{thm}
\label{main-result}
The operator $T$, given above, has a nontrivial hyperinvariant subspace in the following cases:
\begin{itemize}
\item[\textup{(i)}]     \textup{(D1), (D2)} or \textup{(D3)} holds, for some $\L_{*,r}\; (r\in\N)$;
\item[\textup{(ii)}]  \textup{(D4)} holds for every $r\in\N$, and the canonical basis $(L_1,L_2)$ in $\L_{*,r}\cap\M_3$ is not of the form \textup{(E$7_0$)} for some $r\in\N$.
\end{itemize}
More precisely, in these cases there exists $0\ne x\in\HH_1$ such that $\{T\}'x$ is not dense in $\HH_1\op\HH_2$, and so its closure is a proper hyperinvariant subspace of $T$.
\end{thm}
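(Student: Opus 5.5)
The plan is to argue by contraposition. Suppose that condition (H) of Proposition \ref{cyclicity-assumption} holds, i.e.\ $\{T\}'x$ is dense in $\HH_1\op\HH_2$ for every nonzero $x\in\HH_1$. We will show that neither (i) nor (ii) can then occur. Once this is done, under (i) or (ii) there is some $0\ne x\in\HH_1$ with $\{T\}'x$ not dense. The closure $\M$ of $\{T\}'x$ is invariant for every operator in $\{T\}'$, since $\{T\}'$ is an algebra. It contains $x\ne0$ (because $I\in\{T\}'$), and it differs from $\HH_1\op\HH_2$. So $\M$ is a proper hyperinvariant subspace, which is exactly the "more precisely" part of the statement.

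Assume (H). By Proposition \ref{cross-section-transitivity}, every cross-section $\L_{*,r}$ is a transitive subspace of $\L(\E_{3,r})$. Identify $\E_{3,r}$ with $\E_3$ via $e_{r+i-1}\mapsto e_i$. Under this identification $S_r$ becomes $J$ and $\D_r$ becomes $\hbox{diag}(\m_r,\m_{r+1},\m_{r+2})$. This matrix lies in $\DD_3$ because the $\m_k$ are distinct and nonzero. By Proposition \ref{rank2-condition}, $\hbox{rank}(\D_rX-XS_r)\le2$ for every $X\in\L_{*,r}$, so each $\L_{*,r}$ is $\D_r$-singular. Thus for every $r$ we have $\L_{*,r}\in\S_3[\D_r]\cap\T_3$.

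Now rule out the two cases.
\begin{itemize}
\item[] \emph{Case (i).} If (D1), (D2) or (D3) holds for some $r$, then Proposition \ref{dimension-condition}, applied with $\D=\D_r$, gives $\L_{*,r}\not\in\T_3$. This contradicts the transitivity just established.
\item[] \emph{Case (ii).} Here (D4) holds for every $r$, and for some $r$ the canonical basis $(L_1,L_2)$ of $\L_{*,r}\cap\M_3$ is not of the form (E$7_0$) relative to $\D_r$. Theorem \ref{E7-characterization}, applied with $\D=\D_r$, then yields $\L_{*,r}\not\in\T_3$, again a contradiction.
\end{itemize}
Hence (H) fails, and the hyperinvariant subspace is produced as described in the first paragraph.

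The main obstacle is bookkeeping rather than any new estimate. We must check that the transfer from $\E_{3,r}$ to $M_3[\C]$ is compatible with all the definitions used in Sections 5--9: the role of the third basis vector in $\M_3$, the canonical bases, and the form (E$7_0$), which depends on $\D_r$ through $b_{\D_r}(2)$ and $b_{\D_r}(3)$. This compatibility holds because the identification is a unitary change of basis preserving the order of the basis vectors, so it commutes with all these constructions. We must also note that the condition in (ii) "for every $r$" is only used at the particular $r$ where (E$7_0$) fails, and that (D4) at that $r$ is what makes Theorem \ref{E7-characterization} applicable.
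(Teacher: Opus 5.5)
Your proposal is correct and matches the paper's own derivation. Assuming (H), Proposition \ref{cross-section-transitivity} makes every $\L_{*,r}$ transitive and Proposition \ref{rank2-condition} makes it $\D_r$-singular, so Proposition \ref{dimension-condition} (case (i)) and Theorem \ref{E7-characterization} (case (ii)) give a contradiction; you simply spell out what the paper leaves implicit, namely the $\D_r$-singularity, the identification of $\E_{3,r}$ with $\E_3$ (with (E$7_0$) read relative to $\D_r$), and why the closure of $\{T\}'x$ is a proper hyperinvariant subspace.
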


Let us examine also the situation, when the subspace $\HH_2$ above is finite dimensional.

\begin{pro}
\label{H2-finite}
If $\dim\HH_2<\aleph_0$, then $T$ has a nontrivial hyperinvariant subspace.
\end{pro}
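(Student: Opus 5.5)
We treat the cases $\HH_2=\{0\}$ and $0<\dim\HH_2=n<\aleph_0$ separately. In both cases the argument uses the adjoint $T^*$ and the fact that $S^*$ has a rich point spectrum.

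If $\HH_2=\{0\}$, then $T=A$ is similar to $S$. It therefore suffices to exhibit a nontrivial hyperinvariant subspace of $S$, because similarity transports hyperinvariant subspaces. The subspace $\vee\{e_k\}_{k=2}^\i=S\HH$ works, since the range of an operator is invariant for its commutant. Alternatively one may take $\ker S^{*}$, which is hyperinvariant for $S^*$; its orthogonal complement is then hyperinvariant for $S$.

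Now assume $1\le n<\aleph_0$. We look for a nonzero eigenvalue of $T^*$. Consider
\[T^*=\left[\begin{matrix} A^* & 0\\ C^* & B^*\end{matrix}\right]\in\L(\HH_1\op\HH_2).\]
Since $A$ is similar to $S$, the operator $A^*$ is similar to $S^*$. Every $\lambda$ in the open unit disc $\mathbb D$ is an eigenvalue of $S^*$ with one-dimensional eigenspace, so every $\lambda\in\mathbb D$ is an eigenvalue of $A^*$.

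The spectrum $\s(B^*)$ is finite. Choose $\lambda\in\mathbb D\setminus\s(B^*)$ and a nonzero $u\in\ker(A^*-\lambda I)$. Put
\[v:=-(B^*-\lambda I)^{-1}C^*u.\]
Then $(T^*-\lambda I)(u\op v)=0$, so $\lambda$ is an eigenvalue of $T^*$. The eigenspace $\ker(T^*-\lambda I)$ is hyperinvariant for $T^*$, because kernels of $T^*-\lambda I$ are invariant for $\{T^*\}'$. Hence its orthogonal complement
\[\M=\left(\ker(T^*-\lambda I)\right)^\perp=\ol{(T-\lambda I)\HH}\]
is hyperinvariant for $T$.

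It remains to check that $\M$ is nontrivial. $\M\ne\HH$ because $\ker(T^*-\lambda I)\ne\{0\}$. For $\M\ne\{0\}$ we need $T\ne\lambda I$. This holds because $A=T|_{\HH_1}$ is similar to $S$ and hence is not scalar.

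The main obstacle is the choice of $\lambda$ so that the triangular system for the eigenvector is solvable. It is settled by the finite dimensionality of $\HH_2$: $\s(B^*)$ is finite and therefore cannot exhaust the disc $\mathbb D$ of eigenvalues of $A^*$.
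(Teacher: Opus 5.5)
Your proof is correct. In the case $0<\dim\HH_2<\aleph_0$, however, it takes a different route from the paper's.

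The paper notes that $\HH_2=\HH_1^\perp$ is invariant for $T^*$ with $T^*|_{\HH_2}=B^*$. Since $\HH_2$ is finite dimensional and nonzero, $B^*$ has an eigenvector $v$, and then $0\op v$ is an eigenvector of $T^*$. Hence $\s_p(T^*)\supset\s_p(B^*)\ne\emptyset$, with no triangular system to solve.

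You instead lift eigenvalues of $A^*$, using $\s_p(A^*)=\s_p(S^*)=\mathbb{D}$ and the invertibility of $B^*-\lambda I$ for $\lambda\notin\s(B^*)$. The ``main obstacle'' you single out is therefore avoidable. On the other hand, your argument proves more than the statement. It only needs $\mathbb{D}\not\subset\s(B)$, not $\dim\HH_2<\aleph_0$, so it gives a nontrivial hyperinvariant subspace whenever $\s(B)$ fails to cover the open unit disc. The paper's argument, conversely, does not use the shift structure of $A$ at all, and works whenever $B^*$ has an eigenvalue.

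For $\HH_2=\{0\}$, your subspace $S\HH$ is the paper's $(\ker S^{*n})^\perp$ with $n=1$. Your justification (the range of an operator is invariant for its commutant) is more direct than the paper's remark about $\mathrm{Hlat}\,S=\mathrm{Lat}\,S$.

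One small slip: $(\ker(T^*-\lambda I))^\perp=\ol{(T-\bar\lambda I)\HH}$, with a conjugate, so nontriviality needs $T\ne\bar\lambda I$. This is harmless, since $T$ is not scalar.
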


\noindent
{\bf Proof.}
If $0<\dim\HH_2<\aleph_0$, then $\s_p(T^*)\supset\s_p(B^*)\ne \emptyset$.
For every $\m\in\s_p(T^*)$, the subspace $(\ker(T^*-\m I))^\perp$ is a nontrivial hyperinvariant subspace of $T$.

If $\dim\HH_2=0$, then $\hbox{Hlat}\, T$ is isomorphic to $\hbox{Hlat}\, S$.
Furthermore, for any $n\in\N, \; (\ker S^{*n})^\perp$ is a nontrivial hyperinvariant subspace of $S$.
(Actually, every operator in $\{S\}'$ can be approximated by the polynomials of $S$ in the weak operator topology; see, e.g., Theorem 12 in \cite{KSz}.
Hence $\hbox{Hlat}S$ coincides with $\hbox{Lat}S$, which was completely characterized by Beurling.)

\rightline{$\square$}

There are no funding in connection with this paper.

\bigskip

L\'aszl\'o K\'erchy

Bolyai Institute, University of Szeged

Aradi v\'ertan\'uk tere 1

Szeged 6720

Hungary

e-mail: kerchy@math.u-szeged.hu

\end{document}